\titleclass{\part}{top}
\titleformat{\part}[display]
{\huge\bfseries\centering}{\partname~\thepart}{0pt}{}
\titlespacing*{\part}{0pt}{160pt}{40pt}
\colorlet{red}{black}
\colorlet{blue}{black}
\colorlet{purple}{black}
\newcommand{\R}{\mathbb{R}}
\newcommand{\N}{\mathbb{N}}
\newcommand{\E}{\mathbb{E}}
\renewcommand{\epsilon}{\varepsilon}
\newcommand{\1}{\mathds{1}}
\newcommand{\mathbbm}[1]{\1}
\title{Approximation of martingale couplings on the line in the adapted weak topology}
\newcommand{\specificthanks}[1]{\@fnsymbol{#1}}
\author{
	M. Beiglböck\footnote{University of Vienna, Austria. Email: \href{mailto:mathias.beiglboeck@univie.ac.at}{\texttt{mathias.beiglboeck@univie.ac.at}}}\thanks{acknowledges support from the Austrian Science Fund (FWF) through grant number Y00782.} 
	\and 
	B. Jourdain\footnote{CERMICS, Ecole des Ponts, INRIA, Marne-la-Vallée, France. 
	E-mails: \href{mailto:benjamin.jourdain@enpc.fr}{\texttt{benjamin.jourdain@enpc.fr}}, \href{mailto:william.margheriti@enpc.fr}{\texttt{william.margheriti@enpc.fr}}} 
	\and
	W. Margheriti\textsuperscript{\specificthanks{2},~}\thanks{acknowledges support from the \textquotedblleft Chaire Risques Financiers\textquotedblright , Fondation du Risque.}
	\and
	G. Pammer\footnote{ETH Zürich, Switzerland. Email: \href{mailto:gudmund.pammer@math.ethz.ch}{\texttt{gudmund.pammer@math.ethz.ch}}}\thanks{acknowledges support from the Austrian Science Fund (FWF) through grant number W1245.}
}
\date{\today}
\numberwithin{equation}{section}
\theoremstyle{plain}
\newtheorem{prooff}{Proof}[section]
\newtheorem{lemma}[prooff]{Lemma}
\newtheorem{theorem}[prooff]{Theorem}
\newtheorem{proposition}[prooff]{Proposition}
\theoremstyle{definition}
\newtheorem{example}[prooff]{Example}
\newtheorem{remark}[prooff]{Remark}
\theoremstyle{definition}
\theoremstyle{plain}
\newtheorem{fact}[prooff]{Fact}
\newcommand\x{0.5}
\date{}
\begin{document}

\maketitle

\begin{abstract} 

 Our main result is to establish stability of martingale couplings: suppose that $\pi$ is a martingale coupling with marginals $\mu, \nu$.
 Then, given approximating marginal measures $\tilde \mu \approx \mu, \tilde \nu\approx \nu$  in convex order,  we show that there exists an approximating martingale coupling $\tilde\pi \approx \pi$ with   marginals $\tilde \mu, \tilde \nu$.
 
In mathematical finance, prices of European call / put option yield information on the marginal measures of the arbitrage free pricing measures. The above result asserts that small variations of call / put prices lead only to  small variations on the level of arbitrage free pricing measures.

While these facts have been anticipated for some time, the actual proof requires somewhat intricate stability results for the adapted Wasserstein distance.  
Notably the result has consequences for several related problems. Specifically, it is relevant for numerical approximations, it leads to a new proof of the monotonicity principle of martingale optimal transport and it implies stability of weak martingale optimal transport as well as optimal Skorokhod embedding. On the mathematical finance side this yields continuity of the robust pricing problem for exotic options and VIX options with respect to market data. These applications will be detailed in two companion papers. 
\end{abstract}

{\bf Keywords:} Martingale optimal transport, adapted Wasserstein distance, stability.

\section{Introduction}


Before carefully explaining all required notation and describing relevant literature, let us give a first description of our main result and  its relevance for the martingale transport theory. 

While classical transport theory is concerned with the set $\Pi(\mu, \nu)$ of \emph{couplings} or \emph{transport plans}  of probability measures $\mu,\nu$, the martingale variant restricts the problem to the set  $\Pi_M(\mu, \nu)$ of \emph{martingale couplings}, that is, transport plans which preserve the barycenter of each particle. Even though the main interest lies in the case where $\mu, \nu$ are probabilities on the real line, many of the basic arguments and results appear significantly more involved in the martingale context. A basic explanation lies in the rigidity of the martingale condition that makes classically  simple  approximation results quite intricate. Specifically, the martingale theory has been missing a counterpart to the following straightforward fact of the classical transport theory:
\begin{fact}[Stability of couplings]\label{RoleModel}
Let $\pi\in \Pi(\mu, \nu)$ and assume that $\mu^k,\nu^k$, $k\in\N$, are probabilities that converge weakly to $\mu$ and $\nu$. Then there exist couplings $\pi^k\in\Pi(\mu^k,\nu^k), k\in \N$ converging weakly to $\pi$.
\end{fact}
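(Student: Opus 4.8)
The plan is to relate the prescribed marginals to the approximating ones through couplings concentrated near the diagonal, and then to glue these onto $\pi$. The key preliminary step is this: whenever $\mu^k\to\mu$ weakly on $\R$, there exist couplings $\gamma^k\in\Pi(\mu^k,\mu)$ converging weakly to the diagonal coupling $\Delta_\mu:=(\id,\id)_*\mu$. This is essentially a reformulation of the Skorokhod representation theorem on the Polish space $\R$: realise $X^k\sim\mu^k$ and $X\sim\mu$ on a common probability space with $X^k\to X$ almost surely, and set $\gamma^k:=\operatorname{law}(X^k,X)$; then $(X^k,X)\to(X,X)$ a.s.\ gives $\gamma^k\to\Delta_\mu$. (On the line one may instead take $X^k=F_{\mu^k}^{-1}(Z)$ and $X=F_\mu^{-1}(Z)$ for a single uniform $Z$, since generalised quantile functions converge Lebesgue-a.e.\ under weak convergence.) In the same way choose $\delta^k\in\Pi(\nu,\nu^k)$ with $\delta^k\to\Delta_\nu$.

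Next, glue. Applying the gluing lemma twice to $\gamma^k$, $\pi$ and $\delta^k$ along the shared marginals $\mu$ and $\nu$ produces $\theta^k\in\mathcal P(\R^4)$, with coordinates $(x_1,x_2,y_1,y_2)$, such that $(\proj_{x_1,x_2})_*\theta^k=\gamma^k$, $(\proj_{x_2,y_1})_*\theta^k=\pi$ and $(\proj_{y_1,y_2})_*\theta^k=\delta^k$; then $\pi^k:=(\proj_{x_1,y_2})_*\theta^k$ lies in $\Pi(\mu^k,\nu^k)$ by construction. To obtain $\pi^k\to\pi$, observe that the one-dimensional marginals of $\theta^k$ are $\mu^k,\mu,\nu,\nu^k$, each a weakly convergent and hence tight sequence, so $(\theta^k)_k$ is tight; let $\theta$ be a weak limit along some subsequence. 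Continuity of the projections yields $(\proj_{x_1,x_2})_*\theta=\Delta_\mu$, $(\proj_{x_2,y_1})_*\theta=\pi$ and $(\proj_{y_1,y_2})_*\theta=\Delta_\nu$, and the first and third identities force $x_1=x_2$ and $y_1=y_2$ $\theta$-almost surely. Hence $(\proj_{x_1,y_2})_*\theta=(\proj_{x_2,y_1})_*\theta=\pi$, so $\pi^k\to\pi$ along that subsequence; since the subsequence was arbitrary, the whole sequence $(\pi^k)_k$ converges weakly to $\pi$.

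The only step that is more than bookkeeping is the first one, the construction of couplings between $\mu^k$ and $\mu$ collapsing onto the diagonal. This is where the linear (more generally, Polish) structure of the state space is genuinely used — it amounts to the Skorokhod representation theorem — and it is precisely the ingredient that has no direct martingale analogue: a near-diagonal coupling of $\mu^k$ with $\mu$ carries no information linking $\nu^k$ to $\nu$ in a way compatible with the martingale constraint, which is the source of the difficulty for the martingale version announced in the abstract. The remaining ingredients — the gluing lemma, tightness via marginals, and the subsequence principle — are routine and require no special structure.
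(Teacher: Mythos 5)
Your proof is correct, and the construction of $\pi^k$ is essentially the one the paper indicates: glue a coupling of $\mu^k$ with $\mu$ onto $\pi$ and then onto a coupling of $\nu$ with $\nu^k$ (compare the paper's formula $\pi^k(dx^k,dy^k)=\int\eta^k(dx^k,dx)\,\pi_x(dy)\,\tau^k_y(dy^k)$ in Section~\ref{sec:main results}). The only real difference is how convergence is verified: the paper takes $\eta^k,\tau^k$ to be $\mathcal W_r$-optimal and reads off the quantitative estimate \eqref{eq:rate} directly, whereas you take near-diagonal couplings from the Skorokhod representation and conclude by tightness plus identification of subsequential limits. Your variant is well suited to the Fact as literally stated (mere weak convergence, no moment hypotheses), while the paper's choice additionally buys the rate $\mathcal W_r^r(\pi,\pi^k)\le\mathcal W_r^r(\mu,\mu^k)+\mathcal W_r^r(\nu,\nu^k)$ in the Wasserstein setting; both arguments are sound, and your closing remark correctly identifies the diagonal-coupling step as the ingredient with no martingale analogue.
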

This result is so  basic and  straightforward that its implicit use is easily overlooked. Note however that it plays a crucial role in a number of occasions, e.g.\ for stability of optimal transport,  providing numerical approximations, or in the characterisation of optimality through cyclical monotonicity. 

The main result of this article is to establish Fact \ref{RoleModel} for martingale transports on the real line, see Theorem \ref{thm:1} below. This closes a gap in the theory of martingale transport and yields basic fundamental results in a unified fashion that is much closer to the classical theory.  It allows to address questions in martingale optimal transport, optimal Skorokhod embedding and robust finance that have previously remained open. These applications are considered systematically in two accompanying articles, \textcolor{blue}{see \cite{BeJoMaPa21b} for the first of the two.}
\textcolor{purple}{Among other results, we establish therein the stability of the superreplication bound for VIX futures as well as the stability of the stretched Brownian motion.
Moreover, we derive sufficiency of a monotonicity principle, in the spirit of cyclical monotonicity of classical optimal transport, for the weak martingale optimal transport problem and are able to generalize the results concerning the corresponding notion of monotonicity in martingale optimal transport. }

We note that 
while virtually all (to the best of our knowledge) applications of martingale optimal transport are concerned with the case where $\mu, \nu$ are supported on $\R$, it is a highly intriguing challenge to extend the martingale transport theory to the case where $\mu, \nu$ are supported on $\R^d, d> 1$. 
In a remarkable contrast to our main result, stability of martingale optimal transport breaks down in higher dimensions as has been recently established by Br\"uckerhoff and Juillet \cite{BrJu21}.

\subsection{The Martingale Optimal Transport problem} Let $(X,d_X)$, $(Y,d_Y)$ be Polish spaces and $C:X\times Y\to\R_+$ be a nonnegative measurable function. Denote by $\mathcal P(X)$ the set of probability measures on $X$. For $\mu\in\mathcal P(X)$ and $\nu\in\mathcal P(Y)$, the classical Optimal Transport problem consists in minimising
\begin{equation}\label{OT2}
\tag{OT}
\inf_{\pi\in\Pi(\mu,\nu)}\int_{X\times Y}C(x,y)\,\pi(dx,dy),
\end{equation}
where $\Pi(\mu,\nu)$ denotes the set of probability measures in $\mathcal P(X\times Y)$ with the first marginal $\mu$ and the second marginal $\nu$. When $X=Y$ and $C=d_X^r$ for some $r\ge1$, \eqref{OT2} corresponds to the well-known Wasserstein distance with index $r$ to the power $r$, denoted $\mathcal W_r^r(\mu,\nu)$, see \cite{AmGi13,Sa15,Vi1,Vi09} for a study in depth.

The theory of OT goes back \textcolor{red}{to Monge \cite{Mo81} in its original formulation and Kantorovich \cite{Ka42} in its modern formulation}. It was rediscovered many times under various forms and has an impressive scope of applications. A variant of OT that is motivated by applications in mathematical finance, in particular in model-independent pricing, was introduced in \cite{BeHePe12} in a discrete time setting and in \cite{GaHeTo13} in a continuous time setting. Compared to the usual OT,  the difference is that one requires an additional martingale constraint to \eqref{OT2} which reflects the condition for a financial market to be free of arbitrage.

In detail, the Martingale Optimal Transport (MOT) problem is formulated as follows: given $\pi\in\mathcal P(\R\times \R)$, we denote by $(\pi_x)_{x\in X}$ \textcolor{blue}{a regular conditional disintegration} with respect to its first marginal $\mu$. We then write $\pi(dx,dy)=\mu(dx)\,\pi_x(dy)$, or with a slight abuse of notation, $\pi=\mu\times\pi_x$ if the context is not ambiguous. Let $C:\R\times\R\to\R_+$ be a nonnegative measurable function and $\mu$, $\nu$ be two probability distributions on the real line with finite first moment. Then the MOT problem consists in minimising
\begin{equation}\label{MOT2}
\tag{MOT}
\inf_{\pi\in\Pi_M(\mu,\nu)}\int_{\R\times\R}C(x,y)\,\pi(dx,dy),
\end{equation}
where $\Pi_M(\mu,\nu)$ denotes the set of martingale couplings between $\mu$ and $\nu$, that is
\[
\Pi_M(\mu,\nu)=\left\{\pi=\mu\times\pi_x\in\Pi(\mu,\nu)\mid\mu(dx)\text{-almost everywhere},\ \int_\R y\,\pi_x(dy)=x\right\}.
\]
According to Strassen's theorem \cite{St65}, the existence of a martingale coupling between two probability measures $\mu,\nu\in\mathcal P(\R)$ with finite first moment is equivalent to $\mu\le_c\nu$, where $\le_c$ denotes the convex order. We recall that two finite positive measures $\mu,\nu$ on $\R$ with finite first moment and are said to be in the convex order \textcolor{red}{if and only if}
 we have
\[
\int_{\R}f(x)\,\mu(dx)\le\int_{\R}f(y)\,\nu(dy),
\]
for every convex function \textcolor{red}{$f:\R\to(-\infty, \infty]$.}
Note that there holds equality for all affine functions, from which we deduce that $\mu$ and $\nu$ have equal masses and satisfy $\int_{\R} x\,\mu(dx)=\int_{\R} y\,\nu(dy)$.

For adaptations of classical
optimal transport theory to the MOT problem, we refer to  \cite{HoNe12, HeTato, HeTo13}. Concerning duality results, we
refer to \cite{DoSo12, BeNuTo16, De18, ChKiPrSo20}. We also refer to \cite{ObSi17, De18b, DeTo17, GhKiLi19} for the multi-dimensional case and to \cite{BeCoHu, BeNuSt19} for connections to Skorokhod embedding problem.

Concerning the numerical resolution of the MOT problem, we refer to the articles \cite{AlCoJo17a, AlCoJo17b, De18c, GuOb, He19}. When $\mu$ and $\nu$ are finitely supported, then the MOT problem amounts to linear programming. In the general case, once the MOT problem is discretised by approximating $\mu$ and $\nu$ by probability measures with finite support and
in the convex order, Alfonsi, Corbetta and Jourdain  \cite{AlCoJo17b} raised the question of the convergence of optimal costs of the discretised problem towards the
costs of the original problem. 
A first partial result was obtained by Juillet \cite{Ju14b} who established stability of left-curtain coupling.
Guo and Obłój \cite{GuOb} establish the result under moment conditions.  More recently,  \cite{BaPa19, Wi20} independently gave a definite positive answer. 

\subsection{The Adapted Wasserstein distance} The stability result shown in \cite{BaPa19} involves Wasserstein convergence. More precisely, let $\mu^k,\nu^k\in\mathcal P(\R)$, $k\in\N$ be in the convex order and respectively converge to $\mu$ and $\nu$ in $\mathcal W_r$. Under mild assumption, for all $k\in\N$ there exists $\pi^k\in\Pi_M(\mu^k,\nu^k)$, optimal for \eqref{MOT2}, and any accumulation point of $(\pi^k)_{k\in\N}$ with respect to the $\mathcal W_r$-convergence is a martingale coupling between $\mu$ and $\nu$ optimal for \eqref{MOT2}.

However, it turns out that the usual weak topology / Wasserstein distance is not well suited  in setups where accumulation of information plays a distinct role, e.g.\ in mathematical finance. Indeed, the symmetry of this distance does not take into account the temporal structure of stochastic processes. It is easy to convince oneself that two stochastic processes very close in Wasserstein distance can yield radically unalike information, as illustrated in  \cite[Figure 1]{BaBaBeEd19a}. Therefore, one needs to strengthen, the usual topology of weak convergence accordingly. Over time numerous researchers have independently introduced refinements of the weak topology, we mention Hellwig's information topology \cite{He96}, Aldous's extended weak topology \cite{Al81}, the nested distance / adapted Wasserstein distance of Plug-Pichler \cite{PfPi12} and the optimal stopping topology \cite{BaBaBeEd19b}. Strikingly, all those seemingly different definitions lead to same topology in the present discrete time \cite[Theorem 1.1]{BaBaBeEd19b} framework. We refer to this topology as the \emph{adapted weak topology}. 
A natural compatible metric is given by the \emph{adapted Wasserstein distance}, see \cite{PfPi12,PfPi14,PfPi15,PfPi16,Las18,BiTa18} among others. 

Fix $x_0\in X$ and $r\ge1$. We denote the set of all probability measures on $X$ with finite $r$-th moment by $\mathcal P_r(X)$, i.e.
\[
\mathcal P_r(X)=\left\{p\in\mathcal P(X)\mid\int_Xd_X^r(x,x_0)\,p(dx)<+\infty\right\}.
\]

Let $\mathcal M(X)$ (resp.\ $\mathcal M_r(X)$) denote the set of all finite positive measures (resp.\ with finite $r$-th moment). The sets $\mathcal M(X)$ and $\mathcal M_r(X)$, resp.\ are equipped with the weak topology induced by the set $C_b(X)$ of all real-valued absolutely bounded continuous functions on $X$ and, resp.,\ the set $\Phi_r(X)$ of all real-valued continuous functions on $X$, $C(X)$, which satisfy the growth constraint
\[
\Phi_r(X)=\left\{f\in C(X)\mid\exists\alpha>0,\ \forall x\in X,\ \vert f(x)\vert\le\alpha\left(1+d_X^r(x,x_0)\right)\right\}.
\]
 A sequence $(\mu^k)_{k\in\N}$ converges in $\mathcal M_r(X)$ to $\mu$ \textcolor{red}{if and only if}

\begin{equation}\label{eq:convergencePr}
\forall f\in\Phi_r(X),\quad \mu^k(f) \underset{k\to+\infty}{\longrightarrow} \mu(f).
\end{equation}
If moreover $\mu$ and $\mu^k$, $k\in\N$, have equal masses, then the convergence \eqref{eq:convergencePr} can be equivalently formulated \textcolor{blue}{(see for instance \cite[Theorem 6.9]{Vi09})} in terms of the Wasserstein distance with index $r$:
\[
\mathcal W_r(\mu^k,\mu) :=\inf_{\pi \in \Pi(\mu^k,\mu)} \left(\int_{X\times X} d_X^r(x,y)\,\pi(dx,dy)\right)^{\frac1r}\underset{k\to+\infty}{\longrightarrow}0.
\]
Given $m_0>0$, we can then equip the set of finite positive measures in $\mathcal M_r(X\times Y)$ with mass $m_0$ with the Wasserstein topology. However, we can also equip it with a stronger topology, namely the adapted Wasserstein topology. It is induced by the metric $\mathcal{AW}_r$ defined for all $\pi,\pi'\in\mathcal M_r(X\times Y)$ such that $\pi(X\times Y)=\pi'(X\times Y)=m_0$ by
\begin{equation}\label{eq:defAW2}
\mathcal{AW}_r(\pi,\pi') =\inf_{\chi \in \Pi(\mu,\mu')} \left(\int_{X\times X} \left(d_X^r(x,x') + \mathcal W_r^r(\pi_{x},\pi'_{x'})\right)\, \chi(dx,dx')\right)^{\frac1r},
\end{equation}
where $\mu$, resp.\ $\mu'$, is the first marginal of $\pi$, resp.\ $\pi'$. It is easy to check that $\mathcal W_r \leq \mathcal{AW}_r$, and therefore $\mathcal{AW}_r$ indeed induces a stronger topology than $\mathcal W_r$. Another useful point of view is the following: let $J:\mathcal M(X\times Y)\to\mathcal M(X\times\mathcal P(Y))$ be the inclusion map defined for all $\pi=\mu\times\pi_x\in\mathcal M(X\times Y)$ by
\[
J(\pi)(dx,dp)=\mu(dx)\,\delta_{\pi_x}(dp).
\]
For all $\pi,\pi'\in\mathcal M_r(X\times Y)$ with equal masses, their adapted Wasserstein distance coincides with
\begin{align}\label{eq:AW=W}
\mathcal{AW}_r(\pi,\pi') = \mathcal W_r(J(\pi),J(\pi')).
\end{align}
It follows that the topology induced by $\mathcal{AW}_r$ \textcolor{blue}{coincides with the weak topology induced by $J$}.

Finally, let us mention the interpretation of the adapted Wasserstein distance in terms of bicausal couplings (cf.\ \cite{BaBaBeWi}). Let $\pi,\pi'\in\mathcal P_r(X\times Y)$.  Let $Z_1,Z_2,Z'_1,Z'_2$ be random variables such that the distribution of $(Z_1,Z_2,Z'_1,Z'_2)$ is a $\mathcal W_r$-optimal coupling between $\pi$ and $\pi'$. In many cases, there exists a Monge transport map $T:X\times Y\to X\times Y$ such that $(Z'_1,Z'_2)=T(Z_1,Z_2)$. As mentioned in \cite{BaBaBeEd19a}, the temporal structure of stochastic processes is then not taken into account since the present value $Z'_1$ is determined from the future value $Z_2$. Therefore, it is more suitable to restrict to couplings $(Z_1,Z_2,Z'_1,Z'_2)$ between $\pi$ and $\pi'$ such that the conditional distribution of $Z'_1$ (resp.\ $Z_1$) given $(Z_1,Z_2)$ (resp.\ $(Z'_1,Z'_2)$) is equal to the conditional distribution of $Z'_1$ (resp.\ $Z_1$) given $Z_1$ (resp.\ $Z'_1$).

Let $\mu$ and $\mu'$ denote the respective first marginal distributions of $\pi$ and $\pi'$ and let $\eta\in\Pi(\pi,\pi')$ be a coupling between $\pi$ and $\pi'$. Let $\chi(dx,dx')=\int_{(y,y')\in Y\times Y}\eta(dx,dy,dx',dy')\in\Pi(\mu,\mu')$. W
e write $\chi(dx,dx')=\mu(dx)\,\chi_x(dx')=\mu'(dx')\,\overleftarrow{\chi}_{x'}(dx)$. Then $\eta$ is called bicausal \textcolor{red}{if and only if}

\[
\int_{y'\in Y}\eta(dx,dy,dx',dy')=\pi(dx,dy)\,\chi_x(dx')\quad\textrm{and}\quad\int_{y\in Y}\eta(dx,dy,dx',dy')=\pi'(dx',dy')\,\overleftarrow{\chi}_{x'}(dx).
\]
We denote by $\Pi_{bc}(\pi,\pi')$ the set of bicausal couplings between $\pi$ and $\pi'$. Let $(\gamma_{(x,x')}(dy,dy'))_{(x,x')\in X\times X}$ be a probability kernel such that $\eta(dx,dy,dx',dy')=\chi(dx,dx')\,\gamma_{(x,x')}(dy,dy')$. Another useful characterisation is that $\eta$ is bicausal \textcolor{red}{if and only if}
 $\chi(dx,dx')\text{-almost everywhere}$, $\gamma_{(x,x')}(dy,dy')\in\Pi(\pi_x,\pi_{x'})$. Then the adapted Wasserstein distance coincides with
\[
\mathcal{AW}_r(\pi,\pi')=\inf_{\eta\in\Pi_{bc}(\pi,\pi')}\left(\int_{X\times Y}\left(d_X^r(x,x')+d_Y^r(y,y')\right)\,\eta(dx,dy,dx',dy')\right)^{\frac1r}.
\]
One of the objectives of the present paper is to prove that well-known stability results for the $\mathcal W_r$-convergence also hold for the $\mathcal{AW}_r$-convergence. More details are given in Section \ref{sec:main results}.


\subsection{Outline} Section \ref{sec:main results} presents the main result of this article, Theorem \ref{thm:1}. We also provide a discussion of the result and give a sketch of its proof in order to help seeing through the technical details provided later on.

In
Section \ref{sec:adapted weak topology} we provide certain technical lemmas which allow us to deal with difficulties specific to the adapted Wasserstein distance with more ease. They mainly explore properties of approximations and when addition (in a sense explained below) is continuous.

Section \ref{sec: convex order} focuses on the convex order. It deals with potential functions which are a convenient tool to address the convex order in dimension one. 

Section \ref{sec:proof main thm} is  devoted to the proof of the main theorem. Before entering into actual argument, we establish that is is enough to prove $\mathcal{AW}_1$-convergence for irreducible pairs of marginals.

 \section{Main result}
    \label{sec:main results}

    Our main result is Theorem \ref{thm:1} below. Before stating it, we give a proposition which enlightens us why the conclusion of the theorem should  be the least hoped for. We also state a generalisation of this proposition to Polish spaces. Then, we state a proposition which is a key result to argue that the theorem needs only to be proved when the limit pair is irreducible. Next, we state the theorem together with a sketch of its proof. It is understood that $(X,d_X)$ and $(Y,d_Y)$ denote arbitrary Polish spaces  and that $(x_0,y_0)$ is a fixed element of $X\times Y$.

As already mentioned above,    it is well-known (and easy to show) that when one considers convergent sequences of marginals $(\mu^k)_{k\in\N}$, $(\nu^k)_{k\in\N}$  (with equal masses)  to $\mu,\nu \in \mathcal M_r(X)$, then, informally speaking, we have\footnote{\textcolor{red}{Note that this can  be made precise in terms of hemicontinuity. We also refer to \cite{neufeld2021stability}.}}
    \begin{align}\label{eq:informal convergence}
    \Pi(\mu^k,\nu^k) \underset{k\to+\infty}{\longrightarrow} \Pi(\mu,\nu)\quad \text{in }\mathcal W_r,
    \end{align}
	i.e., any sequence with convergent marginals has accumulation points in $\Pi(\mu,\nu)$, and for any $\pi \in \Pi(\mu,\nu)$ it  holds
    \begin{align}\label{eq:rate}
    \inf_{\pi^k\in\Pi(\mu^k,\nu^k)}\mathcal W_r^r(\pi,\pi^k)\leq \mathcal W_r^r(\mu,\mu^k) + \mathcal W_r^r(\nu,\nu^k)\underset{k\to+\infty}\longrightarrow0.
    \end{align}

    \textcolor{blue}{Indeed, if $\eta^k\in\Pi(\mu^k,\mu)$, resp.\ $\tau^k\in\Pi(\nu,\nu^k)$ is optimal for $\mathcal W_r(\mu^k,\mu)$, resp.\ $\mathcal W_r(\nu,\nu^k)$, then the measure $\eta^k(dx^k,dx)\,\pi_x(dy)\,\tau^k_y(dy^k)$ is a coupling between $\pi(dx,dy)$ and $\int_{(x,y)\in X\times Y}\eta^k(dx^k,dx)\,\pi_x(dy)\,\tau^k_y(dy^k)$
    which belongs to $\Pi(\mu^k,\nu^k)$ and
  \begin{align*}
   \inf_{\pi^k\in\Pi(\mu^k,\nu^k)}\mathcal W_r^r(\pi,\pi^k)&\leq \int_{X\times X\times Y\times Y}\left(d_X^r(x^k,x)+d_Y^r(y,y^k)\right)\eta^k(dx^k,dx)\,\pi_x(dy)\,\tau^k_y(dy^k)\\&=\mathcal W^r_r(\mu,\mu^k)+\mathcal W^r_r(\nu,\nu^k).
  \end{align*}}
    The next two propositions establish \eqref{eq:informal convergence} with respect to $\mathcal{AW}_r$ for finite positive measures with common mass. The first one is formulated for $X=Y=\R$ and provides under mild assumptions an estimate of $\inf_{\pi^k\in\Pi(\mu^k,\nu^k)}\mathcal{AW}_r^r(\pi,\pi^k)$ with respect to the marginals as in \eqref{eq:rate}. Its proof relies on unidimensional tools, which we recall here. For $\eta$ a probability distribution on $\R$, we denote by $F_\eta:x\mapsto\eta((-\infty,x])$ its cumulative distribution function, and by $F_\eta^{-1}:(0,1)\to\R$ its quantile function defined for all $u\in(0,1)$ by
    \[
    F_\eta^{-1}(u)=\inf\{x\in\R\mid F_\eta(x)\ge u\}.
    \]
    The following properties are standard results (see for instance \cite[Section 6]{JoMa18} for proofs):
\begin{enumerate}[label=(\alph*)]
	\item\label{it:Fcadlag3} $F_\eta$ is c\`adl\`ag \textcolor{red}{i.e.\ right-continuous with left-hand limits}, $F_\eta^{-1}$ is c\`agl\`ad \textcolor{red}{i.e.\ left-continuous with right-hand limits};
	\item\label{it:inequality equivalence quantile function3} For all $(x,u)\in\R\times(0,1)$,
	\begin{equation}\label{eq:equivalence quantile cdf3}
	F_\eta^{-1}(u)\le x\iff u\le F_\eta(x),
	\end{equation}
	which implies, \textcolor{red}{using the notation $F_\eta(y-)$ for the left-hand limit of $F_\eta$ at $y\in\R$},
	\begin{align}\label{eq:jumps F3}
	&F_\eta(x-)<u\le F_\eta(x)\implies x=F_\eta^{-1}(u),\\
	\label{eq:jumps F23}
	\textrm{and}\quad&F_\eta(F_\eta^{-1}(u)-)\le u\le F_\eta(F_\eta^{-1}(u));
	\end{align}
	\item\label{it:Fminus of F3} For $\eta(dx)$-almost every $x\in\R$,
	\begin{equation} \label{eq:Fminus of F3}
	0<F_\eta(x),\quad F_\eta(x-)<1\quad\text{and}\quad F_\eta^{-1}(F_\eta(x))=x;
	\end{equation}
	\item\label{it:inverse transform sampling3} The image of the Lebesgue measure on $(0,1)$ by $F_\eta^{-1}$ is $\eta$.
\end{enumerate}
The property \ref{it:inverse transform sampling3} is referred to as the inverse transform sampling.

    \begin{proposition}\label{prop:adapted approximation dim1}
    	Let $\mu,\mu^k,\nu,\nu^k\in\mathcal M_r(\R)$, $k\in\N$, be measures of equal masses such that $\mu^k$ (resp.\ $\nu^k)$ converges to $\mu$ (resp.\ $\nu$) in $\mathcal W_r$.
    	Let $\pi\in\Pi(\mu,\nu)$.
    	Then:
    	\begin{enumerate}[label = (\alph*)]
    		\item \label{it:adapted approximation dim11} There exists a sequence $\pi^k\in \Pi(\mu^k,\nu^k)$, $k\in\N,$ converging to $\pi$ in $\mathcal{AW}_r$;
    		\item \label{it:adapted approximation dim12} If for all $x \in \R$ and $k\in\N$ with $\mu^k (\{ x \}) > 0$, there exists $x' \in \R$ such that
    		\[
    		\mu((-\infty,x'))\le\mu^k((-\infty,x))<\mu^k((-\infty,x])\le\mu((-\infty,x']) 
    		\]
    		(which is for instance always satisfied for $\mu^k$  non-atomic) then
    		\begin{equation}\label{eq:estimate Awr}
    		\mathcal{AW}_r^r (\pi,\pi^k) \le \mathcal W_r^r (\mu,\mu^k) + \mathcal W_r^r (\nu,\nu^k).
    		\end{equation}
    	\end{enumerate}
    \end{proposition}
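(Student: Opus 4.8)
The plan is to construct the approximating couplings $\pi^k$ explicitly using the one-dimensional quantile machinery, in the spirit of the classical construction sketched just before the proposition, but carefully so that the \emph{adapted} cost is controlled. The key point is that in the classical argument one glues $\eta^k \in \Pi(\mu^k,\mu)$ (optimal for $\mathcal W_r(\mu^k,\mu)$), the disintegration $\pi_x$, and $\tau^k \in \Pi(\nu,\nu^k)$; the problem is that when we read this as a bicausal coupling between $\pi$ and $\pi^k$, the first-stage coupling $\chi$ we use is $\eta^k$ (or its ``reverse''), and we then need, $\eta^k(dx,dx^k)$-a.e., a coupling $\gamma_{(x,x^k)} \in \Pi(\pi_x, \pi^k_{x^k})$ whose $\mathcal W_r^r$-cost is small. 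So the real content is: \textbf{(i)} choose $\eta^k$ and $\tau^k$ as the monotone (quantile) couplings, which in dimension one are optimal; \textbf{(ii)} define $\pi^k$ by the above gluing formula; \textbf{(iii)} identify the conditional kernel $\pi^k_{x^k}$ and show that, along the monotone coupling, it is close in $\mathcal W_r$ to $\pi_x$ for the matched $x$.

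Concretely, using inverse transform sampling (property \ref{it:inverse transform sampling3}), I would realize everything on $(0,1)$ with Lebesgue measure $\mathrm{Leb}$: write $X = F_{\mu^k}^{-1}(U)$ on the $\mu^k$-side and $X' = F_\mu^{-1}(U)$ on the $\mu$-side with the \emph{same} uniform $U$, giving the monotone $\eta^k$ and simultaneously $\mathcal W_r^r(\mu,\mu^k) = \int_0^1 |F_\mu^{-1}(u) - F_{\mu^k}^{-1}(u)|^r\,du$; likewise use a second uniform $V$, independent of $U$, to sample from $\pi_{X'}$ via a measurable selection of its quantile function and to push to the $\nu^k$-side by the monotone $\tau^k$. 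Then $\pi^k$ is the law of $(X, \,\text{(image of the }\nu\text{-point under }\tau^k))$. For part \ref{it:adapted approximation dim11}, one only needs qualitative convergence: $F_{\mu^k}^{-1} \to F_\mu^{-1}$ in $L^r(0,1)$ and $F_{\nu^k}^{-1} \to F_\nu^{-1}$ in $L^r(0,1)$, and a routine continuity/stability argument for the conditional kernels (or one simply invokes that $\mathcal W_r \le \mathcal{AW}_r$ together with the characterization \eqref{eq:AW=W} and a tightness argument), yields $\mathcal{AW}_r(\pi^k,\pi) \to 0$.

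For the quantitative estimate \ref{it:adapted approximation dim12}, the plan is to exhibit a single bicausal coupling $\eta$ between $\pi$ and $\pi^k$ realizing the bound. Take $\chi = \eta^k$ (the monotone coupling of $\mu$ and $\mu^k$) as the first-stage coupling; the hypothesis on atoms of $\mu^k$ is exactly what guarantees that under the monotone coupling, each atom $x$ of $\mu^k$ is matched \emph{entirely} within a single level set interval $[\mu((-\infty,x')), \mu((-\infty,x'])]$, so that the conditional law of the $\mu$-coordinate given $\{X = x\}$ is well-understood and the kernel $x' \mapsto \pi_{x'}$ can be averaged consistently; this lets us define $\pi^k_x$ as a suitable mixture/pushforward of the $\pi_{x'}$'s over the matched $x'$, and simultaneously build $\gamma_{(x',x)} \in \Pi(\pi_{x'}, \pi^k_x)$ whose cost integrates (against $\chi$) to at most $\int_0^1 |F_\nu^{-1} - F_{\nu^k}^{-1}|^r\,du = \mathcal W_r^r(\nu,\nu^k)$ — here the monotone choice of $\tau^k$ and the fact that the same uniform $V$ drives both $\pi$ and $\pi^k$ in the second coordinate is what makes the second-coordinate cost collapse to precisely the marginal Wasserstein cost. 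Plugging this $\eta$ into the bicausal formula for $\mathcal{AW}_r^r$ gives $d_X^r + d_Y^r$ integrating to $\mathcal W_r^r(\mu,\mu^k) + \mathcal W_r^r(\nu,\nu^k)$.

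\textbf{Main obstacle.} The delicate step is (iii)/part \ref{it:adapted approximation dim12}: making the disintegration bookkeeping rigorous. When $\mu^k$ has atoms, the monotone coupling $\eta^k$ spreads a single $\mu^k$-atom over an interval of $\mu$-mass, so $\pi^k_x$ must be defined as an average of a whole family $(\pi_{x'})_{x'}$, and one has to check (a) that this average is a genuine probability kernel, measurable in $x$; (b) that, \emph{conditionally} on the matched block, the monotone second-stage coupling really does give a kernel $\gamma_{(x',x)} \in \Pi(\pi_{x'},\pi^k_x)$ — i.e. that its second marginal is $\pi^k_x$ and not something else; and (c) that the bicausality constraint is genuinely satisfied (the condition that $\eta_{(x,y)}$ depends on $x$ only). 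The atom hypothesis in \ref{it:adapted approximation dim12} is precisely the hypothesis that removes the pathological case where an atom of $\mu^k$ straddles the boundary between two different behaviours of the kernel $x' \mapsto \pi_{x'}$; I would spend most of the effort verifying that under this hypothesis the block structure is clean enough for the averaging and the gluing to go through, and that the independence of $U$ and $V$ in the construction is what guarantees bicausality.
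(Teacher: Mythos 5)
Your construction is the same as the paper's: the paper realises exactly your gluing on $(0,1)\times(0,1)$ via copulas, setting $C_u=((y,w)\mapsto F_\nu(y-)+w\nu(\{y\}))_\ast(\pi_{F_\mu^{-1}(u)}\times\lambda)$, averaging $C^k_u=\int_0^1 C_{\theta_k(F_{\mu^k}^{-1}(u),w)}\,dw$ over the jump of $F_{\mu^k}$ (your block average of the $\pi_{x'}$), and taking $\pi^k=(F_{\mu^k}^{-1},F_{\nu^k}^{-1})_\ast C^k$. Your treatment of part \ref{it:adapted approximation dim12} is also correct and matches the paper: the atom hypothesis forces every jump of $F_{\mu^k}$ to sit inside a jump of $F_\mu$, so $C^k_u=C_u$, the first-coordinate cost is $\int_0^1|F_\mu^{-1}-F_{\mu^k}^{-1}|^r\,du$, and the second-coordinate cost, driven by the common copula variable, collapses to $\int_0^1|F_\nu^{-1}-F_{\nu^k}^{-1}|^r\,dv$. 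Note, however, that you have located the "main obstacle" in the wrong place: under the hypothesis of \ref{it:adapted approximation dim12} the bookkeeping is the easy part.

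The genuine gap is in part \ref{it:adapted approximation dim11}, where no atom hypothesis is available and the term you dismiss as a "routine continuity/stability argument" is the actual content of the proof. After Minkowski's inequality the only nontrivial term is $\int_0^1\mathcal W_r^r\bigl((F_\nu^{-1})_\ast C_u,(F_\nu^{-1})_\ast C^k_u\bigr)\,du$, i.e.\ the discrepancy between $\pi_{x'}$ and the block average of the kernel over the atom of $\mu^k$ matched to $x'$. Since $u\mapsto C_u$ is merely measurable, this does not follow from any continuity of the kernel; the paper proves $du$-a.e.\ convergence by testing against a countable convergence-determining family, invoking the Lebesgue differentiation theorem at points where $F_\mu$ is continuous at $x_u$, and running a separate argument (with the quantities $a_k,b_k$ in \eqref{eq:cases in prop 2.1}) for $u$ in the interior of a jump of $F_\mu$, where $g$ is locally constant; it then establishes uniform integrability of $u\mapsto\mathcal W_r^r((F_\nu^{-1})_\ast C_u,(F_\nu^{-1})_\ast C^k_u)$ via the functional $I^r_\epsilon(\nu)$ of Lemma \ref{lem:uniform integrability}. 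None of this is supplied in your plan. Your fallback — "invoke $\mathcal W_r\le\mathcal{AW}_r$ together with \eqref{eq:AW=W} and a tightness argument" — does not work: the inequality goes the wrong way ($\mathcal{AW}_r$ is the \emph{stronger} metric), and tightness of $(J(\pi^k))_k$ on $\R\times\mathcal P(\R)$ only yields accumulation points $P$ whose mean measure is $\pi$; such a $P$ need not be of the form $J(\sigma)$, let alone equal $J(\pi)$, which is precisely the failure mode that makes the adapted topology strictly finer. So part \ref{it:adapted approximation dim11} is not established by the proposal as written.
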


\begin{remark}\label{rk: almost martingale} If $\pi$ is a martingale coupling, i.e.\ $\int_\R y'\,\pi_{x'}(dy')=x'$, $\mu(dx')$-almost everywhere, then for $\chi^k\in\Pi(\mu^k,\mu)$ an optimal coupling for $\mathcal{AW}_r(\pi^k,\pi)$, we have
	\begin{align*}
	\int_\R\left\vert x-\int_\R y\,\pi^k_x(dy)\right\vert^r\,\mu^k(dx)&=\int_{\R\times\R}\left\vert x-\int_\R y\,\pi^k_x(dy)\right\vert^r\,\chi^k(dx,dx')\\
	&\le2^{r-1}\int_{\R\times\R}\left(\vert x-x'\vert^r+\left\vert x'-\int_\R y\,\pi^k_x(dy)\right\vert^r\right)\,\chi^k(dx,dx')\\
	&=2^{r-1}\int_{\R\times\R}\left(\vert x-x'\vert^r+\left\vert \int_\R y'\,\pi_{x'}(dy')-\int_\R y\,\pi^k_x(dy)\right\vert^r\right)\,\chi^k(dx,dx')\\
	&\le2^{r-1}\int_{\R\times\R}\left(\vert x-x'\vert^r+\mathcal W_1^r(\pi^k_x,\pi_{x'})\right)\,\chi^k(dx,dx')\\
	&\le2^{r-1}\mathcal{AW}_r^r(\pi,\pi^k)\underset{k\to+\infty}{\longrightarrow}0.
	\end{align*}
	In that sense, $\pi^k,k\in\N$ is almost a sequence of martingale couplings.
\end{remark}

In the setting of Proposition \ref{prop:adapted approximation dim1} \textcolor{red}{and Remark \ref{rk: almost martingale}}, if $\mu^k$ and $\nu^k$ are also in the convex order and $\pi$ is a martingale coupling, then in view of Remark \ref{rk: almost martingale} one would naturally expect that $\pi^k$ can be slightly modified into a martingale coupling and still converge to $\pi$ in $\mathcal{AW}_r$. This actually requires a considerable amount of work and is the main message of Theorem \ref{thm:1} below. We mention that the previous proposition generalises to arbitrary Polish spaces $X$ and $Y$, as the next proposition states, but unfortunately without providing an estimate.

	 \begin{proposition}\label{prop:adapted approximation}
	Let $\mu,\mu^k\in\mathcal M_r(X),\nu,\nu^k\in\mathcal M_r(Y)$, $k\in\N$, all with equal masses and such that $\mu^k$ (resp.\ $\nu^k)$ converges to $\mu$ (resp.\ $\nu$) in $\mathcal W_r$.
	Let $\pi\in\Pi(\mu,\nu)$.
	Then there exists a sequence $\pi^k\in \Pi(\mu^k,\nu^k)$, $k\in\N,$ converging to $\pi$ in $\mathcal{AW}_r$.
\end{proposition}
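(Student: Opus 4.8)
The plan is to reduce the general Polish-space statement to the one-dimensional case already treated in Proposition \ref{prop:adapted approximation dim1}\ref{it:adapted approximation dim11}, via the identification \eqref{eq:AW=W} of $\mathcal{AW}_r$ with an ordinary Wasserstein distance on $X\times\mathcal P(Y)$ through the map $J$. More precisely, recall that $\mathcal{AW}_r(\pi,\pi')=\mathcal W_r(J(\pi),J(\pi'))$, so the task is to produce $\pi^k\in\Pi(\mu^k,\nu^k)$ with $J(\pi^k)\to J(\pi)$ in $\mathcal W_r$ on the space $X\times\mathcal P_r(Y)$ (equipped with a product-type metric $d_X^r+\mathcal W_r^r$). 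The difficulty, compared to the classical Fact \ref{RoleModel}, is that we cannot independently glue approximations of the two marginals: the conditional laws $\pi_x$ must be transported to genuine conditional laws of some coupling of $\mu^k$ and $\nu^k$, and in particular the second marginal of $\pi^k$ must be exactly $\nu^k$, not merely close to $\nu^k$.

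First I would reduce to the case where $\mu$, $\nu$ and all the $\mu^k$, $\nu^k$ are probability measures (dividing by the common mass $m_0$). Next, I would use a quantile/inverse-transform-sampling device to put everything on the Lebesgue space $((0,1),\mathrm{Leb})$: since $X$ is Polish, fix a Borel isomorphism — or rather work with a measurable map realizing $\mu$ as a pushforward. Concretely, let $U$ be uniform on $(0,1)$ and build $X$-valued random variables $\xi,\xi^k$ with laws $\mu,\mu^k$ such that $\xi^k\to\xi$ in $L^r$, which is possible precisely because $\mu^k\to\mu$ in $\mathcal W_r$ (take optimal couplings and concatenate, or use a common source of randomness); the key point is to arrange $\xi^k$ to be a \emph{function} of $\xi$ and some independent noise so that conditioning on $\xi$ is transparent. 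Then set the first-coordinate part of $\pi^k$ by pushing $\pi$ along $\xi^k$, i.e. sample $x\sim\mu$, then $y\sim\pi_x$, then replace $x$ by $\xi^k(x,\text{noise})$; this gives the right first marginal $\mu^k$ and, by the $L^r$-convergence together with continuity of $y\mapsto\pi_y$ in a suitable a.e.\ sense, it makes the $X$-part and the kernel part of $J(\pi^k)$ converge to those of $J(\pi)$. The remaining job is to fix the second marginal, which at this stage equals $\nu$, not $\nu^k$: here I would apply the same inverse-transform-sampling trick in the $Y$-variable, rewriting each conditional law $\pi_x$ via its own randomization and composing with a transport map from $\nu$ to $\nu^k$ obtained from a $\mathcal W_r$-optimal coupling $\tau^k\in\Pi(\nu,\nu^k)$, exactly as in the formula
\[
\pi^k(dx^k,dy^k)=\int_{(x,y)\in X\times Y}\eta^k(dx^k,dx)\,\pi_x(dy)\,\tau^k_y(dy^k)
\]
displayed in the excerpt — but now taking care that the $Y$-disintegration of the resulting measure with respect to its $X$-marginal is a deterministic modification of $\pi_x$ composed with $\tau^k$, so that the adapted (and not merely the plain) Wasserstein distance is controlled. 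The estimate $\mathcal W_r^r(\pi_x,\,(\tau^k_{\cdot})_\#\pi_x)\le\int\mathcal W\ldots$ is then bounded, after integrating against $\mu$, by $\mathcal W_r^r(\nu,\nu^k)\to0$, while the $X$-part contributes $\mathcal W_r^r(\mu,\mu^k)\to0$; combining via the triangle inequality for $\mathcal{AW}_r$ yields the claim.

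The main obstacle, and the reason the one-dimensional Proposition gives an explicit estimate while the Polish version does not, is the simultaneous handling of the two marginals together with the adaptedness: on $\R$ one can use the monotone (quantile) couplings and the explicit structure of conditional distributions to glue the two approximations in a single formula with a clean bound, whereas on a general Polish space one must invoke a measurable-selection / Kuratowski–Ryll-Nardzewski argument to choose the transport kernels $x\mapsto(\text{map }\nu\to\nu^k)$ and to verify that $x\mapsto\pi_x$ and the modified kernels vary measurably, and the convergence $J(\pi^k)\to J(\pi)$ must be argued qualitatively (e.g.\ by testing against $f\in\Phi_r$, or by producing an explicit bicausal coupling between $\pi$ and $\pi^k$ using the characterisation $\eta=\chi\otimes\gamma_{(x,x')}$ from the excerpt) rather than quantitatively. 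I would therefore organize the proof around constructing such an explicit bicausal coupling $\eta^k\in\Pi_{bc}(\pi,\pi^k)$: take $\chi^k\in\Pi(\mu,\mu^k)$ optimal, and for $\chi^k$-a.e.\ $(x,x^k)$ let $\gamma^k_{(x,x^k)}\in\Pi(\pi_x,\pi^k_{x^k})$ be the pushforward of $\pi_x$ under $\mathrm{id}\times(\text{a }\mathcal W_r\text{-optimal map }\pi_x\to\pi^k_{x^k})$ — the construction of $\pi^k$ should be reverse-engineered precisely so that this $\gamma^k$ has small cost, giving $\mathcal{AW}_r^r(\pi,\pi^k)\le\int d_X^r\,d\chi^k+\int\mathcal W_r^r(\pi_x,\pi^k_{x^k})\,d\chi^k\to0$.
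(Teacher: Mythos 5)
Your construction is, at its core, the classical gluing $\pi^k(dx^k,dy^k)=\int\eta^k(dx^k,dx)\,\pi_x(dy)\,\tau^k_y(dy^k)$, and this is exactly the formula the paper records as proving only the $\mathcal W_r$-statement \eqref{eq:rate}, not the $\mathcal{AW}_r$-statement. The gap is in the step where you need the disintegration of $\pi^k$ with respect to its first marginal to be $\mathcal W_r$-close, kernel by kernel, to $\pi_x$: writing $\eta^k(dx^k,dx)=\mu^k(dx^k)\,\eta^k_{x^k}(dx)$, one has $\pi^k_{x^k}=\int\eta^k_{x^k}(dx)\,(\tau^k_\cdot)_\#\pi_x$, i.e.\ a \emph{mixture} of the transported kernels over all $x$ charged by $\eta^k_{x^k}$. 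Whenever mass of $\mu$ at several points with genuinely different kernels is merged into one point of $\mu^k$ (which cannot be avoided in general, and is why Proposition \ref{prop:adapted approximation dim1}\ref{it:adapted approximation dim12} needs its atom condition), this mixture need not be close to any single $\pi_x$, because $x\mapsto\pi_x$ is only measurable. Your appeal to ``continuity of $y\mapsto\pi_y$ in a suitable a.e.\ sense'' is precisely the missing ingredient — no such continuity is available — and the final paragraph, where you say the construction ``should be reverse-engineered'' so that the bicausal coupling has small cost, defers exactly the point that has to be proved. Note that even in dimension one the paper must work to control these mixtures (the averaged copulas $C^k_u$), and the argument there leans on the total order of $\R$ and the Lebesgue differentiation theorem, neither of which is available on a Polish space.

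The paper's actual route is different and supplies the missing mechanism: by Lusin's theorem one restricts to a closed set $F$ with $\mu(X\setminus F)<\epsilon$ on which $x\mapsto\pi_x$ \emph{is} continuous, extends it to a continuous kernel $\overline\pi_x$ by a Tietze--Dugundji extension, and then builds a continuous cost $C(x,p)$, strictly convex in $p$, that vanishes exactly when $p=\overline\pi_x$. The stability theory of weak optimal transport (Remark \ref{rem:WOT results}, from \cite{BaPa19,BaBePa18}) then guarantees that the $V_C(\mu^k,\nu^k)$-optimizers $\pi^{k,\epsilon}$ converge in $\mathcal{AW}_r$ to the $V_C(\mu,\nu)$-optimizer, whose kernels are forced (by near-zero cost) to be close to $\pi_x$ on $F$; a diagonal argument in $\epsilon$ finishes. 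If you want to salvage a direct construction, you would at minimum need the Lusin/extension step to replace your unavailable continuity assumption, and even then you would have to control the mixing described above — which is essentially what the strict convexity of $C$ and the WOT stability theorem accomplish.
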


The next proposition is a key ingredient which allows us to reduce the proof of Theorem \ref{thm:1} below to the case of irreducible pairs of marginals. For $\mu\in\mathcal M_1(\R)$, we denote by $u_\mu$ its potential function, that is the map defined for all $y\in\R$ by $u_\mu(y) = \int_\R |y-x|\,\mu(dx)$ (see Section \ref{sec: convex order} for more details). We recall that a pair $(\mu,\nu)$ of finite positive measures in convex order is called irreducible if $I=\{u_\mu<u_\nu\}$ is an interval and \textcolor{blue}{then, $\mu(I)=\mu(\R)$ and $\nu(\overline I)=\nu(\R)$.
\begin{remark}\label{remassbord}
   If $(\mu,\nu)$ is an irreducible pair of non-zero measures in the convex order and $a\in\R$ is such that $\nu([a,+\infty))=0$, then the convex order implies $\mu([a,+\infty))=0$, hence
\[
u_\mu(a)=a-\int_\R x\,\mu(dx)=a-\int_\R y\,\nu(dy)=u_\nu(a),
\]
so $a\notin I$. Similarly, $\nu((-\infty,a])=0\implies a\notin I$. We deduce that $\nu$ must assign positive mass to any neighbourhood of each of the boundaries of $I$.
\end{remark}}

According to \cite[Theorem A.4]{BeJu16}, for any pair $(\mu,\nu)$ of probability measures in convex order, there exist $N\subset\N$ and a sequence $(\mu_n,\nu_n)_{n\in N}$ of irreducible pairs of sub-probability measures in convex order such that
\begin{align*}
\mu=\eta+\sum_{n\in N}\mu_n,\quad\nu=\eta+\sum_{n\in\N}\nu_n\quad\text{and}\quad\left\{u_\mu<u_\nu\right\}=\bigcup_{n\in N}\left\{u_{\mu_n}<u_{\nu_n}\right\},
\end{align*}
where the union is disjoint and $\eta=\mu\vert_{\{u_\mu=u_\nu\}}$. The sequence $(\mu_n,\nu_n)_{n\in N}$ is unique up to rearrangement of the pairs and is called the decomposition of $(\mu,\nu)$ into irreducible components. Moreover, for any martingale coupling $\pi\in\Pi_M(\mu,\nu)$, there exists a unique sequence of martingale couplings $\pi_n\in\Pi_M(\mu_n,\nu_n)$, $n\in N$ such that
\[
\pi=\chi+\sum_{n\in N}\pi_n,
\]
where $\chi=(\operatorname{id},\operatorname{id})_\ast\eta$ and $\ast$ denotes the pushforward operation. This sequence satisfies
\begin{equation}\label{eq:decomposition martingale couplings}
\forall n\in N,\quad\pi_n(dx,dy)=\mu_n(dx)\,\pi_x(dy).
\end{equation}

\begin{proposition}\label{prop:approximationofdecomp}
	Let $(\mu^k,\nu^k)_{k\in\N}$ be a sequence of pairs of probability measures on the real line in convex order which converge to $(\mu,\nu)$ in $\mathcal W_1$.
	Let $(\mu_n,\nu_n)_{n\in N}$ be the decomposition of $(\mu,\nu)$ into irreducible components and $\eta=\mu\vert_{\{u_\mu=u_\nu\}}$. Then there exists for any $k\in\N$ a decomposition of $(\mu^k,\nu^k)$ into pairs of sub-probability measures $(\mu^k_n,\nu^k_n)_{n\in N}$, $(\eta^k,\upsilon^k)$ which are in convex order such that
	\begin{gather} \label{eq:approx decomp}
	\eta^k + \sum_{n\in N} \mu^k_n = \mu^k,\quad \upsilon^k + \sum_{n\in N} \nu^k_n = \nu^k,\quad k\in\N, \\ \label{eq:conv of decomp}
	\lim_{k\to+\infty} \eta^k = \eta,\quad \lim_{k\to+\infty} \mu^k_n = \mu_n,\quad \lim_{k\to+\infty} \nu^k_n = \nu_n,\quad \lim_{k\to+\infty} \upsilon^k =\eta\quad \text{in }\mathcal W_1.
	\end{gather}
\end{proposition}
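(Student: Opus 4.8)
The plan is to build the approximate decomposition by pulling back the components of the limit along optimal transport maps, then correcting for the martingale/convex-order constraint. Since $\mu^k \to \mu$ and $\nu^k \to \nu$ in $\mathcal W_1$, fix for each $k$ an optimal coupling $\chi^k \in \Pi(\mu, \mu^k)$ for $\mathcal W_1(\mu, \mu^k)$ and an optimal coupling $\tau^k \in \Pi(\nu, \nu^k)$ for $\mathcal W_1(\nu, \nu^k)$. The irreducible components $(\mu_n, \nu_n)_{n \in N}$ together with $(\eta, \eta)$ partition $\mu$ and $\nu$ into mutually singular pieces: writing $\mu_n = \mu|_{A_n}$ and $\eta = \mu|_{A_\eta}$ for a Borel partition $(A_n)_{n \in N}, A_\eta$ of $\R$ (essentially $A_n$ the part of $I_n = \{u_{\mu_n} < u_{\nu_n}\}$ carrying $\mu_n$, and $A_\eta = \{u_\mu = u_\nu\}$), and similarly $\nu_n = \nu|_{B_n}$, $\eta = \nu|_{B_\eta}$ on the $\nu$-side (with $\overline{I_n} \supset \operatorname{supp} \nu_n$), define
\[
\mu^k_n(dx^k) = \int_{x \in A_n} \chi^k(dx, dx^k), \qquad \eta^k(dx^k) = \int_{x \in A_\eta} \chi^k(dx, dx^k),
\]
and analogously $\nu^k_n, \upsilon^k$ from $\tau^k$ via the sets $B_n, B_\eta$. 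This immediately gives \eqref{eq:approx decomp} because $\chi^k, \tau^k$ have second marginals $\mu^k, \nu^k$, and it gives \eqref{eq:conv of decomp}: each $\mu^k_n \to \mu_n$ in $\mathcal W_1$ (the restriction of an $\mathcal W_1$-optimal coupling to a Borel set in the first variable is an $\mathcal W_1$-optimal coupling between the corresponding restrictions of the marginals, and since $\mu|_{A_n} = \mu_n \to \mu_n$ trivially the pushforward on the second coordinate converges), and similarly $\nu^k_n \to \nu_n$, $\eta^k \to \eta$, $\upsilon^k \to \eta$.

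The obstacle is that $(\mu^k_n, \nu^k_n)$ and $(\eta^k, \upsilon^k)$ constructed this way need not be in convex order, and $\eta^k \neq \upsilon^k$ in general even though both converge to $\eta$. To fix this I would work at the level of potential functions, using the machinery promised in Section~\ref{sec: convex order}. The key point is that $u_{\mu^k_n} \to u_{\mu_n}$ and $u_{\nu^k_n} \to u_{\nu_n}$ pointwise (in fact locally uniformly, by $\mathcal W_1$-convergence), and $u_{\mu_n} \le u_{\nu_n}$ with equality outside the open interval $I_n$, with the gap $u_{\nu_n} - u_{\mu_n}$ vanishing to first order at the endpoints of $I_n$. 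One then redistributes mass between consecutive components: since the irreducible intervals $I_n$ are disjoint, one can transfer the small ``overhang'' of $\mu^k_n$ sticking out of $I_n$ (or the analogous defect) into the neighbouring component or into $\eta^k$, choosing the corrections so that the resulting potentials are ordered. Concretely, I expect to define corrected measures $\tilde\mu^k_n$ by adjusting $\mu^k_n$ on a shrinking neighbourhood of $\partial I_n$ so that $u_{\tilde\mu^k_n} \le u_{\tilde\nu^k_n}$ holds with the correct boundary behaviour, compensating in $\eta^k$ (respectively making $\eta^k = \upsilon^k$ by averaging and pushing the discrepancy, which is $o(1)$ in $\mathcal W_1$, back into the components). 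Because all corrections have total variation and $\mathcal W_1$-size tending to $0$, \eqref{eq:conv of decomp} survives.

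The main technical difficulty is precisely this redistribution step: one must ensure simultaneously that (i) every corrected pair is genuinely in convex order (not merely that the potentials are eventually ordered on compacts), (ii) the corrected components still sum exactly to $\mu^k$ and $\nu^k$, and (iii) $\eta^k$ and $\upsilon^k$ can be made equal, or at least both shown to converge to $\eta$. I would handle (i) by exploiting that $u_{\nu_n} - u_{\mu_n} > 0$ on compact subsets of $I_n$ is bounded below, so that for large $k$ a controlled perturbation cannot destroy the inequality in the interior, while near $\partial I_n$ the explicit construction forces the right one-sided behaviour; here the remark in the excerpt that $\nu$ assigns positive mass to every neighbourhood of $\partial I$ is what makes the boundary correction feasible. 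Point (ii) is bookkeeping once the corrections are defined as signed measures summing to zero, and (iii) follows by symmetrising $\eta^k$ and $\upsilon^k$ and absorbing the $o(1)$ difference. The countability of $N$ requires only that the corrections be summable, which holds since $\sum_n \mu_n(\R) \le \mu(\R) < \infty$ controls the total size of all perturbations.
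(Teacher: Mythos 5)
There is a genuine gap: the convex-order requirement, which you yourself identify as ``the main technical difficulty,'' is never actually resolved. Your construction pulls back the two decompositions \emph{independently} — $\mu^k_n$ through an optimal coupling of $(\mu,\mu^k)$ and $\nu^k_n$ through an optimal coupling of $(\nu,\nu^k)$ — so nothing links $\mu^k_n$ to $\nu^k_n$, and indeed the pairs need not be in convex order (their barycentres need not even agree). The proposed repair is only a programme: you would have to adjust countably many components simultaneously, preserve the exact identities $\sum_n \mu^k_n + \eta^k = \mu^k$ and $\sum_n \nu^k_n + \upsilon^k = \nu^k$, keep every corrected pair genuinely in convex order (equal mass, equal mean, ordered potentials everywhere, with the right linear behaviour at infinity), and control all corrections in $\mathcal W_1$ even when the intervals $I_n$ accumulate. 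None of this is carried out, and it is far from routine; as written, the proof establishes \eqref{eq:approx decomp} and \eqref{eq:conv of decomp} only for a decomposition that fails the convex-order hypothesis of the statement.

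The paper's proof avoids this difficulty entirely by a different choice of construction, which is the missing idea here. Fix any \emph{martingale} coupling $\pi^k\in\Pi_M(\mu^k,\nu^k)$ (it exists by Strassen since $\mu^k\le_c\nu^k$), define $\mu^k_n$ as the quantile block $\int_{F_\mu(l_n)}^{F_\mu(r_n-)}\delta_{F_{\mu^k}^{-1}(u)}\,du$ of $\mu^k$ corresponding to the quantile levels occupied by $\mu_n$ in $\mu$, and then define $\nu^k_n$ as the image of $\mu^k_n$ under the kernel $\pi^k_x$. Because $\pi^k_n=\mu^k_n\times\pi^k_x$ is itself a martingale coupling, the relation $\mu^k_n\le_c\nu^k_n$ is automatic, and the summation identities hold by construction. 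Convergence of $\mu^k_n$ to $\mu_n$ follows from the monotone rearrangement, and convergence of $\nu^k_n$ to $\nu_n$ follows from tightness of $(\pi^k_n)_k$ together with the identification of any accumulation point with $\pi|_{I_n\times\R}$ via mass and marginal constraints. If you want to salvage your route, the lesson is that the second family must be generated from the first through a martingale transport of $(\mu^k,\nu^k)$, not obtained by an independent approximation of $\nu$.
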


We can now state our main result, namely Theorem \ref{thm:1} below. Any martingale coupling whose marginals are approximated by probability measures in convex order can be approximated by martingale couplings with respect to the adapted Wasserstein distance.

\begin{theorem}\label{thm:1} Let $\mu^k,\nu^k\in\mathcal P_r(\R)$, $k\in\N$, be in convex order and respectively converge to $\mu$ and $\nu$ in $\mathcal W_r$. Let $\pi\in\Pi_M(\mu,\nu)$. Then there exists a sequence of martingale couplings $\pi^k\in\Pi_M(\mu^k,\nu^k)$, $k\in\N$ converging to $\pi$ in $\mathcal{AW}_r$.
\end{theorem}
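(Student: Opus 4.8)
The plan is to reduce the theorem to the irreducible case and then to build the approximating martingale couplings explicitly on intervals. First I would use Proposition~\ref{prop:approximationofdecomp}: decompose $(\mu,\nu)$ into its irreducible components $(\mu_n,\nu_n)_{n\in N}$ together with the diagonal part $\eta=\mu\vert_{\{u_\mu=u_\nu\}}$, and decompose $\pi=\chi+\sum_{n\in N}\pi_n$ accordingly (with $\chi=(\operatorname{id},\operatorname{id})_\ast\eta$ and $\pi_n\in\Pi_M(\mu_n,\nu_n)$). Proposition~\ref{prop:approximationofdecomp} furnishes matching decompositions $\mu^k=\eta^k+\sum_n\mu^k_n$, $\nu^k=\upsilon^k+\sum_n\nu^k_n$ in convex order with $\mu^k_n\to\mu_n$, $\nu^k_n\to\nu_n$, $\eta^k\to\eta$, $\upsilon^k\to\eta$ in $\mathcal W_1$. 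If the irreducible case is known, one gets martingale couplings $\pi^k_n\in\Pi_M(\mu^k_n,\nu^k_n)$ with $\pi^k_n\to\pi_n$ in $\mathcal{AW}_r$, and for the diagonal parts one needs to couple $\eta^k$ to $\upsilon^k$ by a martingale coupling close to $\chi$; here one can use Proposition~\ref{prop:adapted approximation dim1}\ref{it:adapted approximation dim11} (or Remark~\ref{rk: almost martingale}) to produce an approximately-martingale coupling and then correct it, or better, exploit that $\eta^k$ and $\upsilon^k$ are both $\mathcal W_1$-close to $\eta$ to build a genuine martingale coupling between them directly. Summing, $\pi^k:=\chi^k+\sum_n\pi^k_n$ lies in $\Pi_M(\mu^k,\nu^k)$; the continuity-of-addition lemmas announced for Section~\ref{sec:weak adapted topology} are exactly what is needed to pass from componentwise $\mathcal{AW}_r$-convergence to $\mathcal{AW}_r$-convergence of the sum (one must also control the tail $\sum_{n>M}$ uniformly in $k$ using finiteness of $r$-th moments and the convex order).

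Second, for the irreducible case itself, say $I=(\ell,\rho)$ with $\mu(I)=\mu(\R)$ and $\nu(\bar I)=\nu(\R)$, the idea is to transport the marginals into $I$ in a way compatible with the martingale structure. The natural construction: given the $\mathcal W_r$-optimal (monotone) couplings $\eta^k\in\Pi(\mu^k,\mu)$ and $\tau^k\in\Pi(\nu,\nu^k)$ from quantile functions, one would first map $\mu^k$ to an intermediate measure supported on $\bar I$, then use the conditional laws $\pi_x$ of the fixed coupling $\pi$, then push forward the second coordinate to $\nu^k$, and finally perform a \emph{martingale correction}: because $\mu^k\le_c\nu^k$ and both are (essentially) supported where $u_{\mu^k}<u_{\nu^k}$, the barycenter of the resulting kernel is close to $x$ in $L^r$ (this is the content of Remark~\ref{rk: almost martingale}), and one can redistribute mass within $\bar I$ to restore the exact barycenter condition $\int y\,\pi^k_x(dy)=x$ at an $\mathcal{AW}_r$-cost that vanishes. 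The key quantitative input is that near the endpoints of $I$ the measure $\nu$ (hence $\nu^k$, for $k$ large) assigns positive mass to every neighbourhood, so the correction can always "reach" mass on both sides of any $x\in I$; this is precisely why irreducibility is indispensable.

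The main obstacle I expect is this martingale correction step in the irreducible case: naively transporting marginals destroys the martingale property, and restoring it while keeping $\mathcal{AW}_r$ small is delicate because the correction must be done \emph{adaptedly} — the modification of the kernel $\pi^k_x$ may only depend on $x$, not on the second coordinate, so one cannot simply "fix" the coupling after the fact. Controlling the cost requires a careful estimate showing that the amount of mass that must be moved, and the distance it must travel, are both controlled by $\mathcal W_r(\mu^k,\mu)+\mathcal W_r(\nu^k,\nu)$ plus the defect $\int|x-\int y\,\pi^k_x(dy)|^r\mu^k(dx)$, the latter being $O(\mathcal{AW}_r^r(\pi,\pi^k))$ by Remark~\ref{rk: almost martingale}. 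A subtle point is the behaviour when $\mu^k$ has atoms straddling the boundary of $I$ or mass escaping $\bar I$: one must either invoke the hypothesis in Proposition~\ref{prop:adapted approximation dim1}\ref{it:adapted approximation dim12} or handle such atoms by an auxiliary splitting argument (using the potential-function criterion for convex order from Section~\ref{sec: convex order}) so that, after a preliminary reduction, one may assume $\mu^k(\bar I)=\mu^k(\R)$ and $\nu^k(\bar I)=\nu^k(\R)$. Finally, the passage from $\mathcal{AW}_1$ to $\mathcal{AW}_r$ should follow by a uniform-integrability/truncation argument exploiting that all measures have finite $r$-th moments that converge, so it is enough — as announced in Section~\ref{sec:proof main thm} — to prove $\mathcal{AW}_1$-convergence.
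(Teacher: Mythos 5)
Your overall architecture coincides with the paper's: reduce to $r=1$, use Proposition~\ref{prop:approximationofdecomp} to reduce to irreducible components (handling the diagonal part and resumming via the addition lemmas of Section~\ref{sec:weak adapted topology}), and in the irreducible case approximate first without the martingale constraint and then restore the barycentre condition by moving mass supplied by $\nu$ near the endpoints of $I$, with the defect controlled by Remark~\ref{rk: almost martingale}. However, the irreducible case is where essentially all the work lies, and as described your correction step has two genuine gaps. First, correcting the kernel at $x$ by borrowing mass from regions $L_\pm$ near the endpoints only has controllable cost if (i) the mass $\nu^k(L_\pm)$ is bounded below uniformly in $k$ and (ii) the borrowed mass sits at distance bounded below (and above) from the support of the kernel being corrected. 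With the raw kernels $\pi_x$ neither holds: $\pi_x$ may itself charge every neighbourhood of the endpoints of $I$ (indeed $\nu$ must), so the correction mass can be arbitrarily close to, or arbitrarily far from, where it is needed, and the coefficients multiplying it blow up. This is exactly why the paper first replaces $\pi$ by $\pi^{R,\alpha}$ — truncating the kernels to $[-R,R]$ via $\wedge_c$ (Lemma~\ref{lem:compact approx}) and contracting them by $\alpha<1$ towards $x$ (Lemma~\ref{lem:scaling}) — so that for $x$ in a compact $K\subset I$ the corrected kernels live in a compact $L\subset I$ at positive distance $e$ from $L_\pm$, and the correction coefficients are bounded by $(\text{defect})/(e\,\nu^k_\pm(\R))$. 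Your sketch does not contain this preprocessing, and without it the estimate you propose (cost $\lesssim$ marginal errors plus defect) does not go through.

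Second, any such construction only produces a \emph{sub-probability} martingale coupling: the approximation and correction are carried out on a compact $K\subset I$ carrying $\mu$-mass $1-\varepsilon$, and the remaining mass of $\mu^k$ must still be transported to the remaining mass of (a measure convexly dominated by) $\nu^k$ by a martingale coupling. This requires verifying a convex-order inequality between the two leftover marginals, which the paper obtains by reserving an $\varepsilon$-fraction of $\nu^k$, establishing a strict gap $\tau>0$ between potential functions on a compact set, and invoking uniform convergence of potential functions plus Strassen's theorem; a final inverse-transform martingale coupling then restores the exact second marginal $\nu^k$. Your proposal does not address this completion step, and it is not automatic: the leftover marginals need not be in convex order without the deliberate $\varepsilon$-reserve. (By contrast, your worry about atoms of $\mu^k$ straddling $\partial I$ or mass escaping $\bar I$ is not an obstruction the paper needs to remove — the approximating marginals are never forced into $\bar I$.)
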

\begin{proof}[Sketch of the proof]

	We will first argue that it is enough to consider the case $r=1$. Thanks to Proposition \ref{prop:approximationofdecomp}, we can also reduce the proof to the case of irreducible pairs of marginals $(\mu,\nu)$, whose single irreducible component is denoted $(\ell,\rho)=I$.

	\emph{Step 1.} Fix a martingale coupling $\pi\in\Pi_M(\mu,\nu)$. When directly approximating $\pi$ we would face technical obstacles. First, for $K$ a compact subset of $I$, $\mu\vert_K\times\pi_x$ is not necessarily compactly supported. Moreover, $\nu$ may put mass on the boundary of $I$. To overcome \textcolor{red}{successively these two difficulties, the kernel $\pi_x$ is first compactified to a compact set $[-R,R]$, where $R>0$ (when $|\ell|\vee |\rho|<\infty$, one may choose $R$ equal to this maximum)}, and then pushed forward by the map $y\mapsto\alpha(y-x)+x$, where $\alpha\in(0,1)$. This yields a martingale coupling $\pi^{R,\alpha}$ close to $\pi$ and easier to approximate, between $\mu$ and a probability measure $\nu^{R,\alpha}$ dominated by $\nu$ in the convex order. We find compact sets $K,L\subset I$ such that the restriction $\pi^{R,\alpha}\vert_{K\times\R}$ is compactly supported on $K\times L$ and concentrated on $K\times\mathring L$, where $\mathring L$ denotes the interior of $L$. Since, by irreducibility, $\nu$ puts mass onto any neighbourhood of the boundary of $I$, $\nu^{R,\alpha}$ assigns positive mass to two open sets $L_-$, $L_+$ on both sides of $K$ with positive distance to $K$. This is summarised in Figure \ref{fig:bild1}, where $J$ denotes a compact subset of $I$ that is  large enough.

\begin{figure}[h]
	\centering
	\begin{tikzpicture}[decoration = {random steps,segment length=2pt,amplitude=0.1pt}]
	\draw[decorate, line width=\x mm, color = {rgb,255:red,92; green,74; blue,114}, (-)] (0,0) -- (10,0) ;
	\draw[decorate, line width=\x mm, color = {rgb,255:red,163; green,88; blue,109}, |-| ] (0.5,0.2) -- (9.5,0.2) ;

	\draw[decorate, line width=\x mm, color = {rgb,255:red,244; green,135; blue,75}, |-|] (1,-0.2) -- (2,-0.2) ;
	\draw[decorate, line width=\x mm, color = {rgb,255:red,244; green,135; blue,75}, |-| ] (8,-0.2) -- (9,-0.2) ;
	\draw[decorate, line width=\x mm, color = {rgb,255:red,194; green,85; blue,25}, |-| ] (4,-0.2) -- (7,-0.2) ;

	\draw (0,0.1) node[below left, scale = 1.2]{$\ell$} ;
	\draw (10,0) node[below right, scale = 1.2]{$\rho$} ;
	\draw (1.5,-0.2) node[below, scale = 1.2]{$L_-$} ;
	\draw (8.5,-0.2) node[below, scale = 1.2]{$L_+$} ;
	\draw (5.5,-0.2) node[below, scale = 1.2]{$K$} ;
	\draw (5,0.2) node[above, scale = 1.2]{$J$} ;
	\end{tikzpicture}
	\caption{Intervals involved in the proof. The boundaries of the closed intervals are vertical bars and those of the open intervals are parentheses.\label{fig:bild1}}
\end{figure}
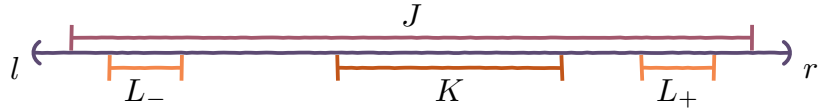


	\emph{Step 2.} It is possible to find an approximating sequence $(\hat\pi^k=\hat\mu^k\times\hat\pi^k_x)_{k\in\N}$ for the sub-probability martingale coupling $\pi^{R,\alpha}\vert_{K\times\R}$ from step 1. Unfortunately $\hat\pi^k$ is not necessarily a martingale coupling. Therefore, we free up some mass, and use the one available on the left and right of $K$ in $L_-$ and $L_+$ to adjust the barycenters of the kernels $\pi^k_x$. Hence we find a sequence $(\tilde\pi^k=\hat\mu^k\times\tilde\pi^k_x)_{k\in\N}$ of sub-probability martingale couplings approximating $\pi^{R,\alpha}\vert_{K\times\R}$.

	\emph{Step 3.} By construction, up to multiplication by a factor smaller than and close to $1$, the first marginal of $\tilde\pi^k$ satisfies $\hat\mu^k\le\mu^k$. Moreover, its second marginal denoted $\tilde\nu^k$ is such that there exists a probability measure $\nu^{R,\alpha,k}$ which satisfies $\tilde\nu^k\le\nu^{R,\alpha,k}\le_c\nu^k$. Then by using the uniform convergence of potential functions, we show that for $k$ sufficiently large there exist sub-probability martingale couplings $\eta^k\in\Pi_M(\mu^k-\hat\mu^k,\nu^{R,\alpha,k}-\tilde\nu^k)$ so that the sum $\eta^k+\tilde\pi^k$ is a martingale coupling in $\Pi_M(\mu^k,\nu^{R,\alpha,k})$, where the second marginal is dominated by $\nu^k$ in the convex order.

	\emph{Step 4.} In the last step, we use the inverse-transform martingale coupling between $\nu^{R,\alpha,k}$ and $\nu^k$, see \cite{JoMa18}, to change $\eta^k+\tilde\pi^k$ to a martingale coupling $\pi^k \in \Pi(\mu^k,\nu^k)$.
	Finally, we estimate the $\mathcal{AW}_1$-distance of $\pi$ to $\pi^k$.
\end{proof}

	\section{On the adapted weak topology}
	\label{sec:adapted weak topology}

We begin this section with a lemma on uniform integrability which will prove very handy throughout the paper. We formulate it for finite positive measures on $X$, but it is understood that $(X,x_0)$ is replaced with $(Y,y_0)$ for measures on $Y$.

	\begin{lemma}\label{lem:uniform integrability}
    Let $r\ge 1$ and $\mu\in\mathcal M_r(X)$.
    For $\epsilon>0$, let
		\begin{equation}\label{eq:def Iepsilonr}
		I_\epsilon^r(\mu):=\sup_{\substack{\tau \in \mathcal M(X) \\ \tau \le \mu,~\tau(X)\leq \epsilon}}\int_{X}d^r_X(x,x_0)\,\tau(dx).
		\end{equation}
    \begin{enumerate}[label = (\alph*)]
      \item \label{it:croisieps} $I_\epsilon^r$ is monotone in $\mu$, i.e., $\mu \leq \mu' \in \mathcal M_r(X)$ implies that $I_\epsilon^r(\mu) \leq I_\epsilon^r(\mu')$.
      \item \label{it:IepsilonrVanishes} The value of $I_\epsilon^r(\mu)$ vanishes as $\epsilon \to 0$.
      \item \label{it:Iespilonr} For any $\mu' \in \mathcal M_r(X)$ such that $\mu(X) = \mu'(X)$ we have
      \begin{equation}\label{ineq: Iepsilonr}
  		    I_\epsilon^r(\mu)\le2^{r-1}\left(I_\epsilon^r(\mu')+\mathcal W_r^r(\mu,\mu')\right).
      \end{equation}
      \item \label{it:uniform integrability} Let $\mu,\mu^k\in\mathcal M_r(X)$, $k\in\N$ be with equal masses such that $\mu^k$ converges weakly to $\mu$. Then
      \[
      \mathcal W_r(\mu^k,\mu)\underset{k\to+\infty}{\longrightarrow}0\iff\sup_{k\in\N}I^r_\varepsilon(\mu^k)\underset{\varepsilon\to0}{\longrightarrow}0\quad\text{and}\quad\sup_{k\in\N}\int_{X}d_X^r(x,x_0)\,\mu^k(dx)<+\infty.
      \]
      \item \label{it:IepsilonrCvx}
      Finally, if $X=\R^d$ and $\mu\le_c\nu$ with $\nu\in\mathcal M_1(\R^d)$, then $I^1_\epsilon(\mu)\le I^1_\epsilon(\nu)$.
    \end{enumerate}
	\end{lemma}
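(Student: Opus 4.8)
I would prove the five items essentially in the stated order, since each relies on the previous ones. For \ref{it:croisieps}: if $\mu \le \mu'$, then any $\tau \le \mu$ with $\tau(X) \le \epsilon$ also satisfies $\tau \le \mu'$, so the supremum defining $I_\epsilon^r(\mu)$ is taken over a smaller set than that defining $I_\epsilon^r(\mu')$; monotonicity is immediate. For \ref{it:IepsilonrVanishes}: since $\mu \in \mathcal M_r(X)$, the function $x \mapsto d_X^r(x,x_0)$ is $\mu$-integrable, so by absolute continuity of the integral, for every $\delta > 0$ there is $\rho$ such that $\int_A d_X^r(x,x_0)\,\mu(dx) < \delta$ whenever $\mu(A) < \delta$; but I must also control the "far away" mass. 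Concretely, write for any $\tau \le \mu$ with $\tau(X) \le \epsilon$ and any $R > 0$,
\[
\int_X d_X^r(x,x_0)\,\tau(dx) \le \int_{\{d_X(x,x_0) > R\}} d_X^r(x,x_0)\,\mu(dx) + R^r\,\tau(X) \le \int_{\{d_X(x,x_0) > R\}} d_X^r(x,x_0)\,\mu(dx) + R^r \epsilon.
\]
Given $\delta > 0$, first pick $R$ so the first term is $< \delta/2$ (possible since the integrand is $\mu$-integrable), then pick $\epsilon$ small so that $R^r \epsilon < \delta/2$; taking the supremum over admissible $\tau$ gives $I_\epsilon^r(\mu) < \delta$.

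For \ref{it:Iespilonr}: let $\pi \in \Pi(\mu,\mu')$ be optimal for $\mathcal W_r(\mu,\mu')$ (using that $\mu,\mu'$ have equal mass). Given $\tau \le \mu$ with $\tau(X) \le \epsilon$, disintegrate $\pi = \mu \times \pi_x$ and set $\tau'(dx') = \int_{x\in X}\frac{d\tau}{d\mu}(x)\,\mu(dx)\,\pi_x(dx')$; then $\tau' \le \mu'$, $\tau'(X) = \tau(X) \le \epsilon$, and by the triangle inequality $d_X^r(x,x_0) \le 2^{r-1}(d_X^r(x,x') + d_X^r(x',x_0))$, so integrating against $\frac{d\tau}{d\mu}(x)\,\pi(dx,dx')$ yields
\[
\int_X d_X^r(x,x_0)\,\tau(dx) \le 2^{r-1}\Big(\int_{X\times X} d_X^r(x,x')\,\pi(dx,dx') + \int_X d_X^r(x',x_0)\,\tau'(dx')\Big) \le 2^{r-1}\big(\mathcal W_r^r(\mu,\mu') + I_\epsilon^r(\mu')\big);
\]
taking the supremum over $\tau$ gives \eqref{ineq: Iepsilonr}. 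For \ref{it:uniform integrability}: the forward implication combines \ref{it:Iespilonr} applied to $\mu^k$ and $\mu$, giving $\sup_k I_\epsilon^r(\mu^k) \le 2^{r-1}(I_\epsilon^r(\mu) + \sup_k \mathcal W_r^r(\mu^k,\mu))$, together with \ref{it:IepsilonrVanishes} and the fact that $\mathcal W_r$-convergence forces $\sup_k \int d_X^r(x,x_0)\,\mu^k(dx) < \infty$ (e.g. by the triangle inequality for $\mathcal W_r$ in $\mathcal M_r$); moreover one also checks $\mathcal W_r^r(\mu^k,\mu) \to 0$ directly bounds the tails. For the converse, weak convergence plus the tightness-type condition $\sup_k I_\epsilon^r(\mu^k) \to 0$ is exactly uniform integrability of $\{d_X^r(\cdot,x_0)\}$ along the sequence, which is the classical criterion (see e.g. \cite{Vi09}) for upgrading weak convergence to $\mathcal W_r$-convergence; I would either cite this or reprove it by a truncation argument using that $x \mapsto d_X^r(x,x_0) \wedge M$ is bounded continuous and the excess is controlled by $I_\epsilon^r$ via Markov's inequality to bound $\sup_k \mu^k(\{d_X(x,x_0) > R\})$.

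For \ref{it:IepsilonrCvx}: here $X = \R^d$, $r = 1$, and $\mu \le_c \nu$. The key point is that the convex-order relation is preserved under the localization defining $I_\epsilon^1$. Given $\tau \le \mu$ with $\tau(\R^d) \le \epsilon$, I want to produce $\tau' \le \nu$ with $\tau'(\R^d) = \tau(\R^d)$ and $\int |x|\,\tau(dx) \le \int |x'|\,\tau'(dx')$. Let $\pi \in \Pi_M(\mu,\nu)$ be a martingale coupling (which exists by Strassen, since $\mu \le_c \nu$), disintegrate $\pi = \mu \times \pi_x$, and set $\tau'(dx') = \int_{x}\frac{d\tau}{d\mu}(x)\,\mu(dx)\,\pi_x(dx')$. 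Then $\tau' \le \nu$ and $\tau'(\R^d) = \tau(\R^d) \le \epsilon$; moreover, by the martingale property and Jensen's inequality, $|x| = \big|\int x'\,\pi_x(dx')\big| \le \int |x'|\,\pi_x(dx')$ for $\mu$-a.e. $x$, so integrating against $\frac{d\tau}{d\mu}(x)\,\mu(dx)$ gives $\int |x|\,\tau(dx) \le \int |x'|\,\tau'(dx') \le I_\epsilon^1(\nu)$. Taking the supremum over admissible $\tau$ yields $I_\epsilon^1(\mu) \le I_\epsilon^1(\nu)$. The main obstacle, such as it is, lies in item \ref{it:uniform integrability}: making the equivalence with uniform integrability fully rigorous requires care with the direction from the tail/tightness conditions back to $\mathcal W_r$-convergence, but this is standard and the truncation argument sketched above closes it; the convex-order item \ref{it:IepsilonrCvx} is then an easy consequence of Strassen's theorem and Jensen once one has the disintegration trick, which is the same transport-of-mass idea used in \ref{it:Iespilonr}.
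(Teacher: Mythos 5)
Your proposal is correct, and it diverges from the paper's proof in two places worth noting. For \ref{it:IepsilonrVanishes} you use a direct truncation bound $\int_X d_X^r(x,x_0)\,\tau(dx)\le\int_{\{d_X(x,x_0)>R\}}d_X^r(x,x_0)\,\mu(dx)+R^r\epsilon$, whereas the paper establishes the exact identity $I^r_\varepsilon(\mu)=\int_{1-\varepsilon}^{1}F_{\overline\mu}^{-1}(u)\,du$ for the pushforward $\overline\mu$ of $\mu$ under $x\mapsto d_X^r(x,x_0)$; the paper explicitly remarks that a mere upper bound would suffice for \ref{it:IepsilonrVanishes} but keeps the identity because it is reused in \ref{it:IepsilonrCvx}. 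Correspondingly, for \ref{it:IepsilonrCvx} the paper shows $\overline\mu$ is dominated by $\overline\nu$ in the increasing convex order and invokes the quantile characterisation of that order together with the identity, while you instead take a martingale coupling $\pi\in\Pi_M(\mu,\nu)$ from Strassen's theorem, push $\tau$ forward through the kernel $\pi_x$, and apply Jensen via $|x-x_0|=|\int(x'-x_0)\,\pi_x(dx')|\le\int|x'-x_0|\,\pi_x(dx')$ — this is a self-contained and arguably more elementary argument that sidesteps the quantile machinery entirely (mind only that the integrand is $d_X(x,x_0)=|x-x_0|$, not $|x|$, and that one normalises to probability measures before invoking Strassen). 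Items \ref{it:croisieps} and \ref{it:Iespilonr} match the paper. For the converse direction of \ref{it:uniform integrability} the paper gives a Skorokhod-representation argument where you defer to the classical uniform-integrability criterion; your identification of $\sup_k I^r_\varepsilon(\mu^k)\to0$ (given bounded $r$-th moments) with uniform integrability of $d_X^r(\cdot,x_0)$ along the sequence is correct via Markov's inequality, so citing the standard result closes this cleanly, at the cost of relying on an external theorem where the paper is self-contained.
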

	\begin{remark} If $\mu(X)\le\epsilon$, then $I_\epsilon^r(\mu)$ is simply the $r$-th moment of $\mu$.
	\end{remark}
	\begin{proof}
    The first point \ref{it:croisieps} is an easy consequence of the definition of $I^r_\epsilon$.

    Next we check \ref{it:IepsilonrVanishes}. Let $\mu\in\mathcal M_r(X)$ be such that $\mu(X)>0$. Since
    \begin{equation}\label{scali}
    I^r_\varepsilon(\mu)=\mu(X)I_{\frac{\varepsilon}{\mu(X)}}^r\left(\frac{\mu}{\mu(X)}\right),
    \end{equation}
    to check convergence of $I^r_\varepsilon(\mu)$ to $0$ as $\varepsilon\to 0$, we may suppose that $\mu\in\mathcal P_r(X)$. Let $\varepsilon\in(0,1)$. For $\eta\in\mathcal M_r(X)$, we denote by $\overline\eta$ the image of $\eta$ under the map $x\mapsto d_X^r(x,x_0)$. \textcolor{red}{Let $\tau\in\mathcal M(X)$ be such that $\tau\le\mu$ and $0<\tau(X)\le\varepsilon$. Since $\tau\le\mu$, we have $\overline\tau\le\overline\mu$. Using \eqref{eq:jumps F23} for the last inequality, we get for all $u\in(0,1)$
    \[
    1-F_{\overline\tau/\tau(X)}\left(F_{\overline\mu}^{-1}(1-\tau(X)u)\right)=\frac{\overline\tau((F_{\overline\mu}^{-1}(1-\tau(X)u),+\infty))}{\tau(X)}\le\frac{\overline\mu((F_{\overline\mu}^{-1}(1-\tau(X)u),+\infty))}{\tau(X)}\le u,
    \]
    hence $F_{\overline\tau/\tau(X)}(F_{\overline\mu}^{-1}(1-\tau(X)u))\ge1-u$ and by \eqref{eq:equivalence quantile cdf3}, $F_{\overline\tau/\tau(X)}^{-1}(1-u)\le F_{\overline\mu}^{-1}(1-\tau(X)u)$.
    Using the inverse transform sampling, we deduce
    \begin{align}
    \int_Xd_X^r(x,x_0)\,\tau(dx)&=\tau(X)\int_0^1F_{\overline\tau/\tau(X)}^{-1}(1-u)\,du\notag\\
    &\le\tau(X)\int_0^1F_{\overline\mu}^{-1}(1-\tau(X)u)\,du=\int_{1-\tau(X)}^1F_{\overline\mu}^{-1}(u)\,du\le\int_{1-\varepsilon}^1F_{\overline\mu}^{-1}(u)\,du.\label{majointdrtau}
    \end{align}
Hence $I^r_\varepsilon(\mu)\le \int_{1-\varepsilon}^{1}F_{\overline\mu}^{-1}(u)\,du$ where the right-hand side vanishes as $\varepsilon \to 0$ since, as $\mu\in\mathcal P_r(X)$, $\int_0^{1}F_{\overline\mu}^{-1}(u)\,du=\int_X d_X(x,x_0)^r\,\mu(dx)<+\infty$. Let us  check the equality
    \begin{equation}\label{eq:expression Irepsilon}
    I^r_\varepsilon(\mu)=\int_{1-\varepsilon}^{1}F_{\overline\mu}^{-1}(u)\,du,
    \end{equation}
    that will come in handy for the proof of claim \ref{it:IepsilonrCvx}} \textcolor{purple}{by setting 
  \begin{align}\label{eq:righthand}
   \tau^*(dx)=\left(\mathbbm1_{A_\varepsilon}(x)+\frac{F_{\overline\mu}(y_\varepsilon)-(1-\varepsilon)}{\mu(B_\varepsilon)}\mathbbm1_{B_\varepsilon}(x)\right)\,\mu(dx),
    \end{align}
    where
    \[
    y_\varepsilon=F_{\overline\mu}^{-1}(1-\varepsilon),\quad A_\varepsilon=\{x\in\R\mid d_X^r(x,x_0)>y_\varepsilon\}\quad\text{and}\quad B_\varepsilon=\{x\in\R\mid d_X^r(x,x_0)=y_\varepsilon\},
    \]
    and the second summand of the right-hand side in \eqref{eq:righthand} is taken to be zero if $\mu(B_\varepsilon)=0$. Since $A_\varepsilon\cap B_\varepsilon=\emptyset$ and, by \eqref{eq:jumps F23}, $$\mu(B_\varepsilon)={\overline\mu}(\{y_\varepsilon\})=F_{\overline\mu}(y_\varepsilon)-F_{\overline\mu}(F_{\overline\mu}^{-1}(1-\varepsilon)-)\ge F_{\overline\mu}(y_\varepsilon)-(1-\varepsilon),$$ hence $\tau^*\le\mu$.  Moreover, $\overline{\tau^*}$ is the measure dominated by $\overline{\mu}$ with mass equal to $\varepsilon$ which is the largest in  stochastic order. Indeed, one easily checks that
    \begin{align*}
      &\overline{\tau^*}(dy)=\mathbbm1_{y>y_\varepsilon}\overline{\mu}(dy)+\left(F_{\overline\mu}(y_\varepsilon)-(1-\varepsilon)\right)\delta_{y_\varepsilon}(dy)\mbox{ so that }\overline{\tau^*}(\R)=\varepsilon,\\
      &\forall y\in\R,\,F_{\overline{\tau^*}/\varepsilon}(y)=\mathbbm1_{y\ge y_\varepsilon}\frac{F_{\overline{\mu}}(y)-(1-\varepsilon)}{\varepsilon}\mbox{ and }\forall u\in (0,1),\,F^{-1}_{\overline{\tau^*}/\varepsilon}(1-u)=F_{\overline{\mu}}^{-1}(1-\varepsilon u).
    \end{align*}
With the inverse transform sampling, the latter equality implies that \begin{align*}
   \int_Xd_X^r(x,x_0)\,\tau^*(dx)=\varepsilon\int_0^1F^{-1}_{\overline{\tau^*}/\varepsilon}(u)du=\varepsilon\int_0^1F_{\overline{\mu}}^{-1}(1-\varepsilon u)du
=\int_{1-\varepsilon}^1F_{\overline\mu}^{-1}(u)\,du\end{align*} so that \eqref{eq:expression Irepsilon} holds.}

    To see \ref{it:Iespilonr}, fix $\mu' \in \mathcal M(X)$ with $\mu(X) = \mu'(X)$.
    We denote by $\pi(dx,dx')=\mu(dx)\,\pi_{x}(dx') \in \Pi(\mu,\mu')$ a $\mathcal W_r$-optimal coupling.
    Let $\tau\in\mathcal M(X)$ be such that $\tau\leq\mu$ and $\tau(X)\le\epsilon$. Let $\tau'\in\mathcal M(X)$ be defined by
    \[
    \tau'(dx')=\int_{x\in X}\pi_{x}(dx')\,\tau(dx).
    \]
    Since $\pi$ is element of $\Pi(\mu,\mu')$, we find $\tau'\leq\mu'$ and $\tau(X)=\tau'(X)$. Then
    \begin{align*}
      \int_X d_X^r(x,x_0) \, \tau(dx) &\leq 2^{r-1} \int_{X \times X} \left(d_X^r(x',x_0) + d_X^r(x,x')\right) \, \pi_{x}(dx')\, \tau(dx) \\
      &\leq 2^{r-1} \left( I_\epsilon^r(\mu') + \int_{X\times X }d_X^r(x,x') \, \pi(dx,dx')\right),
    \end{align*}
    which shows by optimality of $\pi$ the assertion.

    We now show \ref{it:uniform integrability}. Let $\mu,\mu^k\in\mathcal M_r(X)$ be with equal masses such that $\mu^k$ converges weakly to $\mu$. According to \eqref{scali}, we may suppose that $\mu,\mu^k\in\mathcal P_r(X)$.

    Suppose that $\mathcal W_r(\mu^k,\mu)$ vanishes as $k$ goes to $+\infty$. Then the sequence of the $r$-th moments of $\mu^k$, $k\in\N$ is bounded since it converges to the $r$-th moment of $\mu$. Let $\eta>0$. Let $k_0\in\N$ be such that for all $k>k_0$, $\mathcal W_r^r(\mu^k,\mu)<\eta$. Then \ref{it:Iespilonr} yields for $\varepsilon>0$
    \[
    \sup_{k\in\N}I^r_\varepsilon(\mu^k)\le\sum_{k\le k_0}I^r_\varepsilon(\mu^k)+\sup_{k>k_0}I^r_\varepsilon(\mu^k)\le\sum_{k\le k_0}I^r_\varepsilon(\mu^k)+2^{r-1}(I^r_\varepsilon(\mu)+\eta).
    \]
    According to \ref{it:IepsilonrVanishes} we then get
    \[
    \limsup_{\varepsilon\to0}\sup_{k\in\N}I^r_\varepsilon(\mu^k)\le2^{r-1}\eta.
    \]
    Since $\eta>0$ is arbitrary, we deduce that $\sup_{k\in\N}I^r_\varepsilon(\mu^k)$ vanishes with $\varepsilon$.

    Conversely, suppose that $\sup_{k\in\N}I^r_\varepsilon(\mu^k)$ vanishes with $\varepsilon$ and the sequence of the $r$-th moments of $\mu^k$, $k\in\N$ is bounded. By Skorokhod's representation theorem, there exist random variables $X$ and $X^k$, $k\in\N$, defined on a common probability space such that $X$, resp.\ $X^k$ is distributed according to $\mu$, resp.\ $\mu^k$ and $X^k$ converges almost surely to $X$. Then for all $M>0$,
    \begin{equation*}
    \mathcal W_r^r(\mu^k,\mu)\le\E[d_X^r(X^k,X)]=\E[d_X^r(X^k,X)\mathbbm1_{\{d_X^r(X^k,X)<M\}}]+\E[d_X^r(X^k,X)\mathbbm1_{\{d_X^r(X^k,X)\ge M\}}].
    \end{equation*}
    By the dominated convergence theorem, we deduce
    \[
    \limsup_{k\to+\infty}\mathcal W_r^r(\mu^k,\mu)\le\limsup_{k\to+\infty}\E[d_X^r(X^k,X)\mathbbm1_{\{d_X^r(X^k,X)\ge M\}}].
    \]
    Let us then prove that the right-hand side vanishes as $M$ goes to $+\infty$. Let $\eta>0$. Let $\varepsilon>0$ be such that $I_\varepsilon^r(\mu)+\sup_{k\in\N}I_\varepsilon^r(\mu^k)<\eta$. By Markov's inequality, we have
    \begin{equation*}
    \sup_{k\in\N}\E[\mathbbm1_{\{d_X^r(X^k,X)\ge M\}}]\le\sup_{k\in\N}\frac{\E[d_X^r(X^k,X)]}{M}\le\frac{2^{r-1}}{M}\sup_{k\in\N}\int_X d_X^r(x,x_0)\,(\mu^k+\mu)(dx),
    \end{equation*}
    where the right-hand side vanishes as $M$ goes to $+\infty$. Therefore, there exists $M_0>0$ such that for all $k\in\N$ and $M>M_0$,
    \begin{align*}
    \E[d_X^r(X^k,X)\mathbbm1_{\{d_X^r(X^k,X)\ge M\}}]&\le2^{r-1}\left(\E[d^r_X(X^k,x_0)\,\mathbbm1_{\{d_X^r(X^k,X)\ge M\}}]+\E[d^r_X(x_0,X)\mathbbm1_{\{d_X^r(X^k,X)\ge M\}}]\right)\\
    &\le 2^{r-1}\left(I^r_\varepsilon(\mu^k)+I^r_\varepsilon(\mu)\right)<2^{r-1}\eta.
    \end{align*}
   Therefore, for all $M>M_0$,
    \[
    \limsup_{k\to+\infty}\E[d_X^r(X^k,X)\mathbbm1_{\{d_X^r(X^k,X)\ge M\}}]\le2^{r-1}\eta.
    \]
    Since $\eta$ is arbitrary, this proves the assertion.

    Finally, we want to show \ref{it:IepsilonrCvx}. Let $X = \R^d$ and $\mu \leq_c \nu$ with $\nu\in\mathcal M_1(\R^d)$. According to \eqref{scali}, we may suppose that $\mu,\nu\in\mathcal P_1(\R^d)$. Again, we write $\overline{\mu}$ and $\overline{\nu}$ for the pushforward measures of $\mu$ and $\nu$ under the map $(x \mapsto |x-x_0|^r)$.
    First, we note that $\overline{\mu}$ is dominated by $\overline{\nu}$ in the increasing convex order.
    Indeed, let $f \in C(X)$ be convex and nondecreasing, then $x \mapsto f(|x-x_0|^r)$ constitutes a convex, continuous function.
    Thus,
    \[
      \int_\R f(y) \, \overline{\mu}(dy) = \int_{\R^d} f(|x-x_0|^r) \, \mu(dx) \leq \int_{\R^d} f(|x-x_0|^r) \, \nu(dx) = \int_\R f(y) \, \overline{\nu}(dy).
    \]
    The convex increasing order is characterised by the following family of inequalities (see for instance \cite[Theorem 2.4]{AlCoJo17a}): for all $0 \leq \varepsilon \leq 1$,
    \[
      \int_{1-\varepsilon}^{1} F_{\overline{\mu}}^{-1}(y) \, dy \leq \int_{1-\varepsilon}^{1} F_{\overline{\nu}}^{-1}(y) \, dy.
    \]
    The identity \eqref{eq:expression Irepsilon} concludes the proof.
  \end{proof}

We now prove Proposition \ref{prop:adapted approximation dim1}. A handy tool in the construction of the approximative sequence $(\pi^k)_{k\in \N}$ are copulas.
Recall that a two-dimensional copula is an element $C$ of $\Pi(\lambda,\lambda)$ where $\lambda$ is the uniform distribution on $(0,1)$.
A coupling $\pi$ is an element of $\Pi(\mu,\nu)$ if and only if it can be written as the push-forward of a copula $C$ under the quantile map $(F_\mu^{-1}, F_\nu^{-1}) \colon  (0,1) \times (0,1) \to \R \times \R$.
Clearly, if $C$ is a copula then $\pi = (F_\mu^{-1}, F_\nu^{-1})_\ast C$ is contained in $\Pi(\mu,\nu)$.
On the other hand, if $\pi \in \Pi(\mu,\nu)$ is given, we can construct a copula $C$ by
\[
C(du,dv) = \mathbbm{1}_{(0,1)}(u) \, du \, C_u(dv),
\]
where $C_u$ is given by
\begin{equation}\label{eq:def Copula}
C_u = ((y,w) \mapsto F_\nu(y-) + w \nu(\{y\}))_\ast (\pi_{F_\mu^{-1}(u)} \times \lambda).
\end{equation}
In particular, we have that $u\mapsto C_u$ is constant on the jumps on $F_\mu$.
\textcolor{blue}{The fact that the second marginal distribution of $C$ is indeed uniformly distributed on $(0,1)$ is a direct consequence of the inverse transform sampling and the well-known result (see for instance \cite[Lemma 6.6]{JoMa18} for a proof) that for any $\eta\in\mathcal P(\R)$,
\begin{equation}\label{eq:pushforward uniform}
((z,w) \mapsto F_\eta(z-) + w \eta(\{z\}))_\ast (\eta \times \lambda) = \lambda.
\end{equation}}
\textcolor{purple}{Finally, we check the identity $\pi = (F_\mu^{-1}, F_\nu^{-1})_\ast C$.
Let $w \in (0,1]$ and continue by distinguishing two cases:
On the one hand, if $\nu(\{y\}) > 0$ then we have by \eqref{eq:jumps F3}
\begin{equation} \label{eq:ForMargsOfCopula}
	F_\nu^{-1}(F_\nu(y-) + w \nu(\{y\}))=y.
\end{equation}
On the other hand, we derive from \eqref{eq:Fminus of F3} that \eqref{eq:ForMargsOfCopula} holds for $\nu$-almost every $y \in \{ z \in \R \colon \nu(\{ z \}) = 0 \}$. 
Hence, we obtain for $\lambda$-almost every $u \in (0,1)$
\begin{equation}\label{eq:piQuantile=Quantile C}
\pi_{F_\mu^{-1}(u)}=(F_\nu^{-1})_\ast C_u
\end{equation}
and conclude with $\pi= (F_\mu^{-1}, F_\nu^{-1})_\ast C$.}
\begin{proof}[Proof of Proposition \ref{prop:adapted approximation dim1}]
	Because of homogeneity of the $\mathcal{AW}_r$- and $\mathcal W_r$-distances, we can suppose w.l.o.g. that $\mu,\mu^k,\nu,\nu^k$ and $\pi$ are probability measures. Let $C$ be the copula defined by $C(du,dv)=\mathbbm1_{(0,1)}(u)\,du\,C_u(dv)$, where $C_u$ is given by \eqref{eq:def Copula}.

	In order to define $\pi^k$, we construct associated copulas $C^k$ where $u\mapsto C^k_u$ is constant on the jumps of $F_{\mu^k}$.
	Let
	\begin{gather*}
	\theta_k \colon \R \times (0,1) \to (0,1),\ (x,w) \mapsto F_{\mu^k}(x-) + w \mu^k( \{ x \} ), \\
	C_u^k(dv) = \int_{w = 0}^1 C_{\theta_k\left(F_{\mu^k}^{-1}(u),w\right)}(dv) \, dw, \\
	\pi^k = (F_{\mu^k}^{-1}, F_{\nu^k}^{-1})_\ast C^k = (F_{\mu^k}^{-1}, F_{\nu^k}^{-1})_\ast (\mathbbm{1}_{(0,1)}(u) \, du \, C_u^k(dv)).
	\end{gather*}
	The fact that $C^k$ is a copula, and therefore $\pi^k\in\Pi(\mu^k,\nu^k)$, is a direct consequence of \eqref{eq:pushforward uniform} and the inverse transform sampling. Since $u \mapsto C_u$ and $u\mapsto C^k_u$ are constant on the jumps of $F_\mu$ and $F_{\mu^k}$ respectively, reasoning like in the derivation of \eqref{eq:piQuantile=Quantile C}, we have for $du$-almost every $u$ in $(0,1)$
	\[
	\pi_{F_{\mu}^{-1}(u)} = (F_{\nu}^{-1})_\ast C_u, \quad \pi^k_{F_{\mu^k}^{-1}(u)} = (F_{\nu^k}^{-1})_\ast C^k_u.
	\]
	Moreover, since $(u \mapsto (F_\mu^{-1}(u),F_{\mu^k}^{-1}(u)))_\ast \lambda$ is a coupling between $\mu$ and $\mu^k$, namely the comonotonous coupling, we have \textcolor{red}{using the definition of $\mathcal{AW}_r(\pi,\pi^k)$ as an infimum over $\Pi(\mu,\mu^k)$, \textcolor{purple}{cf.}\ \eqref{eq:defAW2},}
	\begin{align}\label{eq:pseudo estimate Awr}
	\begin{split}
	\mathcal{AW}_r^r(\pi,\pi^k) & \le \int_0^1\left(\vert F_\mu^{-1}(u) - F_{\mu^k}^{-1}(u) \vert^r + \mathcal W_r^r(\pi_{F_\mu^{-1}(u)}, \pi^k_{F_{\mu^k}^{-1}(u)})\right)\, du \\
	& = \mathcal W_r^r(\mu,\mu^k) + \int_0^1 \mathcal W_r^r \left( (F_\nu^{-1})_\ast C_u, (F_{\nu^k}^{-1})_\ast C^k_u \right) \, du.
	\end{split}
	\end{align}
	By Minkowski's inequality we have
	\begin{align}\label{eq:triangular inequality Wr}
	\begin{split}
	\left( \int_0^1 \mathcal W_r^r \left( (F_\nu^{-1})_\ast C_u, (F_{\nu^k}^{-1})_\ast  C^k_u \right) \, du \right)^{\frac{1}{r}} & \le \left( \int_0^1 \mathcal W_r^r \left( (F_\nu^{-1})_\ast C_u, (F_\nu^{-1})_\ast C^k_u \right) \, du \right)^{\frac{1}{r}} \\
	& \phantom{ \le } + \left( \int_0^1 \mathcal W_r^r \left( (F_\nu^{-1})_\ast C^k_u, (F_{\nu^k}^{-1})_\ast C^k_u \right) \, du \right)^{\frac{1}{r}}.
	\end{split}
	\end{align}
	Since for any $\eta \in \mathcal P(\R)$ the map $F_\eta^{-1} \circ F_{C^k_u}^{-1}$ is non-decreasing, we have (see for instance \cite[Lemma A.3]{AlCoJo17b}) that for $dw$-almost every $w\in(0,1)$,
	\[ F_{\eta}^{-1}(F_{C^k_u}^{-1}(w)) = F_{(F_{\eta}^{-1})_\ast C^k_u}^{-1}(w). \]
	Hence, we deduce
	\begin{align}\label{eq:rewrite Wr}
	\begin{split}
	\int_{(0,1)} \mathcal W_r^r \left( (F_\nu^{-1})_\ast C^k_u, (F_{\nu^k}^{-1})_\ast C^k_u \right) \, du & = \int_{(0,1)}\int_{(0,1)}\vert F_\nu^{-1} (F_{C^k_u}^{-1} (w) ) - F_{\nu^k}^{-1} (F_{C^k_u}^{-1} (w) ) \vert^r \, dw \, du \\
	& = \int_{(0,1)}\int_{(0,1)} \vert F_\nu^{-1} (v) - F_{\nu^k}^{-1} (v) \vert^r \, C^k_u(dv) \, du \\
	& = \int_{(0,1)}\vert F_\nu^{-1} (v) - F_{\nu^k}^{-1} (v) \vert^r \, dv = \mathcal W_r^r(\nu,\nu^k) \to 0,
	\end{split}
	\end{align}
	where we used inverse transform sampling in the second equality.
	At this stage, we can already show \ref{it:adapted approximation dim12} \textcolor{red}{of Proposition \ref{prop:adapted approximation dim1}}.
	Indeed, the assumption made in \ref{it:adapted approximation dim12} ensures that any jump of $F_{\mu_k}$ is included in a jump of $F_\mu$.
	We already noted that $u\mapsto C_u$ is constant on the jumps of $F_\mu$ and therefore also constant on the jumps of $F_{\mu^k}$.
	This yields for all $u,w\in(0,1)$ that $C_{\theta_k(F_{\mu^k}^{-1}(u),w)}=C_u$ and particularly $C^k_u=C_u$, which  causes the first term on the right-hand side of \eqref{eq:triangular inequality Wr} to vanish.
	Then the estimate \eqref{eq:estimate Awr} follows immediately from \eqref{eq:pseudo estimate Awr}, \eqref{eq:triangular inequality Wr} and \eqref{eq:rewrite Wr}.

	To  obtain \ref{it:adapted approximation dim11} and in view of \eqref{eq:pseudo estimate Awr}, \eqref{eq:triangular inequality Wr} and \eqref{eq:rewrite Wr}, it is sufficient to show
	\[ \int_0^1 \mathcal W_r^r \left( (F_\nu^{-1})_\ast C_u, (F_\nu^{-1})_\ast C^k_u \right) \, du \to 0. \]
	This is achieved in two steps:
	First, we show for $du$-almost every $u \in(0,1)$ that
	\begin{equation}\label{eq:pointwise convergence}
	\mathcal W_r((F_\nu^{-1})_\ast C_u, (F_\nu^{-1})_\ast C_u^k) \to 0.
	\end{equation}
	Second, we prove that
	\begin{equation}\label{eq:uniformly integrable}
	u \mapsto \mathcal W_r^r((F_\nu^{-1})_\ast C_u, (F_\nu^{-1})_\ast C_u^k) \quad k \in \N,
	\end{equation}
	is uniformly integrable on $(0,1)$ with respect to $\lambda$.

	To show \eqref{eq:pointwise convergence}, note that $\mathcal W_r$-convergence is already determined by a countable family $\mathcal C \subset \Phi_r(\R)$ (see \cite[Theorem 4.5.(b)]{EtKu}).
	For this reason, it is sufficient to show that for all $f \in \mathcal C$, for $du$-almost every $u\in(0,1)$,
	\begin{equation}\label{eq:Wr Convergence copula}
	\int_{(0,1)} f(F_\nu^{-1}(v)) \, C_u^k(dv) \to g(u):=\int_{(0,1)} f(F_\nu^{-1}(v)) \, C_u(dv),\quad k\to+\infty,
	\end{equation}
	where the integrals are $du$-almost everywhere well defined because of the inverse transform sampling, the fact that $f\in\Phi_r(\R)$ and $\nu\in\mathcal P_r(\R)$. For $u\in(0,1)$, let $x_u = F_\mu^{-1}(u)$ and $x^k_u = F_{\mu^k}^{-1}(u)$. Let $\mathcal U\subset(0,1)$ be the set of continuity points of $F_\mu^{-1}$ and define
	\[
	\mathcal U_c=\{u\in\mathcal U\mid F_\mu\text{ is continuous at }x_u\}\quad\text{and}\quad\mathcal U_d=\{u\in\mathcal U\backslash\mathcal U_c\mid u\in(F_\mu(x_u-),F_\mu(x_u))\}.
	\]
	By monotonicity of $F_\mu^{-1}$, the complement of $\mathcal U$ in $(0,1)$ is at most countable, and since $\mu$ has countably many atoms, the complement of $\mathcal U_d$ in $\mathcal U\backslash\mathcal U_c$ is also at most countable. We deduce that it is sufficient to show \eqref{eq:Wr Convergence copula} for $du$-almost all $u\in\mathcal U_c\cup\mathcal U_d$.
	Let then $u\in\mathcal U$. If $\mu^k(\{x^k_u\})=0$, then $C_u^k = C_u$ and
	\[
	\int_{(0,1)} f(F_\nu^{-1}(v)) \, C_u^k(dv)=g(u).
	\]
	From now on and until \eqref{eq:Wr Convergence copula} is proved, we suppose w.l.o.g.\ that $\mu^k(\{x^k_u\})>0$ for all $k\in\N$. Then
	\begin{equation}\label{eq:Wr Copula}
	\int_{(0,1)} f(F_\nu^{-1}(v)) \, C_u^k(dv) =\frac{1}{\mu^k(\{ x_u^k \})}\int_{F_{\mu^k}(x_u^k - )}^{F_{\mu^k}(x_u^k)} g(w) \, dw .
	\end{equation}
	Define $l_k = \inf_{n \geq k} x_u^n$ and $r_k = \sup_{n \geq k} x_u^n$.
	Since $u \in \mathcal U$ we find $l_k \nearrow x_u$ and $r_k \searrow x_u$ when $k$ goes to $+\infty$.
	Due to right continuity of $F_\mu$ and left continuity of $x \mapsto F_\mu(x-)$ we have
	\[
	F_\mu(x_u-) = \lim_p F_\mu(l_p -)\quad\text{and}\quad\lim_p F_\mu(r_p) = F_\mu(x_u).
	\]
	By Portmanteau's theorem and monotonicity of cumulative distribution functions we have
	\[ F_\mu(l_p-) \leq \liminf_k F_{\mu^k}(l_p -)\le\liminf_k F_{\mu^k}(x^k_u-)\le\limsup_k F_{\mu^k}(x^k_u) \leq \limsup_k F_{\mu^k}(r_p) \leq F_\mu(r_p). \]
	By taking the limit $p\to+\infty$, we find
	\begin{equation}\label{eq:upper and lower bound Muk}
	F_\mu(x_u-)\leq \liminf_k F_{\mu^k}(x^k_u-) \leq \limsup_k F_{\mu^k}(x_u^k) \leq F_\mu(x_u).
	\end{equation}
	By \eqref{eq:jumps F23}, the interval $[F_{\mu^k}(x^k_u-),F_{\mu^k}(x^k_u)]$ contains $u$, and if $u\in\mathcal U_c$, then \eqref{eq:upper and lower bound Muk} implies that its length $\mu^k(\{x^k_u\})$ vanishes when $k$ goes to $+\infty$. Consequently, \eqref{eq:Wr Copula} and the Lebesgue differentiation theorem yield that for $du$-almost every $u\in\mathcal U_c$,
	\[ \int_{(0,1)} f(F_\nu^{-1}(v)) \, C_u^k(dv) \to g(u). \]
	Suppose now $u\in\mathcal U_d$ and define
	\[
	a_k = F_{\mu^k}(x_u^k-) \vee F_\mu(x_u-),\quad b_k = F_{\mu^k}(x_u^k) \wedge F_\mu(x_u).
	\]
	Note that on the interval $(a_k,b_k)$ the function $g$ is constant equal to $g(u)$, so \eqref{eq:Wr Copula} amounts to
	\begin{align*}
	\int_{(0,1)} f(F_\nu^{-1}(v)) \, C_u^k(dv) = \frac{1}{\mu^k(\{x_u^k\})} \left( \int_{b_k}^{F_{\mu^k}(x_u^k)} g(w) \, dw + \int_{a_k}^{b_k} g(u) \, dw + \int_{F_{\mu^k}(x_u^k-)}^{a_k} g(w) \, dw \right).
	\end{align*}
	According to \eqref{eq:upper and lower bound Muk},
	\begin{equation}\label{eq:convergence ak bk}
	a_k - F_{\mu^k}(x_u^k-)\to0\quad\text{and}\quad F_{\mu^k}(x_u^k) - b_k\to0,\quad k\to+\infty.
	\end{equation}
	Moreover, having \eqref{eq:jumps F23} in mind it is clear that
	\begin{gather}
		\label{eq:cases in prop 2.1}
		\begin{split}
	F_{\mu^k}(x^k_u-)<a_k \implies\mu^k(\{x^k_u\})\ge u-F_\mu(x_u-),\\
	\textrm{and}\quad b_k<F_{\mu^k}(x^k_u) \implies\mu^k(\{x^k_u\})\ge F_\mu(x_u)-u.
		\end{split}
	\end{gather}
Using the latter fact and the equality
\[
b_k-a_k=\mu_k(\{x^k_u\})-(F_{\mu_k}(x^k_u)-b_k)-(a_k-F_{\mu_k}(x^k_u-)),
\]
we get
\[
1-\frac{F_{\mu_k}(x^k_u)-b_k}{F_\mu(x_u)-u}-\frac{a_k-F_{\mu_k}(x^k_u-)}{u-F_\mu(x_u-)}\le\frac{b_k-a_k}{\mu_k(\{x^k_u\})}\le1.
\]
Hence by \eqref{eq:convergence ak bk} we have $\frac{b_k-a_k}{\mu_k(\{x^k_u\})}\to1$ as $k$ goes to $+\infty$, which implies that $\frac{1}{\mu^k(\{x^k_u\})}\int_{a_k}^{b_k}g(u)\,dw\to g(u)$ as $k\to+\infty$. Therefore, we just have to show that
\begin{equation}\label{eq:copula caseUd}
\frac{1}{\mu^k(\{x_u^k\})} \left( \int_{b_k}^{F_{\mu^k}(x_u^k)} g(w) \, dw + \int_{F_{\mu^k}(x_u^k-)}^{a_k} g(w) \, dw \right)\to0,\quad k\to+\infty.
\end{equation}
	Note that we can assume w.l.o.g.\ that for all $k\in\N$ either $F_{\mu^k}(x^k_u-) < a_k$ or $b_k < F_{\mu^k}(x^k_u)$.
	Let $d=(u-F_\mu(x_u-))\wedge(F_\mu(x_u)-u)$, which is positive since $u\in\mathcal U_d$. Then we have by \eqref{eq:cases in prop 2.1}
	\begin{align}\label{eq:copula caseUd 2}
    \begin{split}
      \frac{1}{\mu^k(\{x_u^k\})} \left| \int_{b_k}^{F_{\mu^k}(x_u^k)} g(w) \, dw + \int_{F_{\mu^k}(x_u^k-)}^{a_k} \right. & g(w) \, dw \Bigg| \\
      & \le \frac{1}{d} \left|\int_{b_k}^{F_{\mu^k}(x_u^k)} g(w) \, dw + \int_{F_{\mu^k}(x_u^k-)}^{a_k} g(w) \, dw \right|.
    \end{split}
	\end{align}
	By the inverse transform sampling and the facts that $f\in\Phi_r(\R)$ and $\nu\in\mathcal P_r(\R)$, we have $\int_0^1\vert g(w)\vert\,dw=\int_\R\vert f(y)\vert\,\nu(dy)<+\infty$. Then \eqref{eq:copula caseUd} is a direct consequence of \eqref{eq:copula caseUd 2}, \eqref{eq:convergence ak bk} and the dominated convergence theorem. Hence \eqref{eq:pointwise convergence} is proved for $du$-almost every $u\in(0,1)$.

	Next, we show uniform integrability of \eqref{eq:uniformly integrable}.
	We can estimate
	\begin{align*}
	\mathcal W_r^r((F_\nu^{-1})_\ast C_u, (F_\nu^{-1})_\ast C_u^k) \leq 2^{r-1} \left( \int_{(0,1)} |F_\nu^{-1}(v)|^r \, C_u(dv) + \int_{(0,1)} |F_\nu^{-1}(v)|^r \, C_u^k(dv) \right).
	\end{align*}
	Since by the inverse transform sampling we have
	\[ \int_{(0,1)}\int_{(0,1)} |F_\nu^{-1}(v)|^r \,C_u(dv) \, du = \int_\R \vert y \vert^r \, \nu(dy) < \infty, \]
	it is enough to show uniform integrability of $u \mapsto \int_{(0,1)} |F_\nu^{-1}(v)|^r \, C_u^k(dv)$, $k\in \N$.

	On the one hand, using the inverse transform sampling and $\nu\in\mathcal P_r(\R)$, we have
	\[
	\textcolor{blue}{\forall k\in\N,}\;\int_{(0,1)}\int_{(0,1)}\vert F_\nu^{-1}(v)\vert^r C^k_u(dv)\,du=\int_\R\vert y\vert^r\,\nu(dy)<+\infty.
	\]
	On the other hand, let $\epsilon > 0$ and $A$ be a measurable subset of $(0,1)$ such that $\lambda(A) < \epsilon$. We have
	\[
	\int_A \int_{(0,1)} |F_\nu^{-1}(v)|^r \, C_u^k(dv) \, du = \int_\R |y|^r \, (F_\nu^{-1})_\ast \tau^k(dy),
	\]
	where $\tau^k(dv) = \int_{u = 0}^1 \mathbbm{1}_A(du) \, C_u^k(dv) \, du$.
	Note that $\tau^k \leq \lambda$, $(F_\nu^{-1})_\ast \tau^k \leq \nu$ and  $(F_\nu^{-1})_\ast \tau^k(\R) =  \tau^k((0,1)) =  \lambda(A)$. Therefore,
	\begin{align*}
	\sup_{\substack{A \in \mathcal B((0,1)), \\ \lambda(A) \leq \epsilon}} \sup_k \int_A \int_{(0,1)} |F_\nu^{-1}(v)|^r \, C_u^k(dv) \, du \leq I_\epsilon^r(\nu),
	\end{align*}
	where $I^r_\varepsilon(\nu)$ is defined by \eqref{eq:def Iepsilonr}. By Lemma \ref{lem:uniform integrability}, the right-hand side converges to $0$ with $\epsilon \to 0$. This yields uniform integrability of \eqref{eq:uniformly integrable}, which completes the proof.
\end{proof}

As mentioned in Section \ref{sec:main results}, Proposition \ref{prop:adapted approximation dim1} generalises to Polish spaces. Unsurprisingly, the proof of Proposition \ref{prop:adapted approximation} requires radically different tools from its unidimensional equivalent. In particular, we need to 
recall the so-called Weak Optimal Transport (WOT) problem introduced by Gozlan, Roberto, Samson and Tetali \cite{GoRoSaTe17} and studied in \cite{GoRoSaSh18}. Let $C:X\times\mathcal P_r(Y)\to\R_+$ be nonnegative, continuous, strictly convex in the second argument and such that there exists a constant $K>0$ which satisfies
\begin{equation}\label{eq:condition cost function}
\forall(x,p)\in X\times\mathcal P_r(Y),\quad C(x,p)\le K\left(1+d_X^r(x,x_0)+\int_Yd_Y^r(y,y_0)\,p(dy)\right).
\end{equation}
Then the WOT problem consists in minimising
\begin{equation}\label{WOT}
\tag{WOT}
V_C(\mu,\nu):=\inf_{\pi\in\Pi(\mu,\nu)}\int_X C(x,\pi_x)\,\mu(dx).
\end{equation}
In view of the definition \eqref{eq:AW=W} of the adapted Wasserstein distance which involves measures on the extended space $X\times\mathcal P(Y)$, it is natural to consider an extension of \eqref{WOT} which also involves this space. Hence we also consider the extended problem
\begin{equation}\label{WOT'}
\tag{WOT'}
V'_C(\mu,\nu):=\inf_{P\in\Lambda(\mu,\nu)}\int_{X\times\mathcal P(Y)}C(x,p)\,P(dx,dp),
\end{equation}
where $\Lambda(\mu,\nu)$ is the set of couplings between $\mu$ and an arbitrary measure on $\mathcal P(Y)$ with mean $\nu$, that is
\begin{equation}\label{def:big lambda}
\Lambda(\mu,\nu)=\left\{P\in\mathcal P(X\times\mathcal P(Y))\mid\int_{(x',p)\in X\times\mathcal P(Y)}\delta_{x'}(dx)\,p(dy)\,P(dx',dp) \in \Pi(\mu,\nu)\right\}.
\end{equation}

\begin{remark} \label{rem:WOT results}
	We gather here useful results on weak transport problems which hold under the standing  assumptions on $C$:
	\begin{enumerate}[label = (\alph*)]
		\item \label{it:owt1} \textcolor{purple}{according to \cite[Theorem 1.2]{BaBePa18} and the paragraph following this theorem,} \eqref{WOT} admits a unique minimiser $\pi^*$; 
		\item \label{it:owt3}As a consequence of the necessary optimality condition  \cite[Theorem 2.2]{BaPa19}, 
		$J(\pi^*)$ is the only minimiser of \eqref{WOT'}. \textcolor{blue}{Indeed, if we assume the opposite then there is a minimizer $P^\ast \in \Lambda(\mu,\nu)$ of \eqref{WOT'} which does not lie in the image of $\Pi(\mu,\nu)$ under $J$. Hence, any measurable set $\mathcal A \subset X \times \mathcal P_r(Y)$ with $P^\ast(\mathcal A) = 1$ contains $(x,p), (x,q) \in \mathcal A$ with $p \neq q$. Due to strict convexity of $C$ in its second argument, we find
		\[	
			C\left( x, \frac{p + q}{2} \right) < \frac{1}{2} \left( C(x,p) + C(x,q) \right).
		\]
		Since $\mathcal A$ was an arbitrary set supporting $P^\ast$, the strict inequality above contradicts the necessary optimality condition in \cite[Theorem 2.2]{BaPa19}};
		\item \label{it:owt4} $V(\mu,\nu)=V'(\mu,\nu)$ \cite[Lemma 2.1]{BaBePa18};
		\item \label{it:owt5} Stability of \eqref{WOT} and \eqref{WOT'}: Let $\mu^k\in\mathcal P_r(X),\nu^k\in\mathcal P_r(Y)$, $k\in\N$ converge respectively to $\mu\in\mathcal P_r(X)$ and $\nu\in\mathcal P_r(Y)$ in $\mathcal W_r$. For $k\in\N$, let $\pi^k\in\Pi(\mu^k,\nu^k)$ be optimal for $V(\mu^k,\nu^k)$. Then $\pi^k$, resp.\ $J(\pi^k)$, converges to the unique minimiser $\pi^*$, resp.\ $J(\pi^*)$, in $\mathcal W_r$ \cite[Theorem 1.3 and Corollary 2.8]{BaPa19}. In particular, this shows that $\pi^k$ converges to $\pi^*$ even in $\mathcal{AW}_r$.
	\end{enumerate}
\end{remark}

\begin{proof}[Proof of Proposition \ref{prop:adapted approximation}]
\textcolor{blue}{Since $\nu\in\mathcal P_r(Y)$, we have that
	\begin{equation}\label{pixinPrae}
	 \int_X\int_Yd_Y^r(y,y_0)\,\pi_x(dy)\,\mu(dx)=\int_Yd_Y^r(y,y_0)\,\nu(dy)<+\infty,
	 \end{equation}
	 hence up to a modification on a $\mu$-null set, we can suppose w.l.o.g. that for all $x\in X$, $\pi_x\in\mathcal P_r(Y)$. Let $\varepsilon>0$ and $y_0\in Y$. Define for $R > 0$ the $\mathcal W_r$-open ball $B_R$ of radius $R^{1/r}$ and centre $\delta_{y_0}$ and the set
	 \[
	 A_R=\{x\in X\mid\pi_x\in B_R\}=\left\{x\in X \;\middle|\;\int_Yd_Y^r(y,y_0)\,\pi_x(dy)<R\right\}.
	 \]
	}
\textcolor{blue}{
	 By \eqref{pixinPrae} again, $\mu$ is concentrated on $\bigcup_{R > 0}A_R$ and we can choose $R$ large enough such that
	 \[
	 \mu(X\backslash A_R)<\epsilon.
	 \]
}
\textcolor{red}{Since $\mu$ is a probability measure on the Polish space $X$, it is a Radon measure. Moreover, $\mathcal P_r(Y)$ endowed with $\mathcal W_r$ is a separable metric space, hence it is second-countable. Therefore we can apply Lusin's theorem to the map $X\ni x\mapsto\pi_x\in\mathcal P_r(Y)$ in order to deduce the existence of 	
%
%
	a closed set $F\subset A_R$ such that
	\[
	\mu(X\backslash F)< \epsilon  \quad\text{and}\quad x\mapsto\pi_x\text{ restricted to $F$ is continuous}.
	\]
}
	Let $\widetilde{\mathcal M}_r(Y)$ be the linear space of all finite signed measures on $Y$, the positive and negative parts of which are contained in $\mathcal M_r(Y)$, equipped with the weak topology induced by $\Phi_r(Y)$. Since weak topologies are locally convex, an extension of Tietze's theorem \cite[Theorem 4.1]{Du51} yields the existence of a continuous map $x\mapsto\overline\pi_x$ defined on $X$ with values in $\widetilde{\mathcal M}_r(Y)$ such that $\overline\pi_x=\pi_x$ for all $x\in F$ and
	\[
	\{\overline\pi_x\mid x\in X\}\subset\operatorname{co}\{\pi_x\mid x\in F\}\subset B_R,
	\]
	where $\operatorname{co}$ denotes the convex hull.
	
	Next, we define a nonnegative, continuous, strictly convex in the second argument function which satisfies a condition of the form \eqref{eq:condition cost function} in order to use the results on weak transport problems detailed in Remark \ref{rem:WOT results}. Let $\{ g_k\mid k \in \N\}\subset \Phi_1(Y)$ be a family of $1$-Lipschitz continuous functions and absolutely bounded by $1$, which separates $\mathcal P(Y)$ (see \cite[Theorem 4.5.(a)]{EtKu}).
	We have for any pair $p,p' \in \mathcal P(Y)$, $p \neq p'$ that there is $l \in \N$ such that
	\begin{align}\label{eq:p neq q}
	\int_Y g_l(y) \, p(dy) \neq \int_Y g_l(y) \, p'(dy).
	\end{align}
	Define $C:X\times\mathcal P_r(Y)\to\R_+$ for all $(x,p)\in X\times\mathcal P_r(Y)$ by
	\[
	C(x,p):=\rho(\overline\pi_x,p)+\sum_{k \in \N} \frac{1}{2^k} \left\vert\int_Y g_k(y)\,\overline\pi_x(dy)-\int_Yg_k(y)\,p(dy)\right\vert^2,
	\]
	where $\rho:\mathcal P(Y)\times\mathcal P(Y)\to[0,1]$ is defined for all $p,p'\in\mathcal P(Y)$ by
	\[
	\rho(p,p')=\inf_{\chi\in\Pi(p,p')}\int_{Y\times Y}(d_Y(y,y')\wedge1)\,\chi(dy,dy').
	\]
	Since $\rho$ can be interpreted as a Wasserstein distance with respect to a bounded distance, it is immediate that it is a metric on $\mathcal P(Y)$ which induces the weak convergence topology.
	On the one hand, the map $(x,p)\mapsto\rho(\overline\pi_x,p)$ is continuous by continuity of $x\mapsto\overline\pi_x$.
	On the other hand, by Kantorovich and Rubinstein's duality theorem and Jensen's inequality, we have for all $(x,p),(x',p')\in X\times\mathcal P_r(Y)$
	\begin{align*}
	&\sum_{k \in \N} \frac{1}{2^k} \left| \left \vert \int_Y g_k(y) \, \overline \pi_x(dy) - \int_Y g_k(y) \, p(dy)\right \vert^2 -\left\vert\int_Y g_k(y) \, \overline\pi_{x'}(dy)-\int_Y g_k(y) \, p'(dy)\right\vert^2 \right| \\
	=&\sum_{k \in \N} \frac{1}{2^k} \left|\int_Y g_k(y) \, \overline \pi_x(dy) - \int_Y g_k(y) \, p(dy)+\int_Y g_k(y) \, \overline\pi_{x'}(dy)-\int_Y g_k(y) \, p'(dy) \right|\\
	&\phantom{\sum_{k\in\N}\ }\times\left|\int_Y g_k(y) \, \overline \pi_x(dy) - \int_Y g_k(y) \, p(dy)-\int_Y g_k(y) \, \overline\pi_{x'}(dy)+\int_Y g_k(y) \, p'(dy) \right|\\
	\le& \sum_{k \in \N} \frac{4}{2^k} \left( \left \vert \int_Y g_k(y) \, \overline\pi_x(dy)-\int_Yg_k(y)\,\overline\pi_{x'}(dy)\right\vert+\left\vert\int_Yg_k(y)\,p(dy)-\int_Yg_k(y)\,p'(dy)\right\vert\right) \\
	\le& 8\left(\mathcal W_1(\overline\pi_x,\overline\pi_{x'})+\mathcal W_1(p,p')\right)  \le 8\left(\mathcal W_r(\overline\pi_x,\overline\pi_{x'})+\mathcal W_r(p,p')\right),
	\end{align*}
	where the right-hand side vanishes when $(x',p')$ converges to $(x,p)$ by continuity of $x\mapsto\overline\pi_x$. We deduce that $C$ is continuous.
	
	Note that $\rho$ is convex in the second argument.
	Therefore, to obtain strict convexity of $C(x,\cdot)$ in the second argument, it is sufficient to verify that
	\[ F(p) = \sum_{k \in \N} \frac{1}{2^k}  \left \vert \int_Y g_k(y) \, p(dy)\right|^2 \]
	is strictly convex.
	Let $p,p' \in \mathcal P(Y)$, $p \neq p'$ and $l \in \N$ such that \eqref{eq:p neq q} holds.
	Hence, strict convexity of the square proves
	\[ \left| \alpha \int_Y g_l(y)\, p(dy) + (1 - \alpha) \int_Y g_l(y) \, p'(dy) \right|^2 < \alpha \left|  \int_Y g_l(y)\, p(dy) \right|^2 + (1 - \alpha) \left|  \int_Y g_l(y)\, p'(dy) \right|^2, \]
	which yields strict convexity of $F$ on $\mathcal P(Y)$.
	
	Moreover, we have for all $(x,p)\in X\times\mathcal P_r(Y)$, $C(x,p)\le1+8=9$, hence $C$ satisfies \eqref{eq:condition cost function}.
	Remember the definitions of $V_C$ and $V'_C$ given in \eqref{WOT} and \eqref{WOT'}.
	Since for all $x\in F$, $C(x,\pi_x)=C(x,\overline\pi_x)=0$, we have
	\[
	V_C(\mu,\nu) \leq \int_{X \backslash F} C(x,\pi_x)\, \mu(dx)<9\varepsilon.
	\]
	Let $\pi^{*,\varepsilon}\in\Pi(\mu,\nu)$ be optimal for $V_C(\mu,\nu)$. For $P,P'\in\mathcal P(X\times\mathcal P(Y))$, let
	\[
	\tilde\rho(P,P')=\inf_{\chi\in\Pi(P,P')}\int_{X\times\mathcal P(Y)\times X\times\mathcal P(Y)}\left(\left(d_X(x,x')+\rho(p,p')\right)\wedge1\right)\,\chi(dx,dp,dx',dp').
	\]
	Since $\mu(dx)\,\delta_{\pi_x}(dp)\,\delta_{x}(dx')\,\delta_{\pi^{*,\varepsilon}_{x'}}(dp')$ is a coupling between $J(\pi)$ and $J(\pi^{*,\varepsilon})$, we can estimate
	\begin{align*}
	\tilde\rho(J(\pi),J(\pi^{*,\varepsilon}))&\le\int_X\rho(\pi_x,\pi^{*,\varepsilon}_x)\,\mu(dx)\\
	&\le\int_F\rho(\pi_x,\pi^{*,\varepsilon}_x)\,\mu(dx)+\int_{X\setminus F}\int_Y(d_Y(y,y_0)\wedge1)\,(\pi_x+\pi^{*,\varepsilon}_x)(dy)\,\mu(dx)\\
	&\le V_C(\mu,\nu)+2\varepsilon<11\varepsilon.
	\end{align*}
	For $k\in\N$, let $\pi^{k,\varepsilon}\in\Pi(\mu^k,\nu^k)$ be optimal for $V_C(\mu^k,\nu^k)$.
	Then $J(\pi^{k,\varepsilon})$ is optimal for $V'_C(\mu^k,\nu^k)$ by Remark \ref{rem:WOT results} \ref{it:owt3}, and converges to $J(\pi^{*,\varepsilon})$ in $\mathcal W_r$ and therefore weakly by Remark \ref{rem:WOT results} \ref{it:owt5}. Then we get
	\begin{equation}\label{eq:limsup Awr pikepsilon}
	\limsup_{k\to+\infty}\tilde\rho(J(\pi^{k,\varepsilon}),J(\pi)) \\ \le \limsup_{k \to + \infty}\left(\tilde\rho(J(\pi^{k,\varepsilon}),J(\pi^{*,\varepsilon}))+\tilde\rho(J(\pi^{*,\varepsilon}),J(\pi))\right)\le11\varepsilon. 
	\end{equation}	
	So far $\epsilon > 0$ was arbitrary.
	Therefore, there exists a strictly increasing sequence $(k_N)_{N\in\N^*}$ of positive integers such that
	\[ \forall N \in\N^*,\quad\forall k \ge k_N, \quad \tilde\rho(J(\pi^{k,\textcolor{red}{1/N}}),J(\pi)) \le \textcolor{red}{\frac{12}{N}}.
	\]	
	For $k\in\N$, let $N_k = \max \{ N \in \N^* \mid k \geq k_N \}$, where the maximum of the empty set is defined as $1$.
	Since $(k_N)_{N\in\N^*}$ is strictly increasing, we find that $N_k \to +\infty$ as $k \to +\infty$.
	Then the sequence of couplings
	\[ \pi^k = \pi^{k,1/N_k}\in\Pi(\mu^k,\nu^k),~k\in\N \]
	is such that $\tilde\rho(J(\pi^k),J(\pi))$ vanishes as $k$ goes to $+\infty$, and therefore $J(\pi^k)$ converges weakly to $J(\pi)$. Moreover, since $\mathcal W_r$-convergence is equivalent to weak convergence coupled with convergence of the $r$-moments, we have that the $r$-moments of $\mu^k$ and $\nu^k$ respectively converge to the $r$-moments of $\mu$ and $\nu$, which implies
	\begin{align*}
	&\int_{X\times\mathcal P(Y)}\left(\textcolor{red}{d_X^r(x,x_0)}+\mathcal W_r^r(p,\delta_{y_0})\right)\,J(\pi^k)(dx,dp)=\int_X\left(\textcolor{red}{d_X^r(x,x_0)}+\mathcal W_r^r(\pi^k_x,\delta_{y_0})\right)\,\mu^k(dx)\\&=\int_X\textcolor{red}{d_X^r(x,x_0)}\,\mu^k(dx)+\int_Yd_Y^r(y,y_0)\,\nu^k(dy)\underset{k\to+\infty}{\longrightarrow}\int_X\textcolor{red}{d_X^r(x,x_0)}\,\mu(dx)+\int_Yd_Y^r(y,y_0)\,\nu(dy)\\
	&\phantom{=\int_Xd_X^r(x,x_0)\,\mu^k(dx)+\int_Yd_Y^r(y,y_0)\,\nu^k(dy)}=\int_{X\times\mathcal P(Y)}\left(\textcolor{red}{d_X^r(x,x_0)}+\mathcal W_r^r(p,\delta_{y_0})\right)\,J(\pi)(dx,dp).
	\end{align*}	
	We deduce that $J(\pi^k)$ converges to $J(\pi)$ in $\mathcal{W}_r$ as $k\to+\infty$. According to \eqref{eq:AW=W}, $\pi^{k,\varepsilon}$ converges to $\pi^{*,\varepsilon}$ in $\mathcal{AW}_r$, which concludes the proof.
\end{proof}
	 In the proof of Theorem~\ref{thm:1} we need to be able to confine approximative sequences of couplings to certain sets. The next result provides all necessary tools for this.

	 \begin{lemma}\label{lem:subprob convergence}
     Let $\mu,\mu^k\in\mathcal M_r(X)$, $\nu,\nu^k\in\mathcal M_r(Y)$, $k\in\N$ all with equal masses and $\pi^k \in \Pi(\mu^k,\nu^k)$, $k\in\N$, converge to $\pi \in \Pi(\mu,\nu)$ in $\mathcal{AW}_r$. \textcolor{red}{Let also $A\subset X$ be measurable and $B\supset A$ be open.}
     \begin{enumerate}[label = (\roman*)]
       \item \label{it:subprob convergence1}
        
        There are $\tilde \mu^k \leq \mu^k|_B$ and $\epsilon_k\ge0$, $k\in\N$ such that \textcolor{red}{$\tilde \mu^k(B)=(1-\epsilon_k)\mu(A)$} and $\tilde \pi^k:=\tilde \mu^k\times  \pi^k_x$ satisfies
       \[
        \mathcal{AW}_r(\tilde\pi^k,(1-\epsilon_k)\pi|_{A\times Y})+\epsilon_k\underset{k\to+\infty}{\longrightarrow}0.
       \]

       \item \label{it:subprog convergence2}
        Let $C\subset \textcolor{red}{Y}$ be an open set on which $\nu$ is concentrated.
        There are $\hat \mu^k \leq \tilde \mu^k$, $\hat\nu^k\le\nu^k$, $\hat\pi^k=\hat\mu^k\times\hat\pi^k_x\in\Pi(\hat\mu^k,\hat\nu^k)$ concentrated on $B\times C$ and $\epsilon'_k\ge 0$, $k\in\N$ such that
        \[
          \mathcal{AW}_r^r(\hat\pi^k,(1-\epsilon'_k)\pi\vert_{A\times Y})+\int_X\mathcal W_r^r(\hat\pi^k_x,\pi^k_x)\,\hat\mu^k(dx)+\varepsilon'_k\underset{k\to+\infty}{\longrightarrow}0.
        \]
     \end{enumerate}
	\end{lemma}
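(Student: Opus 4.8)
The plan is to deduce both items from a single \emph{restriction stability} principle for $\mathcal W_r$ and then to chase it through the embedding $J$.

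\textbf{The principle.} Suppose $\rho^k\to\rho$ in $\mathcal W_r$ on a Polish space $(Z,d_Z)$, let $A\subseteq Z$ be measurable and $B\supseteq A$ open, assume $\rho(A)>0$ (the other case being trivial), pick $\gamma^k\in\Pi(\rho^k,\rho)$ optimal for $\mathcal W_r$, and let $\tilde\rho^k$, resp.\ $\sigma^k$, be the first, resp.\ second, marginal of $\gamma^k|_{B\times A}$. Then $\tilde\rho^k\le\rho^k|_B$, $\sigma^k\le\rho|_A$, these two have common mass, and, setting $\epsilon_k:=1-\tilde\rho^k(Z)/\rho(A)$, one has $\epsilon_k\to0$ and $\mathcal W_r(\tilde\rho^k,(1-\epsilon_k)\rho|_A)\to0$. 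For $\epsilon_k\to0$: the lost mass is $\gamma^k((Z\setminus B)\times A)$; for $\delta>0$ put $A_\delta:=\{z\in A\mid d_Z(z,Z\setminus B)>\delta\}$, so $\gamma^k((Z\setminus B)\times A_\delta)\le\delta^{-r}\mathcal W_r^r(\rho^k,\rho)\to0$, while $\rho(A\setminus A_\delta)\to0$ as $\delta\to0$ because $B$ is open and $A\subseteq B$. For the convergence: $\mathcal W_r^r(\tilde\rho^k,\sigma^k)\le\int d_Z^r\,d(\gamma^k|_{B\times A})\le\mathcal W_r^r(\rho^k,\rho)\to0$, while $\sigma^k$ and $(1-\epsilon_k)\rho|_A$ have the same mass, are both $\le\rho|_A$, hence differ from their common lower envelope $\rho|_A-\max(\rho|_A-\sigma^k,\epsilon_k\rho|_A)$ by two pieces of mass $\le 2\epsilon_k\rho(A)$ dominated by $\rho|_A$; coupling these pieces through a base point bounds $\mathcal W_r^r(\sigma^k,(1-\epsilon_k)\rho|_A)$ by $2^r I^r_{2\epsilon_k\rho(A)}(\rho|_A)$, which vanishes by Lemma~\ref{lem:uniform integrability}\ref{it:IepsilonrVanishes}.

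\textbf{Item \ref{it:subprob convergence1}.} I would apply the principle with $Z=X\times\mathcal P_r(Y)$ (metrised by $(d_X^r+\mathcal W_r^r)^{1/r}$), $\rho^k=J(\pi^k)$, $\rho=J(\pi)$ — legitimate since $\mathcal{AW}_r(\pi^k,\pi)=\mathcal W_r(J(\pi^k),J(\pi))\to0$ by \eqref{eq:AW=W} and these measures lie in $\mathcal M_r(Z)$ — and with the open set $B\times\mathcal P(Y)\supseteq A\times\mathcal P(Y)$. Since $J(\pi^k)$ is carried by the graph $\{(x,\pi^k_x)\mid x\in X\}$, so is its submeasure $\tilde\rho^k$; hence $\tilde\rho^k=J(\tilde\pi^k)$ with $\tilde\pi^k=\tilde\mu^k\times\pi^k_x$ for a uniquely determined $\tilde\mu^k\le\mu^k|_B$. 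As $J(\pi)|_{A\times\mathcal P(Y)}=J(\pi|_{A\times Y})$, the conclusion of the principle reads exactly $\mathcal{AW}_r(\tilde\pi^k,(1-\epsilon_k)\pi|_{A\times Y})+\epsilon_k\to0$. Translating $\sigma^k$ back to $X$, the proof also produces a coupling $\tilde\chi^k\in\Pi(\tilde\mu^k,\sigma^k)$ with $\sigma^k\le\mu|_A$ and $\int_{X\times X}\big(d_X^r(x,x')+\mathcal W_r^r(\pi^k_x,\pi_{x'})\big)\,\tilde\chi^k(dx,dx')\le\mathcal W_r^r(J(\pi^k),J(\pi))\to0$; this is reused below.

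\textbf{Item \ref{it:subprog convergence2}.} I would start from $\tilde\pi^k=\tilde\mu^k\times\pi^k_x$ of \ref{it:subprob convergence1} and write $q^k(x):=\pi^k_x(Y\setminus C)$. Set $\hat\mu^k(dx):=(1-q^k(x))\,\tilde\mu^k(dx)\le\tilde\mu^k$, $\hat\pi^k_x:=\pi^k_x|_C/\pi^k_x(C)$ where defined, and $\hat\pi^k:=\hat\mu^k\times\hat\pi^k_x$: it is carried by $B\times C$, has first marginal $\hat\mu^k\le\tilde\mu^k$ and second marginal $\hat\nu^k$ equal to the $Y$-marginal of $(\tilde\mu^k\times\pi^k_x)|_{X\times C}\le\nu^k$. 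The discarded mass is $\int q^k\,\tilde\mu^k\le\int q^k\,\mu^k=\nu^k(Y\setminus C)$, and this $\to0$ by the same $\delta$-neighbourhood argument applied to an optimal $\beta^k\in\Pi(\nu^k,\nu)$ (which charges no $y'\notin C$ since $\nu(Y\setminus C)=0$), so $\epsilon'_k:=1-\hat\mu^k(X)/\mu(A)\to0$. For the kernel term, the coupling of $\hat\pi^k_x$ with $\pi^k_x$ keeping the common part $(1-q^k(x))\hat\pi^k_x$ fixed gives $\mathcal W_r^r(\hat\pi^k_x,\pi^k_x)\le 2^{r-1}\big(\int_{Y\setminus C}d_Y^r(y,y_0)\,\pi^k_x(dy)+q^k(x)\int_C d_Y^r(y,y_0)\,\hat\pi^k_x(dy)\big)$; integrating against $\hat\mu^k\le\mu^k$, the first summand contributes $\int_{Y\setminus C}d_Y^r(y,y_0)\,\nu^k(dy)\le\sup_j I^r_{\nu^k(Y\setminus C)}(\nu^j)\to0$ by Lemma~\ref{lem:uniform integrability}\ref{it:uniform integrability}, and the second is handled by a Markov splitting of $\{q^k>\delta\}$ together with boundedness of the $r$-th moments of $\nu^k$ and Lemma~\ref{lem:uniform integrability}; hence $\int_X\mathcal W_r^r(\hat\pi^k_x,\pi^k_x)\,\hat\mu^k(dx)\to0$. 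Finally, thinning $\tilde\chi^k$ to $\hat\chi^k\in\Pi(\hat\mu^k,\hat\sigma^k)$ with $\hat\sigma^k\le\mu|_A$ and $\int(d_X^r+\mathcal W_r^r(\pi^k_x,\pi_{x'}))\,\hat\chi^k\to0$, the triangle inequality for the kernels gives $\mathcal{AW}_r^r(\hat\pi^k,\hat\sigma^k\times\pi_{x'})\le 2^{r-1}\int\mathcal W_r^r(\hat\pi^k_x,\pi^k_x)\,\hat\mu^k(dx)+2^{r-1}\int(d_X^r+\mathcal W_r^r(\pi^k_x,\pi_{x'}))\,\hat\chi^k\to0$, while $\mathcal{AW}_r(\hat\sigma^k\times\pi_{x'},(1-\epsilon'_k)\pi|_{A\times Y})\to0$ because $\hat\sigma^k$ and $(1-\epsilon'_k)\mu|_A$ share the kernel $x'\mapsto\pi_{x'}$, are $\mathcal W_r$-close equal-mass submeasures of $\mu|_A$, and the map $x'\mapsto\int d_Y^r(y,y_0)\,\pi_{x'}(dy)$ pushes the discarded vanishing mass into a measure dominated by $\nu$, so Lemma~\ref{lem:uniform integrability} again applies. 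The three measures having common mass $\hat\mu^k(X)$, the triangle inequality for $\mathcal{AW}_r$ concludes.

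\textbf{Main obstacle.} The delicate point is the mass bookkeeping: cutting-and-renormalising never lands exactly on a scalar multiple of the reference measure, and one must show each discrepancy is a measure of \emph{vanishing mass that remains dominated by a fixed $\mathcal M_r$-measure} ($\rho|_A$, $\mu|_A$, or $\nu$), so that its $r$-th moment vanishes by Lemma~\ref{lem:uniform integrability}\ref{it:IepsilonrVanishes}. Feeding this is the one geometric input — that an open $B\supseteq A$ forces, via the $\delta$-neighbourhood splitting and the $\mathcal W_r^r$-bound, almost all transported mass to remain inside $B$ — which is the real heart of the argument; the remainder is careful $L^r$ triangle-inequality juggling.
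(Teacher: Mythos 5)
Your proof is correct. For part \ref{it:subprob convergence1} the core is the same as the paper's: restrict the $\mathcal{AW}_r$-optimal coupling of the first marginals (equivalently, the $\mathcal W_r$-optimal coupling of $J(\pi^k)$ and $J(\pi)$) to $B\times A$, take its marginals, and compare the second marginal with $(1-\epsilon_k)\mu\vert_A$ through their common part plus two leftover pieces of vanishing mass whose $r$-th moments are killed by Lemma \ref{lem:uniform integrability} \ref{it:IepsilonrVanishes}. The one genuine deviation is how $\epsilon_k\to0$ is obtained: the paper first proves $\mathcal W_r$-convergence of $\chi^k\vert_{X\times A}$ to $(\operatorname{id},\operatorname{id})_\ast\mu\vert_{X\times A}$ (via a renormalising homeomorphism and \cite[Lemma 2.1]{La18} for test functions continuous in one variable) and then invokes Portmanteau on the open set $B\times B$, whereas your Markov/$\delta$-neighbourhood estimate $\gamma^k((Z\setminus B)\times A_\delta)\le\delta^{-r}\mathcal W_r^r(\rho^k,\rho)$ combined with $\rho(A\setminus A_\delta)\to0$ (valid because $Z\setminus B$ is closed and $A\subset B$) is more elementary and self-contained. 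For part \ref{it:subprog convergence2} your route is genuinely different: the paper keeps $\hat\mu^k$ a scalar multiple of $\tilde\mu^k$ and manufactures the new kernels by composing $\pi^k_x$ with two auxiliary $\mathcal W_r$-optimal couplings between second marginals ($\tilde\nu^k\to(1-\epsilon_k)\tilde\nu\to\tilde\nu^k\vert_C$), so that $\int_X\mathcal W_r^r(\hat\pi^k_x,\pi^k_x)\,\hat\mu^k(dx)$ is bounded directly by the two transport costs; you instead condition each kernel on $C$ and discard the fraction $q^k(x)=\pi^k_x(Y\setminus C)$ of the first-marginal mass. Your construction is simpler to state and trivially yields $\hat\nu^k\le\nu^k$ and concentration on $B\times C$, at the price of the extra splitting of $\{q^k>\delta\}$ and the uniform-integrability statement of Lemma \ref{lem:uniform integrability} \ref{it:uniform integrability} to control the kernel term; both variants deliver all the properties required by the statement and by the later use of the lemma in the proof of Theorem \ref{thm:1}.
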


	\begin{proof}
		\textcolor{red}{To give the reader some guidance we first give an informal description of the strategy of the proof: In order to find $(\tilde \pi^k)_{k \in \N}$ and $(\hat \pi^k)_{k \in \N}$, we first pick, for $k\in\N$, optimizers $\chi^k \in \Pi(\mu^k,\mu)$ for $\mathcal{AW}_r(\pi^k,\pi)$.
	Denote by  $\tilde \pi^k$ the composition of the first marginal of $\chi^k|_{B \times A}$ with the kernel $(\pi^k_x)_{x \in X}$.
		By approximation arguments we will then  deduce that $\tilde \pi^k$ has the desired properties.
		In the last step, we adequately modify $\tilde \pi^k$ to a coupling $\hat \pi^k$ with second marginal concentrated on $C$.}

    Both assertions are trivial if $\mu(A) = 0$ \textcolor{red}{(and also when $A=X$)}.
    So assume that $\mu(A) > 0$.
    \begin{enumerate}[label = (\roman*), wide]
    \item Let $\chi^k \in \Pi(\mu^k,\mu)$ be optimal for $\mathcal{AW}_r(\pi^k,\pi)$ and \textcolor{red}{$\tilde \mu^k$ be the first marginal of $\chi^k|_{B\times A}$, $k\in\N$. We set $\tilde \pi^k=\tilde \mu^k\times\pi^k_x$ and \begin{equation}\label{eq:def epsilonk}
      \varepsilon_k=1-\frac{\chi^k(B\times A)}{\chi^k(X\times A)}=1-\frac{\tilde\mu^k(X)}{\mu(A)}\cdot
      \end{equation}
Let us prove that $\varepsilon_k$ goes to $0$ as $k\rightarrow\infty$ before checking that the same holds for $\mathcal{AW}_r(\tilde\pi^k,(1-\epsilon_k)\pi|_{A\times Y})$. \\ Let $\chi = (\operatorname{id},\operatorname{id})_\ast \mu$.} Since $\chi^k(dx_1,dx_2)\, \delta_{(x_2,x_2)}(dx_3,dx_4)$ defines a coupling in $\Pi(\chi^k,\chi)$, we find
      \begin{align*}
        \mathcal{W}_r^r (\chi^k,\chi) & \le \int_{X^4} (d_X(x_1,x_3)^r + d_X(x_2,x_4)^r) \, \chi^k(dx_1,dx_2) \, \delta_{(x_2,x_2)}(dx_3,dx_4) \\
        & = \int_{X\times X} d_X(x_1,x_2)^r \, \chi^k(dx_1,dx_2) \le \mathcal{AW}_r^r(\pi^k,\pi) \to 0, \quad k\to+\infty.
      \end{align*}
      Further, let $P:\mathcal P_r(X\times X)\to\mathcal P(X\times X)$ be the homeomorphism given by
  		\begin{align*}
  		P(\eta)(dx_1,dx_2)=\frac{(1+d_X(x_1,x_0)^r+d_X(x_2,x_0)^r)\,\eta(dx_1,dx_2)}{\int_{X\times X}(1+d_X(x'_1,x_0)^r+d_X(x'_2,x_0)^r)\,\eta(dx'_1,dx'_2)},
  		\end{align*}
      for $\eta\in\mathcal P_r(X\times X)$.
      Recall \eqref{eq:convergencePr}, then it is easy to deduce that $P(\eta') \to P(\eta)$ weakly \textcolor{red}{if and only if} $\eta' \to \eta$ in $\mathcal W_r$.
      In particular, we find that $P(\chi^k) \to P(\chi)$ weakly as $k$ goes to $+\infty$.
      Let $f\in\Phi_r(X\times X)$ and
      \[
        \varphi \colon X\times X \colon(x_1,x_2) \mapsto \frac{\mathbbm 1_{X\times A}(x_1,x_2) f(x_1,x_2)}{1+d_X(x_1,x_0)^r+d_X(x_2,x_0)^r}.
      \]
      Then $\varphi$ is a bounded measurable map which is continuous w.r.t.\ the first coordinate.
      As a consequence of \cite[Lemma 2.1]{La18}, we find
      \[
        \int_{X\times X} \varphi(x_1,x_2) \, P(\chi^k)(dx_1,dx_2) \to \int_{X\times X} \varphi(x_1,x_2) \, P(\chi)(dx_1,dx_2),\quad k\to+\infty,
      \]
      which amounts to
      \[
        \int_{X\times X} f(x_1,x_2) \, \chi^k \vert_{X\times A} (dx_1,dx_2) \to \int_{X\times X} f(x_1,x_2) \, \chi \vert_{X\times A} (dx_1,dx_2),\quad k\to+\infty.
      \]
      Therefore \eqref{eq:convergencePr} yields $\mathcal W_r$-convergence of $\chi^k|_{X\times A}$ to $\chi|_{X\times A}$.
      By Portmanteau's theorem, we have
      \[
      \chi^k(B\times A)\le\textcolor{red}{\chi^k(X\times A)=}\mu(A)=\chi\vert_{X\times A}(B\times B)\le\liminf_{k\to+\infty}\chi^k\vert_{X\times A}(B\times B)=\liminf_{k\to+\infty}\chi^k(B\times A),
      \]
\textcolor{red}{By the first equality in \eqref{eq:def epsilonk}, we deduce that $\epsilon_k$, $k\in\N$ is a null sequence of nonnegative real numbers.} We \textcolor{red}{now} want to show that
      \begin{align}\label{eq:subprob convergence3}
          \mathcal{AW}_r(\tilde \mu^k \times \pi^k_x, (1- \epsilon_k) \mu\vert_A \times \pi_x) \to 0.
      \end{align}
      On the one hand, \textcolor{red}{denoting by $\bar\mu^k$ the second marginal of $\chi^k|_{B\times A}$, we have that}
      \begin{align} \label{eq:subprob convergence1}
        \begin{split}
        \mathcal{AW}_r^r(\tilde\mu^k \times \pi^k_x, \bar \mu^k \times \pi_x) & \le \int_{X\times X}\left(d_X^r(x,x') + \mathcal W_r^r (\pi^k_x,\pi_{x'})\right) \chi^k \vert_{B\times A} (dx,dx')\\
        & \le \int_{X\times X} \left(d_X^r(x,x') + \mathcal W_r^r (\pi^k_x,\pi_{x'})\right) \chi^k (dx,dx')\\
        & = \mathcal{AW}_r^r (\pi^k, \pi) \to 0,\quad k\to+\infty.
      \end{split}
  		\end{align}
      On the other hand, let
      \[\check \mu^k = (1 - \epsilon_k) \mu\vert_A,\quad\zeta^k = \check \mu^k \wedge \bar \mu^k\quad\text{and}\quad \alpha_k = \bar \mu^k(X) - \zeta^k(X) = \check \mu^k(X) - \zeta^k(X).
      \]
      Let $\overline\chi^k\in\Pi(\bar\mu^k-\zeta^k,\check\mu^k-\zeta^k)$ be optimal for $\mathcal{AW}_r^r((\bar\mu^k-\zeta^k)\times\pi_x,(\check\mu^k-\zeta^k)\times\pi_x)$. Since $((\operatorname{id},\operatorname{id})_\ast\zeta^k+\overline\chi^k)$ is a coupling between $\bar\mu^k$ and $\check\mu^k$, we find
      \begin{align*}
          \mathcal{AW}_r(\bar\mu^k \times \pi_x, \check \mu^k \times \pi_x)&\le\int_X\left(d_X^r(x,x')+\mathcal W_r^r(\pi_x,\pi_{x'})\right)\,\overline\chi^k(dx,dx') \\
          &=\mathcal{AW}_r^r((\bar\mu^k-\zeta^k)\times\pi_x,(\check\mu^k-\zeta^k)\times\pi_x)\\
          &\leq \mathcal{AW}_r((\bar \mu^k - \zeta^k) \times \pi_x, \alpha_k \delta_{(x_0,y_0)}) + \mathcal{AW}_r((\check \mu^k - \zeta^k) \times \pi_x, \alpha_k \delta_{(x_0,y_0)}).
      \end{align*}
      In the next estimates we use \eqref{eq:def Iepsilonr}.
      Note that the first marginal of $(\bar \mu^k - \zeta^k) \times \pi_x$ is dominated by $\mu$ whereas its second marginal is dominated by $\nu$.
      Thus, denoting $\tau^k(dy)=\int_X\pi_x(dy)\,(\bar \mu^k - \zeta^k)(dx)$, we find
      \begin{align*}
        \mathcal{AW}_r^r((\bar \mu^k - \zeta^k) \times \pi_x, \alpha_k \delta_{(x_0,y_0)}) &= \int_X\left(d_X^r(x,x_0) +\mathcal W_r^r(\pi_x,\delta_{y_0})\right)\,(\bar \mu^k - \zeta^k)(dx) \\
        &=\int_X d_X^r(x,x_0)\,(\bar \mu^k - \zeta^k)(dx)+\int_Yd_Y^r(y,y_0)\,\tau^k(dy)\\
        &\leq I_{\alpha_k}^r(\mu) + I_{\alpha_k}^r(\nu).
      \end{align*}
     Similarly, we find
      \[
        \mathcal{AW}_r^r((\check \mu^k - \zeta^k) \times \pi_x, \alpha_k \delta_{(x_0,y_0)}) \leq I_{\alpha_k}^r(\mu) + I_{\alpha_k}^r(\nu).
      \]
      If we can show that $\alpha_k$ vanishes for $k\to+\infty$, then we find by Lemma \ref{lem:uniform integrability} \ref{it:IepsilonrVanishes} that
      \begin{align}\label{eq:subprob convergence2}
        \mathcal{AW}_r(\bar\mu^k \times \pi_x, \check \mu^k \times \pi_x)\underset{k\to+\infty}{\longrightarrow}0,
      \end{align}
      and the triangle inequality together with \eqref{eq:subprob convergence1} and \eqref{eq:subprob convergence2} yield the assertion, \eqref{eq:subprob convergence3}.

      Since $\check\mu^k,\bar\mu^k\le\mu\vert_A$, the  densities of $\check\mu^k$ and $\bar\mu^k$ with respect to $\mu\vert_A$ satisfy $\frac{d\check\mu^k}{d\mu\vert_A},\frac{d\bar\mu^k}{d\mu\vert_A} \leq 1$. Then we conclude by
      \begin{align*}
       \alpha_k = \bar\mu^k(X) - \zeta^k(X) &= \int_A\left(\frac{d\bar\mu^k}{d\mu\vert_A}(x)-\frac{d\check\mu^k}{d\mu\vert_A}(x)\right)^+\,\mu(dx)\le\int_A\left(1-\frac{d\check\mu^k}{d\mu\vert_A}(x)\right)\,\mu(dx)\\
       &=\mu(A)-\check\mu^k(A)=\varepsilon_k\mu(A)\underset{k\to+\infty}{\longrightarrow}0.
      \end{align*}

    \item 
      \textcolor{red}{Let $\tilde \nu^k$ and $\tilde \nu$ denote the second marginals of $\tilde \mu^k \times \pi^k_x$ and $\mu\vert_A \times \pi_x$ respectively. Since $\mu\vert_A\times\pi_x\le\mu\times\pi_x$ with the second marginal $\nu$ of the right-hand side concentrated on $C$, $\tilde\nu$ is concentrated on $C$ and $\tilde\nu(C)=\mu(A)$. In a similar way, since $\tilde\mu^k\times\pi^k_x\le\mu^k\times\pi^k_x$, we have $\tilde\nu^k\le\nu^k$. In order to modify $\tilde \mu^k \times \pi^k_x$ into a coupling with second marginal concentrated on $C$, we consider $\tilde \mu^k \times \mathring\pi^{k}_x$ with $\mathring\pi^{k}_x(dz) = \int_{Y} \mathring\chi^{k}_y(dz)\,\pi^k_x(dy)$ where the coupling $\mathring\chi^k\in\Pi(\tilde \nu^k,(1-\epsilon_k)\tilde \nu)$ is $\mathcal W_r$-optimal. To enable comparison of the second marginal with $\nu^k$ as in the statement, we take advantage of the inequality $\tilde\nu^k\le\nu^k$ and introduce $\tilde \mu^k \times \hat \pi^{k}_x$ with $\hat \pi^{k}_x(dt) = \int_Y \hat\chi^{k}_z(dt)\,\mathring\pi^{k}_x(dz)$ where the coupling $\hat\chi^k\in\Pi((1-\epsilon_k)\tilde \nu,(1-\epsilon_k)\frac{\tilde \nu(C)}{\tilde \nu^k(C)}\tilde \nu^k|_{C})$ is $\mathcal W_r$-optimal. The second marginal of $\textcolor{purple}{\hat \pi^k} = \tilde \mu^k \times \hat \pi^{k}_x$ is $(1-\epsilon_k)\frac{\tilde \nu(C)}{\tilde \nu^k(C)}\tilde \nu^k|_{C}$. By the equality $\tilde\nu(C)=\mu(A)$ and \eqref{eq:def epsilonk} for the equality then the definition of $\tilde\nu^k$ for the inequality, one has $$(1-\epsilon_k)\frac{\tilde \nu(C)}{\tilde \nu^k(C)}=\frac{\tilde\mu^k(X)}{\tilde \nu^k(C)}\ge 1.$$
Setting $\hat \mu^k =  \frac{\tilde \nu^k(C)}{\tilde \mu^k(X)}\tilde \mu^k \leq \tilde \mu^k$ then ensures that the second marginal $\hat\nu^k=
\tilde \nu^k|_{C}$ of $\hat \mu^k\times\hat\pi^k_x$ is both concentrated on $C$ and not greater than $\nu^k$. Moreover, $\hat\nu^k(C)=\hat\nu^k(Y)=\hat \mu^k(X)\le \tilde \mu^k(X)$ with the right-hand side not greater than $\mu(A)$ by \eqref{eq:def epsilonk}. Hence 
   \begin{equation}\label{eq:def epsilonkprime}
      \epsilon'_k := 1-\frac{\tilde \nu^k(C)}{\mu(A)}\in[0,1]\cdot
      \end{equation}  }
 Then it remains to show that
      \begin{equation}\label{eq:subprob convergence4}
      \mathcal{AW}_r(\hat\pi^k,(1-\epsilon'_k)\pi\vert_{A\times Y})+\int_X\mathcal W_r^r(\hat\pi^k_x,\pi^k_x)\,\hat\mu^k(dx)+\epsilon'_k\underset{k\to+\infty}{\longrightarrow}0.
      \end{equation}
  \end{enumerate}
Since we have
\begin{gather*}
\pi^k_x(dy)\,\mathring\chi^k_y(dz)\in\Pi(\pi^k_x,\mathring\pi^k_x),\quad\mathring\pi^k_x(dz)\,\hat\chi^k_z(dt)\in\Pi(\mathring\pi^k_x,\hat\pi^k_x),\\
\int_{x\in X}\hat\mu^k(dx)\,\pi^k_x(dy)\,\mathring\chi^k_y(dz)=\frac{\tilde\nu^k(C)}{\tilde\mu^k(X)}\mathring\chi^k(dy,dz),\\
\int_{x\in X}\hat\mu^k(dx)\,\mathring\pi^k_x(dz)\,\hat\chi^k_z(dt)=\textcolor{red}{\frac{\tilde\nu^k(C)}{\tilde\mu^k(X)}}\hat \chi^k(dz,dt),
\end{gather*}
we find plugging the expressions \eqref{eq:def epsilonk} and \eqref{eq:def epsilonkprime} that
\begin{align*}
&\mathcal{AW}_r^r(\hat\mu^k\times\pi^k_x,\hat\mu^k\times\hat\pi^k_x)\\
&\le\int_X\mathcal W_r^r(\pi^k_x,\hat\pi^k_x)\,\hat\mu^k(dx)&\\
&\le2^{r-1}\int_X\left(\mathcal W_r^r(\pi^k_x,\mathring\pi^k_x)+\mathcal W_r^r(\mathring\pi^k_x,\hat\pi^k_x)\right)\,\hat\mu^k(dx)\\
&\le2^{r-1}\int_X\left(\int_{Y\times Y}d_Y^r(y,z)\,\pi^k_x(dy)\,\mathring\chi^k_y(dz)+\int_{Y\times Y}d_Y^r(z,t)\,\mathring\pi^k_x(dz)\,\hat\chi^k_z(dt)\right)\,\hat\mu^k(dx)\\
&=2^{r-1}\left(\frac{\tilde\nu^k(C)}{\tilde\mu^k(X)}\int_{Y\times Y}d_Y^r(y,z)\,\mathring\chi^k(dy,dz)+\textcolor{red}{\frac{\tilde\nu^k(C)}{\tilde\mu^k(X)}}\int_{Y\times Y}d_Y^r(z,t)\,\hat \chi^k(dz,dt)\right)\\
&=2^{r-1}\left(\frac{1-\varepsilon'_k}{1-\varepsilon_k}\mathcal W_r^r(\tilde\nu^k,(1-\varepsilon_k)\tilde\nu)+\frac{1}{\mu(A)}\mathcal W_r^r(\tilde\nu^k(C)\tilde\nu,\tilde\nu(C)\tilde\nu^k\vert_C)\right).
\end{align*}
      To see convergence to 0, note that since $\mathcal{AW}_r$ dominates $\mathcal W_r$, we find by continuity of the projection on the second marginal that \eqref{eq:subprob convergence3} implies
      \[ \mathcal W_r(\tilde \nu^k, (1 - \epsilon_k) \tilde \nu) \to 0,\quad k\to+\infty. \]

      Using Portmanteau's theorem and the fact that $(1-\varepsilon_k)\to1$ as $k$ goes to $+\infty$, we have for all nonnegative function $f\in\Phi_r(Y)$
      \[
      \limsup_{k\to+\infty}\tilde\nu^k(\mathbbm1_Cf)\le\limsup_{k\to+\infty}\tilde\nu^k(f)=\tilde\nu(f)=\tilde\nu(\mathbbm1_Cf)\le\liminf_{k\to+\infty}\tilde\nu^k(\mathbbm1_Cf),
      \]
      hence
      \begin{equation}\label{eq:convergence nutildek}
      \tilde\nu^k\vert_C(f)\to\tilde\nu(f),\quad k\to+\infty.
      \end{equation}
      Moreover, \eqref{eq:convergence nutildek} applied with $f=1$ yields $\tilde\nu^k(C)\to\tilde\nu(C)=\mu(A)$ as $k$ goes to $+\infty$, hence $\varepsilon'_k$ vanishes as $k$ goes to $+\infty$ and
      \[\mathcal W_r(\tilde \nu^k(C) \tilde\nu, \tilde \nu(C) \tilde \nu^k\vert_C) \to 0,\quad k\to+\infty. \]
      We deduce that
      \[
      \mathcal{AW}_r^r(\hat\mu^k\times\pi^k_x,\hat\mu^k\times\hat\pi^k_x)\le\int_X\mathcal W_r^r(\pi^k_x,\hat\pi^k_x)\,\hat\mu^k(dx)\to 0,\quad k\to+\infty.
      \]
      On the other hand, \textcolor{red}{by the definition of $\hat\mu^k$ as $\frac{\tilde\nu^k(C)}{\tilde\mu^k(X)}\tilde\mu^k$, \eqref{eq:def epsilonk} and \eqref{eq:def epsilonkprime} we have $\hat\mu^k=\frac{1-\varepsilon_k'}{1-\varepsilon_k}\tilde\mu^k$}, hence
      \[
      \mathcal{AW}_r(\hat\mu^k\times\pi^k_x,(1-\varepsilon'_k)\mu\vert_A\times\pi_x)=\frac{1-\varepsilon'_k}{1-\varepsilon_k}\mathcal{AW}_r(\tilde\mu^k\times\pi^k_x,(1-\varepsilon_k)\mu\vert_A\times\pi_x),
      \]
      where the right-hand side vanishes as $k$ goes to $+\infty$ by the first part. Then \eqref{eq:subprob convergence4} follows by triangle inequality and the latter convergences, which completes the proof.
	\end{proof}
	 The addition of measures is continuous with respect to the weak and Wasserstein topology. More precisely, we have the estimate
	 \[
	 \mathcal W_r^r(\mu+\mu',\nu+\nu')\le\mathcal W_r^r(\mu,\nu)+\mathcal W_r^r(\mu',\nu')
	 \]
	 for all measures $\mu,\mu',\nu,\nu'\in\mathcal P_r(X)$ such that $\mu$ and $\nu$, resp.\ $\mu'$ and $\nu'$ have equal masses.

   When considering the adapted weak topology, the next example disproves a comparable statement.
	 \begin{example}
	 	Let $X = Y = \R$, and $\pi^k = \delta_{\left(\frac{1}{k},1\right)}$, $\chi^k = \delta_{\left(-\frac{1}{k},-1\right)}$, $k\in\N$. Then both sequences are convergent in $\mathcal{AW}_1$, but
	 	$$\mathcal{AW}_1(\pi^k + \chi^k, \delta_{(0,1)} + \delta_{(0,-1)}) = \frac{2}{k} + 2$$
	 	does not vanish.
	 \end{example}
 However, we show in the next \textcolor{red}{lemma} that the addition of measures with respect to the adapted weak topology can still be  considered to be continuous in a certain sense if one of the limits has mass significantly smaller than the other.

\begin{lemma}\label{lem:inequality addition AWr}
  Let $\hat\mu,\hat\mu^k,\hat\nu,\hat\nu^k\in\mathcal M_r(Y)$, $k\in\N$ be with equal masses and $\tilde\mu,\tilde\mu^k,\tilde\nu,\tilde\nu^k\in\mathcal M_r(Y)$, $k\in\N$ be with equal masses smaller than $\varepsilon$.
  Let $\hat\pi^k\in\Pi(\hat\mu^k,\hat\nu^k),\tilde\pi^k\in\Pi(\tilde\mu^k,\tilde\nu^k)$, $k\in\N$, $\hat\pi\in\Pi(\hat\mu,\hat\nu)$ and $\tilde\pi\in\Pi(\tilde\mu,\tilde\nu)$.
  Let  $\mu=\hat\mu+\tilde\mu$ and $\nu=\hat\nu+\tilde\nu$. Then
	\begin{enumerate}[label = (\alph*)]
		\item\label{it:addition Awr} We have for all $k\in\N$
		\begin{align}\label{ineq:addition Awr}
		\begin{split}
		&\mathcal{AW}_r^r(\hat\pi^k+\tilde\pi^k,\hat\pi+\tilde\pi)\\
		&\phantom{\mathcal{AW}}\le\mathcal{AW}_r^r(\hat\pi^k,\hat\pi)+2^{r-1}\left(I_\varepsilon^r(\tilde\mu)+I_\varepsilon^r(\tilde\mu^k)+I_\varepsilon^r(\tilde\nu)+I_\varepsilon^r(\tilde\nu^k) + 2I_\varepsilon^r(\hat\nu) + 2I_\varepsilon^r(\hat\nu^k)\right)\\
		&\phantom{\mathcal{AW}}\le\mathcal{AW}_r^r(\hat\pi^k,\hat\pi)+(2^{r-1})^2\left(\mathcal W_r^r(\tilde\mu^k,\tilde\mu)+\mathcal W_r^r(\tilde\nu^k,\tilde\nu)+2\mathcal W_r^r(\hat\nu^k,\hat\nu)\right)\\
		&\phantom{\mathcal{AW}\le\ }+2^{r-1}(1+2^{r-1})I_\varepsilon^r(\mu)+3\cdot 2^{r-1}(1+2^{r-1})I_\varepsilon^r(\nu),
		\end{split}
		\end{align}
		where $I_\varepsilon^r(\cdot)$ is defined by \eqref{eq:def Iepsilonr}.

		\item\label{it:addition AWr limsup} If $(\hat\pi^k)_{k\in\N}$ converges to $\hat\pi$ in $\mathcal{AW}_r$ and $(\mu^k = \hat\mu^k+\tilde\mu^k)_{k\in\N}$, resp.\ $(\nu^k = \hat\nu^k+\tilde\nu^k)_{k\in\N}$, converges to $\mu$, resp.\ $\nu$, in $\mathcal W_r$, then
		\begin{equation}\label{eq:addition AWr limsup}
		\limsup_{k\to+\infty}\mathcal{AW}_r^r(\hat\pi^k+\tilde\pi^k,\hat\pi+\tilde\pi)\le C(I^r_\varepsilon(\mu)+I^r_\varepsilon(\nu)),
		\end{equation}
		where $C>0$ depends only on $r$.
	\end{enumerate}
\end{lemma}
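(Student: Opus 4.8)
I would prove part~\ref{it:addition Awr} by exhibiting one explicit bicausal coupling of $\hat\pi^k+\tilde\pi^k$ and $\hat\pi+\tilde\pi$ and estimating its cost through the identity $\mathcal{AW}_r^r(\pi,\pi')=\inf_{\eta\in\Pi_{bc}(\pi,\pi')}\int(d_X^r(x,x')+d_Y^r(y,y'))\,\eta$, and then deduce part~\ref{it:addition AWr limsup} from the second inequality of \eqref{ineq:addition Awr} by letting $k\to+\infty$. One may assume the common mass $m:=\tilde\mu(X)\in(0,\varepsilon]$ is positive, the case $m=0$ being trivial.

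\textbf{The construction for \ref{it:addition Awr}.} Fix $k$ and $\delta>0$ and pick $\hat\eta^k=\hat\chi^k(dx,dx')\,\hat\gamma^k_{(x,x')}(dy,dy')\in\Pi_{bc}(\hat\pi^k,\hat\pi)$ with $\int(d_X^r+d_Y^r)\,\hat\eta^k\le\mathcal{AW}_r^r(\hat\pi^k,\hat\pi)+\delta$, where $\hat\chi^k\in\Pi(\hat\mu^k,\hat\mu)$ and $\hat\gamma^k_{(x,x')}\in\Pi(\hat\pi^k_x,\hat\pi_{x'})$. Write $\mu^k=\hat\mu^k+\tilde\mu^k$, $\mu=\hat\mu+\tilde\mu$, set $f^k=\frac{d\hat\mu^k}{d\mu^k}$ and $g=\frac{d\hat\mu}{d\mu}$, so that $(\hat\pi^k+\tilde\pi^k)_x=f^k(x)\hat\pi^k_x+(1-f^k(x))\tilde\pi^k_x$ and $(\hat\pi+\tilde\pi)_{x'}=g(x')\hat\pi_{x'}+(1-g(x'))\tilde\pi_{x'}$. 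Take $\chi:=\hat\chi^k+\tfrac1m\tilde\mu^k\otimes\tilde\mu\in\Pi(\mu^k,\mu)$ and, for $\chi$-a.e.\ $(x,x')$ with $a:=f^k(x)$, $b:=g(x')$, put
\[
\gamma_{(x,x')}:=(a\wedge b)\,\hat\gamma^k_{(x,x')}+\lambda_{(x,x')},
\]
where $\hat\gamma^k_{(x,x')}$ is taken to be the product coupling of $\hat\pi^k_x,\hat\pi_{x'}$ on pairs not charged by $\hat\chi^k$, and $\lambda_{(x,x')}$ is any coupling of the leftover masses $(a-b)^+\hat\pi^k_x+(1-a)\tilde\pi^k_x$ and $(b-a)^+\hat\pi_{x'}+(1-b)\tilde\pi_{x'}$, both of total mass $1-a\wedge b$. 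By construction $\gamma_{(x,x')}\in\Pi((\hat\pi^k+\tilde\pi^k)_x,(\hat\pi+\tilde\pi)_{x'})$, so $\eta:=\chi(dx,dx')\,\gamma_{(x,x')}(dy,dy')$ is a bicausal coupling of the two sums.

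\textbf{The estimate.} On the $\hat\chi^k$-part of $\chi$ the $d_X^r$-term together with the $(a\wedge b)\hat\gamma^k_{(x,x')}$-term is dominated by $\int(d_X^r+d_Y^r)\,\hat\eta^k\le\mathcal{AW}_r^r(\hat\pi^k,\hat\pi)+\delta$. Everything else is handled with $d_X^r(x,x')\le2^{r-1}(d_X^r(x,x_0)+d_X^r(x_0,x'))$ and the analogous bound for $d_Y^r$: the $d_X^r$-cost on $\tfrac1m\tilde\mu^k\otimes\tilde\mu$ contributes $\le2^{r-1}(I_\varepsilon^r(\tilde\mu^k)+I_\varepsilon^r(\tilde\mu))$; in $\lambda_{(x,x')}$ the pieces carrying $\tilde\pi^k_x$ resp.\ $\tilde\pi_{x'}$ marginalise to $\int d_Y^r(y,y_0)\,\tilde\nu^k(dy)=I_\varepsilon^r(\tilde\nu^k)$ resp.\ $I_\varepsilon^r(\tilde\nu)$; and the delicate pieces carrying $\hat\pi^k_x$ (mass $(a-b)^+$) resp.\ $\hat\pi_{x'}$ (mass $(b-a)^+$) contribute $\le2^{r-1}I_\varepsilon^r(\hat\nu^k)$ resp.\ $\le2^{r-1}I_\varepsilon^r(\hat\nu)$, because the associated weight measures $\int(a-b)^+\hat\pi^k_x(dy)\,\chi(dx,dx')$ and $\int(b-a)^+\hat\pi_{x'}(dy')\,\chi(dx,dx')$ are dominated by $\hat\nu^k$ resp.\ $\hat\nu$ and have mass $\le\int(1-g(x'))\,\chi=\int(1-f^k(x))\,\chi=m\le\varepsilon$. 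Finally, the $(a\wedge b)\hat\gamma^k_{(x,x')}$-term on $\tfrac1m\tilde\mu^k\otimes\tilde\mu$ contributes another $\le2^{r-1}(I_\varepsilon^r(\hat\nu^k)+I_\varepsilon^r(\hat\nu))$ via $a\wedge b\le a,b$ and the fact that $f^k\tilde\mu^k\le\hat\mu^k$ and $g\tilde\mu\le\hat\mu$ are measures of mass $\le m\le\varepsilon$. Summing these bounds and letting $\delta\to0$ yields the first inequality of \eqref{ineq:addition Awr}; the second inequality then follows by applying Lemma~\ref{lem:uniform integrability}\ref{it:croisieps} (monotonicity, e.g.\ $I_\varepsilon^r(\tilde\mu)\le I_\varepsilon^r(\mu)$ and $I_\varepsilon^r(\hat\nu)\le I_\varepsilon^r(\nu)$) and Lemma~\ref{lem:uniform integrability}\ref{it:Iespilonr} (e.g.\ $I_\varepsilon^r(\tilde\mu^k)\le2^{r-1}(I_\varepsilon^r(\tilde\mu)+\mathcal W_r^r(\tilde\mu^k,\tilde\mu))$) and collecting coefficients.

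\textbf{Part \ref{it:addition AWr limsup} and the main obstacle.} From $\hat\pi^k\to\hat\pi$ in $\mathcal{AW}_r$ we get $\mathcal{AW}_r^r(\hat\pi^k,\hat\pi)\to0$ and, projecting on the marginals, $\hat\mu^k\to\hat\mu$ and $\hat\nu^k\to\hat\nu$ in $\mathcal W_r$; combined with $\mu^k\to\mu$, $\nu^k\to\nu$ in $\mathcal W_r$, the weak continuity of the difference of finite measures together with convergence of the $r$-th moments gives $\tilde\mu^k=\mu^k-\hat\mu^k\to\tilde\mu$ and $\tilde\nu^k=\nu^k-\hat\nu^k\to\tilde\nu$ in $\mathcal W_r$ (these pairs share the fixed mass $m$). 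Hence every term on the right-hand side of the second inequality of \eqref{ineq:addition Awr} except $I_\varepsilon^r(\mu)$ and $I_\varepsilon^r(\nu)$ vanishes as $k\to+\infty$, which gives \eqref{eq:addition AWr limsup} with $C=3\cdot2^{r-1}(1+2^{r-1})$. The step I expect to be the real obstacle is the construction of $\gamma_{(x,x')}$ and, within the cost estimate, the verification that re-routing the pieces of $\hat\pi^k_x$ and $\hat\pi_{x'}$ costs only $I_\varepsilon^r(\hat\nu^k)$ resp.\ $I_\varepsilon^r(\hat\nu)$: this is precisely where the smallness ($\le\varepsilon$) of the masses of the tilded measures is exploited to control the mismatch between the densities $f^k$ and $g$, which (as the example preceding the lemma shows) genuinely cannot be made to vanish.
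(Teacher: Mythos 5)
Your proposal is correct and follows essentially the same strategy as the paper's proof: the paper also couples the first marginals by an $\mathcal{AW}_r$-optimal $\hat\rho^k\in\Pi(\hat\mu^k,\hat\mu)$ plus an arbitrary $\tilde\rho^k\in\Pi(\tilde\mu^k,\tilde\mu)$, splits the kernels via the densities into a matched part weighted by $\hat p^k(x)\wedge\hat p(x')$ (your $a\wedge b$) and leftover parts weighted by $(\hat p^k(x)-\hat p(x'))^{\pm}$ routed through $\delta_{y_0}$, and bounds each piece by the corresponding $I^r_\varepsilon$ using exactly the domination and small-mass arguments you give. The only differences are cosmetic (bicausal formulation with a $\delta$-optimal coupling instead of the kernel formulation \eqref{eq:defAW2} with an optimal one; the explicit product coupling $\tfrac1m\tilde\mu^k\otimes\tilde\mu$ instead of an arbitrary $\tilde\rho^k$; and for part \ref{it:addition AWr limsup} passing through the second inequality rather than through $I^r_\varepsilon(\mu^k),I^r_\varepsilon(\nu^k)$).
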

\begin{proof}
  The second inequality of \eqref{ineq:addition Awr} is easily deduced from the first one, \eqref{ineq: Iepsilonr} and the fact that $I_\varepsilon^r(\tilde\mu)\le I_\varepsilon^r(\mu)$, $I_\varepsilon^r(\tilde\nu)\le I_\varepsilon^r(\nu)$ and $I_\varepsilon^r(\hat\nu)\le I_\varepsilon^r(\nu)$.

  To see \ref{it:addition AWr limsup}, assume for a moment that the first inequality of \eqref{ineq:addition Awr} holds true and suppose
  \[ \hat\pi^k \to \hat\pi\; \text{in }\mathcal{AW}_r,\quad \mu^k = \hat\mu^k+\tilde\mu^k \to \mu \quad \text{and}\quad \nu^k = \hat\nu^k+\tilde\nu^k\to \nu \; \text{in }\mathcal W_r\]
  as $k\to+\infty$.  Using Lemma \ref{lem:uniform integrability} \ref{it:croisieps} and then \ref{it:Iespilonr}, we obtain
	\begin{align*}
	\limsup_{k\to+\infty}\mathcal{AW}_r^r(\hat\pi^k+\tilde\pi^k,\hat\pi+\tilde\pi)&\le C'\limsup_{k\to+\infty}\left(I^r_\varepsilon(\mu^k)+I_\varepsilon^r(\nu^k)+I^r_\varepsilon(\mu)+I^r_\varepsilon(\nu)\right)\\
	&\le C\limsup_{k\to+\infty}\left(\mathcal W_r^r(\mu^k,\mu)+\mathcal W_r^r(\nu^k,\nu)+I_\varepsilon^r(\mu)+I_\varepsilon^r(\nu)\right)\\
	&=C(I_\varepsilon^r(\mu)+I_\varepsilon^r(\nu)),
	\end{align*}
	where $C,C'>0$ depend only on $r$. Hence \ref{it:addition AWr limsup} is proved.

	To conclude the proof, it remains to show the first inequality in \eqref{ineq:addition Awr}.
  Let $\hat\rho^k\in\Pi(\hat\mu^k,\hat\mu)$ be optimal for $\mathcal{AW}_r(\hat\pi^k,\hat\pi)$ and $\tilde\rho^k\in\Pi(\tilde\mu^k,\tilde\mu)$ be arbitrary.
  We write $\rho^k = \hat \rho^k + \tilde \rho^k$.
  Then
	\begin{equation}\label{eq:lemsum 1}
	\mathcal{AW}_r^r(\hat\pi^k+\tilde\pi^k,\hat\pi+\tilde\pi) \le\int_{X\times X}\left(d_X^r(x,x')+\mathcal W_r^r((\hat\pi^k+\tilde\pi^k)_x,(\hat\pi+\tilde\pi)_{x'})\right)\,\rho^k(dx,dx').
\end{equation}
	Let $\hat p=\frac{d\hat\mu}{d\mu}$ and $\hat p^k=\frac{d\hat\mu^k}{d\mu^k}$.
  Notice that $\hat p$ and $\hat p^k$ take values in $[0,1]$.
  The identities
	\begin{gather*}
    (\hat\pi+\tilde\pi)(dx,dx')=\mu(dx)\,\Big(\hat p(x)\,\hat\pi_x(dx')+(1-\hat p(x))\,\tilde\pi_x(dx')\Big),\\
    (\hat\pi^k+\tilde\pi^k)(dx,dx')=\mu^k(dx)\,\Big(\hat p^k(x)\,\hat\pi^k_x(dx')+(1-\hat p^k(x))\,\tilde\pi^k_x(dx')\Big),
  \end{gather*}
	provide representations for the disintegrations of $(\hat \pi + \tilde \pi)$ and $(\hat \pi^k + \tilde \pi^k)$ respectively for $\mu(dx)$- and $\mu^k(dx)$-almost every $x$:
  \[
    (\hat\pi+\tilde\pi)_x=\hat p(x)\,\hat\pi_x+(1-\hat p(x))\tilde\pi_x,\quad (\hat\pi^k+\tilde\pi^k)_x=\hat p^k(x)\,\hat\pi^k_x+(1-\hat p^k(x))\,\tilde\pi^k_x.
  \]
  Thus, we have when letting $\alpha^k_+(x,x') = (\hat p^k(x) - \hat p(x'))^+$, $\alpha^k_-(x,x') = (\hat p^k(x) - \hat p(x'))^-$ and $\beta^k(x,x') = \hat p^k(x) \wedge \hat p(x')$ that
	\begin{align}\label{eq:lemsum 2}
	\begin{split}
	&\mathcal W_r^r((\hat\pi^k+\tilde\pi^k)_x,(\hat\pi+\tilde\pi)_{x'})\\
	&\phantom{\mathcal W_r^r}\le\mathcal W_r^r(\beta^k(x,x')\,\hat\pi^k_x,\beta^k(x,x')\,\hat\pi_{x'}) \\
  &\phantom{\mathcal W_r^r\le} + \mathcal W_r^r\Big( \alpha^k_+(x,x')\,\hat\pi^k_x+(1-\hat p^k(x))\,\tilde\pi^k_x,\alpha^k_-(x,x')\,\hat\pi_{x'}+(1-\hat p(x'))\,\tilde\pi_{x'}\Big) \\
  &\phantom{\mathcal W_r^r} \le \beta^k(x,x')\mathcal W_r^r(\hat \pi^k_x,\hat \pi_{x'}) + 2^{r-1} \left( \alpha_+^k(x,x')\mathcal W_r^r(\hat \pi^k_x, \delta_{y_0}) + (1 - \hat p^k(x)) \mathcal W_r^r(\tilde \pi^k_x,\delta_{y_0}) \right. \\
  &\phantom{\mathcal W_r^r\le} \left. + \alpha_-^k(x,x') \mathcal W_r^r(\hat \pi_{x'},\delta_{y_0}) + (1 - \hat p(x')) \mathcal W_r^r(\tilde \pi_{x'},\delta_{y_0})\right).
	\end{split}
	\end{align}
  Since $\beta^k(x,x') = \hat p^k(x) \wedge \hat p(x') \leq 1$, we deduce from \eqref{eq:lemsum 1}, \eqref{eq:lemsum 2} and $\mathcal{AW}_r$-optimality of $\hat \rho^k$
  \begin{align} \label{eq:lemsum 3}
  	\begin{split}
    & \mathcal{AW}_r^r(\hat \pi^k + \tilde\pi^k, \hat \pi + \tilde \pi) \le \mathcal{AW}_r^r(\hat \pi^k,\hat \pi) + \int_{X \times X} d_X(x,x')^r \, \tilde \rho^k(dx,dx') \\
    & \phantom{\mathcal{AW}_r^r(\hat \pi^k + \tilde\pi^k, \hat \pi + \tilde \pi) \le \mathcal{A}}
    + 2^{r-1}\int_{X \times X} \hat p^k(x) \mathcal W_r^r(\hat \pi^k_x,\delta_{y_0}) \, \tilde \rho^k(dx,dx') \\
    & \phantom{\mathcal{AW}_r^r(\hat \pi^k + \tilde\pi^k, \hat \pi + \tilde \pi) \le \mathcal{A}}
    + 2^{r-1}\int_{X \times X} \hat p(x') \mathcal W_r^r(\hat \pi_{x'},\delta_{y_0}) \, \tilde \rho^k(dx,dx') \\
    & \phantom{\mathcal{AW}_r^r(\hat \pi^k + \tilde\pi^k, \hat \pi + \tilde \pi) \le \mathcal{A}}
    + 2^{r-1} \int_{X \times X} \alpha_+^k(x,x') \mathcal W_r^r(\hat \pi^k_x, \delta_{y_0}) \, \rho^k(dx,dx') \\
    & \phantom{\mathcal{AW}_r^r(\hat \pi^k + \tilde\pi^k, \hat \pi + \tilde \pi) \le \mathcal{A}}
    + 2^{r-1} \int_{X \times X} (1 - \hat p^k(x)) \mathcal W_r^r(\tilde \pi^k_{x},\delta_{y_0}) \, \rho^k(dx,dx') \\
    & \phantom{\mathcal{AW}_r^r(\hat \pi^k + \tilde\pi^k, \hat \pi + \tilde \pi) \le \mathcal{A}}
    + 2^{r-1} \int_{X \times X} \alpha_-^k(x,x')\mathcal W_r^r(\hat \pi_{x'}, \delta_{y_0}) \, \rho^k(dx,dx') \\
    & \phantom{\mathcal{AW}_r^r(\hat \pi^k + \tilde\pi^k, \hat \pi + \tilde \pi) \le \mathcal{A}}
    + 2^{r-1} \int_{X \times X}  (1 - \hat p(x')) \mathcal W_r^r(\tilde \pi_{x'},\delta_{y_0}) \, \rho^k(dx,dx').
  \end{split}
  \end{align}
  Recall that $\tilde \rho^k$ has marginals $\tilde \mu^k$ and $\tilde \mu$ with total mass smaller than $\epsilon$.
  By \eqref{eq:def Iepsilonr} we find
  \begin{equation}
    \label{ineq:lemsum 1}
       \int_{X\times X} d_X(x,x')^r \, \tilde\rho^k(dx,dx') \le 2^{r-1} \left(I_\varepsilon^r(\tilde\mu^k)+I_\varepsilon^r(\tilde\mu)\right).
  \end{equation}
  Concerning the marginals of $\hat p^k(x) \, \rho(dx,dx')$ and $\hat p(x') \, \rho(dx,dx')$, we find the relations
  \[ \hat p^k(x) \, \tilde \mu^k(dx) = (1 - \hat p^k(x)) \, \hat \mu^k(dx), \quad \hat p(x') \, \tilde \mu(dx') = (1 - \hat p(x')) \, \hat \mu(dx').\]
  Again by \eqref{eq:def Iepsilonr}, we find since $\tilde \rho^k \in \Pi(\tilde \mu^k,\tilde \mu)$, $\hat \pi^k \in \Pi(\hat \mu^k,\hat \nu^k)$ and $\hat \pi \in \Pi(\hat \mu,\hat \nu)$ that
  \begin{gather} \label{ineq:lemsum 2}
    \int_{X \times X} \hat p^k(x) \mathcal W_r^r(\hat \pi^k_x,\delta_{y_0}) \, \tilde \rho^k(dx,dx') = \int_{X \times X} (1 - \hat p^k(x)) \mathcal W_r^r(\hat \pi^k_x,\delta_{y_0}) \, \hat \mu^k(dx) \leq I_\epsilon^r(\hat \nu^k), \\ \label{ineq:lemsum 3}
    \int_{X \times X} \hat p(x') \mathcal W_r^r(\hat \pi_{x'},\delta_{y_0}) \, \tilde \rho^k(dx,dx') = \int_{X \times X} (1 - \hat p(x')) \mathcal W_r^r(\hat \pi_{x'},\delta_{y_0}) \, \hat \mu(dx') \leq I_\epsilon^r(\hat \nu).
  \end{gather}
  We deduce from \eqref{eq:lemsum 3} and \eqref{ineq:lemsum 1}-\eqref{ineq:lemsum 3} that it is sufficient to show
  \begin{align} \label{ineq:lemsum 4}
	   \int_{X\times X} \alpha_+^k(x,x') \mathcal W_r^r(\hat \pi^k_x,\delta_{y_0}) \, \rho^k(dx,dx') &\le I_\epsilon^r(\hat\nu^k),
     \\ \label{ineq:lemsum 5}
	   \int_{X\times X} (1 - \hat p^k(x)) \mathcal W_r^r(\tilde \pi^k_x,\delta_{y_0}) \, \rho^k(dx,dx') & \le  I_\varepsilon^r(\tilde\nu^k),
     \\ \label{ineq:lemsum 6}
	   \int_{X\times X} \alpha_-^k(x,x') \mathcal W_r^r(\hat \pi_{x'},\delta_{y_0}) \, \rho^k(dx,dx') &\le I_\varepsilon^r(\hat\nu),
     \\ \label{ineq:lemsum 7}
	   \int_{X\times X} (1 - \hat p(x')) \mathcal W_r^r(\tilde\pi_{x'}, \delta_{y_0}) \, \rho^k(dx,dx') &\le I_\varepsilon^r(\tilde\nu).
  \end{align}
  To see \eqref{ineq:lemsum 5} and \eqref{ineq:lemsum 7}, note that
  \begin{equation} \label{eq:lemsum identity}
    (1 - \hat p^k (x)) \, \mu^k(dx) = \tilde \mu^k(dx)\quad\text{and}\quad(1 - \hat p(x')) \, \mu(dx') = \tilde \mu(dx').
  \end{equation}

  As a consequence, the first marginal of $(1 - \hat p^k(x))\, \rho^k(dx,dx')$ is $\tilde \mu^k$, whereas the second marginal of $(1 - \hat p(x')) \, \rho^k(dx,dx')$ coincides with $\tilde \mu$.
  Hence, as the mass of $\tilde \mu^k$ and $\tilde \mu$ does not exceed $\epsilon$, we have
  \begin{align*}
    \int_{X \times X} (1 - \hat p^k(x)) \mathcal W_r^r(\tilde \pi^k_x,\delta_{y_0}) \, \rho^k(dx,dx') &= \int_X \mathcal W_r^r(\tilde \pi^k_x,\delta_{y_0}) \, \tilde \mu^k(dx) = \mathcal W_r^r(\tilde \nu^k, \delta_{y_0}) = I_\epsilon^r(\tilde \nu^k), \\
    \int_{X \times X} (1 - \hat p(x')) \mathcal W_r^r(\tilde \pi_{x'},\delta_{y_0}) \, \rho^k(dx,dx') &= \int_X \mathcal W_r^r(\tilde \pi_{x'},\delta_{y_0}) \, \tilde \mu(dx') = \mathcal W_r^r(\tilde \nu, \delta_{y_0}) = I_{\epsilon}^r(\tilde \nu).
  \end{align*}
  Next, we show \eqref{ineq:lemsum 4} and \eqref{ineq:lemsum 6}.
  To this end, denoting $\rho^k(dx,dx')=\mu^k(dx)\,\rho^k_x(dx')=\mu(dx')\,\overleftarrow{\rho}^k_{x'}(dx)$, we have
  \begin{gather*}
    \alpha_+^k(x,x') \, \rho^k(dx,dx') \leq \hat p^k(x) \, \rho^k(dx,dx') = \frac{d \hat \mu^k}{d \mu^k}(x)\, \mu^k(dx) \, \rho^k_x(dx') = \hat \mu^k(dx) \, \rho^k_x(dx'), \\
    \alpha_-^k(x,x') \, \rho^k(dx,dx') \leq \hat p(x') \, \rho^k(dx,dx') = \frac{d \hat \mu}{d \mu}(x') \, \mu(dx') \, \overleftarrow{\rho}^k_{x'}(dx) = \hat \mu(dx') \, \overleftarrow{\rho}^k_{x'}(dx).
  \end{gather*}
  In particular, the first marginal of $\alpha_+^k(x,x') \, \rho^k(dx,dx')$, denoted here by $\tau^k$, is dominated by $\hat \mu^k$, whereas the second marginal of $\alpha_-^k(x,x') \, \rho^k(dx,dx')$, denoted here by ${\tau^k}'$, is dominated by $\hat \mu$.
  Concerning the masses of $\tau^k$ and ${\tau^k}'$, remember \eqref{eq:lemsum identity}, $\alpha_+^k(x,x') \leq 1 - \hat p(x')$ and $\alpha_-^k(x,x') \leq 1 - \hat p^k(x)$, thus,
  \begin{gather*}
    \tau^k(X) = \int_{X \times X} \alpha_+^k(x,x') \, \rho^k(dx,dx') \leq \int_{X} (1 - \hat p(x')) \, \mu(dx') = \tilde \mu(X) \leq \epsilon, \\
    {\tau^k}'(X) = \int_{X \times X} \alpha_-^k(x,x') \, \rho^k(dx,dx') \leq \int_X (1 - \hat p^k(x)) \, \mu^k(dx) = \tilde \mu^k(X) \leq \epsilon.
  \end{gather*}
  Using \eqref{eq:def Iepsilonr}, we conclude with
  \begin{gather*}
    \int_{X \times X} \alpha_+^k(x,x') \mathcal W_r^r(\hat \pi^k_x, \delta_{y_0}) \, \rho^k(dx,dx') = \int_X \mathcal W_r^r(\hat \pi^k_x,\delta_{y_0}) \, \tau(dx) \leq I_\epsilon^r(\hat \nu^k), \\
    \int_{X \times X} \alpha_-^k(x,x') \mathcal W_r^r(\hat \pi_{x'}, \delta_{y_0}) \, \rho^k(dx,dx') = \int_X \mathcal W_r^r(\hat \pi_{x'},\delta_{y_0}) \, \tau'(dx') \leq I_\epsilon^r(\hat \nu).
  \end{gather*}
\end{proof}
The addition on $\mathcal M_r(X\times Y)$ is continuous with respect to the adapted weak topology provided the limits have singular first marginal distributions. We recall that two positive measures $\mu,\nu$ are called singular \textcolor{red}{if and only if}
 there exists a measurable set $A\subset X$ such that $\mu(A^\complement)=0=\nu(A)$.

 \begin{lemma}\label{lem:continuous addition} Let $\pi,\chi\in\mathcal M_r(X\times Y)$ be such that their respective first marginals are singular. Let $\pi^k,\chi^k\in\mathcal M_r(X\times Y)$, $k\in\N$ converge to $\pi$ and $\chi$ respectively in $\mathcal{AW}_r$. Then
	\[
	\pi^k+\chi^k\underset{k\to+\infty}{\longrightarrow}\pi+\chi\quad\text{in }\mathcal{AW}_r.
	\]
\end{lemma}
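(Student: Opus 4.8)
The plan is to work in the lifted picture on $X\times\mathcal P_r(Y)$ through the inclusion $J$, where by \eqref{eq:AW=W} the adapted Wasserstein distance becomes a genuine $\mathcal W_r$-distance, and to exploit there the sub-additivity estimate for $\mathcal W_r^r$ under sums of measures with matching masses recalled just above. Write $\mathsf p,\mathsf q$ for the first marginals of $\pi,\chi$ and $\mathsf p^k,\mathsf q^k$ for those of $\pi^k,\chi^k$, and fix a measurable $A$ with $\mathsf p(A^\complement)=0=\mathsf q(A)$, which exists by singularity. The first step is to observe that $J$ is additive on measures whose first marginals are concentrated on disjoint sets: restricting $\pi+\chi$ to $A\times Y$ gives back $\pi$, so the disintegration of $\pi+\chi$ agrees $\mathsf p$-almost everywhere with that of $\pi$; hence $J(\pi+\chi)\vert_{A\times\mathcal P(Y)}=J(\pi)$, symmetrically on $A^\complement$, and therefore $J(\pi+\chi)=J(\pi)+J(\chi)$.

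Next I would compare $J(\pi^k+\chi^k)$ with $J(\pi^k)+J(\chi^k)$, which is where the real work lies. Writing $\mathsf m^k=\mathsf p^k+\mathsf q^k$ and $p^k=\tfrac{d\mathsf p^k}{d\mathsf m^k}\in[0,1]$, one has $(\pi^k+\chi^k)_x=p^k(x)\pi^k_x+(1-p^k(x))\chi^k_x$ for $\mathsf m^k$-almost every $x$. Coupling $J(\pi^k+\chi^k)$ and $J(\pi^k)+J(\chi^k)$ via the identity on the $X$-coordinate (they share the first marginal $\mathsf m^k$) and using sub-additivity and $1$-homogeneity of $\mathcal W_r^r$ on $\mathcal P(Y)$ yields
\[
\mathcal W_r^r\!\left(J(\pi^k+\chi^k),J(\pi^k)+J(\chi^k)\right)\le 2\int_X p^k(x)(1-p^k(x))\,\mathcal W_r^r(\pi^k_x,\chi^k_x)\,\mathsf m^k(dx).
\]
Since $p^k(1-p^k)\mathsf m^k=p^k\mathsf q^k=(1-p^k)\mathsf p^k\le\mathsf p^k\wedge\mathsf q^k=:\sigma^k$ and $\mathcal W_r^r(\pi^k_x,\chi^k_x)\le2^{r-1}(\mathcal W_r^r(\pi^k_x,\delta_{y_0})+\mathcal W_r^r(\chi^k_x,\delta_{y_0}))$, the right-hand side is at most $2^r\big(\int_X\mathcal W_r^r(\pi^k_x,\delta_{y_0})\,\sigma^k(dx)+\int_X\mathcal W_r^r(\chi^k_x,\delta_{y_0})\,\sigma^k(dx)\big)$. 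The image of $\sigma^k\le\mathsf p^k$ under $x\mapsto\pi^k_x$ is dominated by the $\mathcal P_r(Y)$-marginal of $J(\pi^k)$ and has total mass $\sigma^k(X)$, so the first term is bounded by $I^r_{\sigma^k(X)}$ of that marginal, with $I^r_\epsilon$ as in \eqref{eq:def Iepsilonr} and $(X,x_0)$ replaced by $(\mathcal P_r(Y),\delta_{y_0})$; similarly for $\chi$.

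It then suffices to control both factors. On the one hand, $\mathcal{AW}_r$ dominates $\mathcal W_r$, so $\mathsf p^k\to\mathsf p$ and $\mathsf q^k\to\mathsf q$ weakly with fixed masses, and one obtains $\sigma^k(X)\to0$ (for instance via Portmanteau after separating the carriers of $\mathsf p$ and $\mathsf q$ by disjoint open sets, or via weak lower semicontinuity of the total variation norm together with $\|\mathsf p-\mathsf q\|_{\mathrm{TV}}=\mathsf p(X)+\mathsf q(X)$). On the other hand $J(\pi^k)\to J(\pi)$ in $\mathcal W_r$ by \eqref{eq:AW=W}, hence its $\mathcal P_r(Y)$-marginals converge in $\mathcal W_r$ with fixed mass, so Lemma~\ref{lem:uniform integrability} \ref{it:uniform integrability} gives $\sup_k I^r_\epsilon(\mathcal P_r(Y)\text{-marginal of }J(\pi^k))\to0$ as $\epsilon\to0$, and likewise for $\chi$. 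Combining with the previous paragraph, $\mathcal W_r^r(J(\pi^k+\chi^k),J(\pi^k)+J(\chi^k))\to0$. Finally, sub-additivity of $\mathcal W_r^r$ on $X\times\mathcal P_r(Y)$ gives $\mathcal W_r^r(J(\pi^k)+J(\chi^k),J(\pi)+J(\chi))\le\mathcal W_r^r(J(\pi^k),J(\pi))+\mathcal W_r^r(J(\chi^k),J(\chi))\to0$, and $J(\pi)+J(\chi)=J(\pi+\chi)$ by the first step; the triangle inequality for $\mathcal W_r$ then yields $\mathcal{AW}_r(\pi^k+\chi^k,\pi+\chi)=\mathcal W_r(J(\pi^k+\chi^k),J(\pi+\chi))\to0$.

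The main obstacle is the middle step: one must show that the mixing of the two kernels occurring on the set where $\mathsf p^k$ and $\mathsf q^k$ overlap is asymptotically negligible, which requires simultaneously that the overlap mass $\sigma^k(X)$ tends to $0$ — and this is precisely where singularity of the \emph{limiting} first marginals enters in an essential way, the paper's example $\pi^k=\delta_{(1/k,1)}$, $\chi^k=\delta_{(-1/k,-1)}$ showing the hypothesis cannot be dropped — and a uniform-integrability bound guaranteeing that $\int_X\mathcal W_r^r(\pi^k_x,\delta_{y_0})\,\sigma^k(dx)$ is small merely because $\sigma^k$ is small, which is exactly the content of Lemma~\ref{lem:uniform integrability}.
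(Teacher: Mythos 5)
Your proof is correct, but it follows a genuinely different route from the paper's. The paper first treats the special case where the approximating first marginals are themselves carried by the disjoint sets $A$ and $A^\complement$ (there the optimal couplings simply add), and then reduces the general case to it by inner regularity: it confines the approximating sequences to disjoint open neighbourhoods of large compact carriers via Lemma~\ref{lem:subprob convergence}~\ref{it:subprob convergence1}, and absorbs the leftover mass with the quantitative addition estimate of Lemma~\ref{lem:inequality addition AWr}~\ref{it:addition AWr limsup}, letting $\epsilon\to0$ at the end. You instead work entirely in the lifted space $X\times\mathcal P_r(Y)$, where \eqref{eq:AW=W} turns $\mathcal{AW}_r$ into a plain $\mathcal W_r$ for which addition \emph{is} sub-additive, and you isolate the sole obstruction as the failure of $J$ to be additive on $\pi^k+\chi^k$; your identity-coupling bound $\mathcal W_r^r(J(\pi^k+\chi^k),J(\pi^k)+J(\chi^k))\le 2\int p^k(1-p^k)\mathcal W_r^r(\pi^k_x,\chi^k_x)\,\mathsf m^k(dx)$ is correct, the domination of $p^k(1-p^k)\mathsf m^k$ by $\mathsf p^k\wedge\mathsf q^k$ is right, and both the vanishing of the overlap mass (either via Portmanteau after separating compact carriers, or via lower semicontinuity of total variation together with equality of the masses, which does hold here since $\mathcal{AW}_r$-convergence presupposes equal mass) and the uniform-integrability control via Lemma~\ref{lem:uniform integrability}~\ref{it:uniform integrability} applied on the Polish space $\mathcal P_r(Y)$ go through. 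Your argument is shorter, bypasses Lemma~\ref{lem:subprob convergence} entirely, and makes the mechanism transparent — the only price of adding couplings in $\mathcal{AW}_r$ is the mixing of kernels where the first marginals overlap — whereas the paper's localisation argument reuses machinery (Lemmas~\ref{lem:subprob convergence} and~\ref{lem:inequality addition AWr}) that it needs elsewhere anyway. One small correction to your closing commentary: in the paper's counterexample the overlap $\mathsf p^k\wedge\mathsf q^k$ is identically zero, so what fails there is not your middle step but your first one, namely the additivity $J(\pi+\chi)=J(\pi)+J(\chi)$ of the lift on the \emph{limit}; singularity of the limiting marginals is thus used essentially in both places, not only in the overlap estimate.
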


\begin{proof} Let $\mu_1$, $\mu_2$, $\mu_1^k$ and $\mu_2^k$ denote the respective first marginals of $\pi$, $\chi$, $\pi^k$ and $\chi^k$. Due to singularity, there is a measurable set $A\subset X$ such that $\mu_1(A^\complement)=0=\mu_2(A)$.

	Suppose first that for all $k\in\N$, $\mu^k_1(A^\complement)=0=\mu^k_2(A)$. Let $\rho^k_1\in\Pi(\mu^k_1,\mu_1)$, resp.\ $\rho^k_2\in\Pi(\mu^k_2,\mu_2)$, be an optimal coupling for $\mathcal{AW}_r(\pi^k,\pi)$, resp.\ $\mathcal{AW}_r(\chi^k,\chi)$. Since almost surely
	\[
	(\pi^k+\chi^k)_x=\mathbbm1_A(x)\,\pi^k_x+\mathbbm1_{A^\complement}(x)\chi^k_x\quad\text{and}\quad(\pi+\chi)_x=\mathbbm1_A(x)\,\pi_x+\mathbbm1_{A^\complement}(x)\chi_x,
	\]
	we have
	\begin{align*}
	\mathcal{AW}_r^r(\pi^k+\chi^k,\pi+\chi)&\le\int_{X\times X}\left(d^r_X(x,x')+\mathcal W_r^r((\pi^k+\chi^k)_x,(\pi+\chi)_{x'})\right)\,(\rho^k_1+\rho^k_2)(dx,dx')\\
	&=\int_{X\times X}\left(d_X^r(x,x')+\mathcal W_r^r(\pi^k_x,\pi_{x'})\right)\,\rho^k_1(dx,dx')\\
	&\phantom{\le\ }+\int_{X\times X}\left(d_X^r(x,x')+\mathcal W_r^r(\chi^k_x,\chi_{x'})\right)\,\rho^k_2(dx,dx')\\
	&=\mathcal{AW}_r^r(\pi^k,\pi)+\mathcal{AW}_r^r(\chi^k,\chi)\to0,\quad k\to+\infty.
	\end{align*}
	Let us now go back to the general case. Let $\varepsilon>0$. Since $X$ is a Polish space, $\mu_1$ and $\mu_2$ are inner regular, so there exist two compact sets $K_1\subset A$ and $K_2\subset A^\complement$ such that
	\[
\mu_1(K_1^\complement)<\varepsilon\quad\text{and}\quad\mu_2(K_2^\complement)<\varepsilon.
	\]
  Since $X$ is \textcolor{blue}{metrizable}, it is normal, hence we can separate the closed, disjoint sets $K_1$ and $K_2$ by open, disjoint sets $\tilde K_1$ and $\tilde K_2$ where $K_1 \subset \tilde K_1$ and $K_2 \subset \tilde K_2$.
  Then Lemma \ref{lem:subprob convergence} \ref{it:subprob convergence1} provides sequences $(\tilde \mu^k_1 \times \pi^k_x)_{k\in\N}$ and $(\tilde \mu^k_2 \times \chi^k_x)_{k\in\N}$ with values in $\mathcal M(X\times Y)$ and null sequences $(\varepsilon_k)_{k\in\N}$ and $(\eta_k)_{k\in\N}$ with values in $[0,1]$, such that $\tilde\mu^k_1\le\mu^k_1\vert_{\tilde K_1}$, $\tilde\mu^k_2\le\mu^k_2\vert_{\tilde K_2}$ and
	\[
	\mathcal{AW}_r^r(\tilde\mu^k_1\times\pi^k_x,(1-\varepsilon_k)\pi\vert_{K_1\times Y})+\mathcal{AW}_r^r(\tilde\mu^k_2\times\chi^k_x,(1-\eta_k)\chi\vert_{K_2\times Y})\to0,\quad k\to+\infty.
	\]
  To apply Lemma \ref{lem:inequality addition AWr} \ref{it:addition AWr limsup}, let $0 < \epsilon' \leq \epsilon$ be such that $\epsilon' (\mu_1(K_1) + \mu_2(K_2)) < \epsilon$.
  Let $k$ be sufficiently large such that $\epsilon^k \wedge \eta^k < \epsilon'$.
  We consider the sequences
  \begin{gather*}
    \hat \pi^k = \frac{1 - \epsilon'}{1 - \epsilon^k} \tilde \mu^k_1 \times \pi^k_x + \frac{1 - \epsilon'}{1 - \eta^k} \tilde \mu^k_2 \times \chi^k_x, \quad \hat \pi = (1 - \epsilon')\left( \pi\vert_{K_1 \times Y} + \chi\vert_{K_2 \times Y}\right), \\
    \tilde \pi^k = \pi^k + \chi^k - \hat \pi^k, \quad \tilde \pi = \pi + \chi - \hat \pi,
  \end{gather*}
  where $\tilde \pi^k$ is well-defined in $\mathcal M_r(X \times Y)$ since $\epsilon^k < \epsilon'$ and $\eta^k < \epsilon'$.
  Note that as $k\to+\infty$,
  \begin{gather*}
    \mathcal{AW}_r^r\left(\frac{1 - \epsilon'}{1 - \epsilon^k} \tilde \mu^k_1 \times \pi^k_x, (1 - \epsilon') \pi\vert_{K_1 \times Y}\right) = \frac{1 - \epsilon'}{1 - \epsilon^k} \mathcal{AW}_r^r\left(\tilde \mu^k_1 \times \pi^k_x, (1 - \epsilon^k) \pi \vert_{K_1 \times Y}\right) \to 0, \\
    \mathcal{AW}_r^r\left(\frac{1 - \epsilon'}{1 - \eta^k} \tilde \mu^k_2 \times \chi^k_x, (1 - \epsilon') \chi\vert_{K_2 \times Y}\right) = \frac{1 - \epsilon'}{1 - \eta^k} \mathcal{AW}_r^r\left(\tilde \mu^k_2 \times \chi^k_x, (1 - \eta^k) \chi \vert_{K_2 \times Y}\right) \to 0.
  \end{gather*}
  Since the first marginal distributions of $\tilde\mu^k_1\times\pi^k_x$ and $(1-\varepsilon_k)\pi\vert_{K_1\times Y}$, resp.\ $\tilde\mu^k_2\times\chi^k_x$ and $(1-\eta_k)\chi\vert_{K_2\times Y}$, are concentrated on $\tilde K_1$, resp.\ $\tilde K_2$, and since $\tilde K_1$ and $\tilde K_2$ are disjoint, we have according to the preceding part that
	\[
	\mathcal{AW}_r^r(\hat \pi^k,\hat \pi)\to0,\quad k\to+\infty.
	\]
  Due to $\mathcal{AW}_r$-convergence of $(\pi^k)_{k\in\N}$ and $(\chi^k)_{k\in\N}$, we obtain $\mathcal W_r$-convergence of the marginals of $\pi^k + \chi^k$ to the marginals of $\pi + \chi$.
  Furthermore, we have
  \[\tilde \pi^k(X \times Y) = \tilde \pi(X \times Y) \leq \mu_1(K_1^c) + \mu_2(K_2^c) + \epsilon' (\mu_1(K_1) + \mu_2(K_2)) < 3 \epsilon.  \]
  Then \eqref{eq:addition AWr limsup} yields
	\begin{align*}
    \limsup_{k\to+\infty}\mathcal{AW}_r^r(\pi^k+\chi^k,\pi+\chi) &= \limsup_{k\to+\infty}\mathcal{AW}_r^r(\hat \pi^k+\tilde \pi^k,\hat \pi+\tilde \pi) \\ &\le C\left(I^r_{3\varepsilon}(\mu_1+\mu_2)+I^r_{3\varepsilon}(\nu_1+\nu_2)\right),
  \end{align*}
  where $\nu_1$ and $\nu_2$ denote the respective second marginals of $\pi$ and $\chi$, and the constant $C$ only depends on $r$.
  Therefore, the right-hand side vanishes as $\epsilon\to 0$ according to Lemma \ref{lem:uniform integrability} \ref{it:IepsilonrVanishes}, which concludes the proof.
\end{proof}

		\section{Auxiliary results on the convex order in dimension one}
		\label{sec: convex order}
		We recall that the convex order on $\mathcal M_1(\R)$ is defined by
		\[
		\mu \leq_c \nu \iff \quad \forall f\colon\R\to\R\text{ convex},\quad \mu(f) \leq \nu(f).
		 \]
		The following assertions can be found  \textcolor{blue}{for instance be found in \cite[Section 2]{HiRo12}}: for all $(m_0,m_1)\in\R_+^*\times\R$, there is a one-to-one correspondence between finite positive measures $\mu\in\mathcal M_1(\R)$ with mass $m_0$ such that $\int_\R y\,\mu(dy)=m_1$ and the set of functions $u \colon \R\to\R^+$ which satisfy
	\begin{enumerate}[(i)]
		\item \label{it:u1cx} $u$ is convex;
		\item \label{it:u2asy} $u(y) - m_0|y-m_1|$ goes to $0$ as $|y|$ tends to $+\infty$.
	\end{enumerate}
	Any function which satisfies \ref{it:u1cx} and \ref{it:u2asy} is then called a potential function. As noted above, the potential function of $\mu$ is denoted by
	\[
	u_\mu(y) = \int_\R |y-x|\,\mu(dx).
	\]
	\textcolor{red}{Potential functions can of course also be considered in greater generality than on the real line, but this is not relevant for our purposes.} 
	
	A sequence $(\mu^k)_{k\in\N}$ of finite positive measures with equal masses on the line  converges in $\mathcal{W}_1$ to $\mu$ \textcolor{red}{if and only if}
 the sequence of potential functions $(u_{\mu^k})_{k\in\N}$ converges pointwise to $u_\mu$. In that case, since for all $y\in\R$ the map $x\mapsto\vert y-x\vert$ is Lipschitz continuous with constant $1$, we have by Kantorovich and Rubinstein's duality theorem  that
	\[
	\sup_{y\in\R}\vert u_{\mu^k}(y)-u_\mu(y)\vert\le\mathcal W_1(\mu^k,\mu)\to0,\quad k\to+\infty,
	\]
	hence we even have uniform convergence on $\R$ of potential functions.

\textcolor{red}{For all $m_1\in\R$, the set of all finite positive measures on the real line} with mean $m_1$ is a lattice \cite[Proposition 1.6]{KeRo}, and even a complete lattice \cite{KeRob} \textcolor{red}{for the convex order}. Then all $\mu,\nu\in\mathcal M_1(\R)$ with mean $m_1$ have a supremum, denoted $\mu\vee_c\nu$, and an infimum, denoted $\mu\wedge_c\nu$, with respect to the convex order. In that context it is convenient to work with potential functions since they provide simple characterisations of those bounds:
 	\begin{gather*}
 		\mu\vee_{c}\nu \text{ is defined as the measure with potential function } u_\mu \vee u_\nu,\\
 		\mu \wedge_{c} \nu \text{ is defined as the measure with potential function } \operatorname{co}(u_\mu \wedge u_\nu),
 	\end{gather*}
 	where $\operatorname{co}$ is the convex hull.
  \begin{lemma} \label{lemcontinfsup}
    Let $(\mu^k)_{k\in\N}$, $(\nu^k)_{k\in\N}$ be two sequences of $\mathcal M_1(\R)$ converging respectively to $\mu$ and $\nu$ in $\mathcal W_1$. Suppose that there exists $(m_0,m_1)\in\R_+^*\times\R$ such that $\mu^k(\R)=\nu^k(\R)=m_0$ and $\int_\R x\, \mu^k(dx)=\int_\R y\, \nu^k(dy)=m_1$ for all $k \in \N$.
    Then
    \[\lim_{k\to+\infty} \mathcal W_1(\mu^k \vee_c \nu^k, \mu \vee_c \nu) = 0\quad\text{and}\quad \lim_{k\to+\infty} \mathcal W_1(\mu^k\wedge_c \nu^k, \mu \wedge_c \nu) = 0. \]
  \end{lemma}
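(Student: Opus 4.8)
The plan is to transfer everything to the level of potential functions. Recall from the discussion above that for measures of common mass $m_0$ and common first moment $m_1$, convergence in $\mathcal W_1$ is equivalent to pointwise convergence of the potential functions, which by Kantorovich--Rubinstein is in fact uniform convergence; and that $u_{\mu\vee_c\nu}=u_\mu\vee u_\nu$ while $u_{\mu\wedge_c\nu}=\operatorname{co}(u_\mu\wedge u_\nu)$, the lower convex envelope. All measures entering the statement have mass $m_0$ and first moment $m_1$: this is assumed for $\mu^k,\nu^k$, it passes to the limits $\mu,\nu$ by $\mathcal W_1$-continuity, and it is inherited by $\mu\vee_c\nu$ and $\mu\wedge_c\nu$ (and their $k$-analogues) because $\mu\le_c\mu\vee_c\nu$ and $\mu\wedge_c\nu\le_c\mu$ force equality of mass and of first moment. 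Hence it suffices to prove that $u_{\mu^k}\vee u_{\nu^k}\to u_\mu\vee u_\nu$ and $\operatorname{co}(u_{\mu^k}\wedge u_{\nu^k})\to\operatorname{co}(u_\mu\wedge u_\nu)$ pointwise on $\R$.

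First I would handle the supremum, which is immediate. Since $u_{\mu^k}\to u_\mu$ and $u_{\nu^k}\to u_\nu$ uniformly, the elementary bound $|a\vee b-a'\vee b'|\le|a-a'|\vee|b-b'|$ gives $\|u_{\mu^k}\vee u_{\nu^k}-u_\mu\vee u_\nu\|_\infty\le\|u_{\mu^k}-u_\mu\|_\infty\vee\|u_{\nu^k}-u_\nu\|_\infty\to0$, and in particular pointwise convergence, so $\mathcal W_1(\mu^k\vee_c\nu^k,\mu\vee_c\nu)\to0$. The same bound with $\wedge$ replacing $\vee$ yields $u_{\mu^k}\wedge u_{\nu^k}\to u_\mu\wedge u_\nu$ uniformly as well, and the only remaining issue is to push this convergence through the operator $\operatorname{co}$.

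The one genuinely non-formal ingredient is therefore the continuity of the lower convex envelope under uniform convergence: I would prove that $\operatorname{co}$ is $1$-Lipschitz for $\|\cdot\|_\infty$ on real-valued functions bounded below. This follows from two trivial properties of $\operatorname{co}$ --- monotonicity (if $g\le h$ then $\operatorname{co} g$ is a convex minorant of $h$, hence $\operatorname{co} g\le\operatorname{co} h$) and commutation with the addition of a constant ($\operatorname{co}(g+c)=\operatorname{co} g+c$) --- since then $g\le h+\|g-h\|_\infty$ gives $\operatorname{co} g\le\operatorname{co} h+\|g-h\|_\infty$, and symmetrically, so $\|\operatorname{co} g-\operatorname{co} h\|_\infty\le\|g-h\|_\infty$. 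Applying this to $g=u_{\mu^k}\wedge u_{\nu^k}$ and $h=u_\mu\wedge u_\nu$, which are finite-valued and bounded below (e.g.\ by $0$, so that their convex envelopes are finite and caught between $0$ and the functions themselves), gives $\|\operatorname{co}(u_{\mu^k}\wedge u_{\nu^k})-\operatorname{co}(u_\mu\wedge u_\nu)\|_\infty\to0$, hence pointwise convergence of potential functions and therefore $\mathcal W_1(\mu^k\wedge_c\nu^k,\mu\wedge_c\nu)\to0$. I do not expect any real obstacle beyond bookkeeping; if anything, the point to state carefully is exactly this Lipschitz estimate for $\operatorname{co}$, the rest being a direct dictionary translation between measures and their potentials.
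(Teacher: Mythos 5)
Your proof is correct and takes essentially the same route as the paper: the supremum case follows immediately from uniform convergence of the potential functions, and the infimum case reduces to the sup-norm $1$-Lipschitz continuity of $\operatorname{co}$, which the paper establishes by exactly the sandwich inequalities you derive from monotonicity and translation-invariance of the convex envelope. Your additional bookkeeping (that masses and means pass to the limits and to the lattice operations) is accurate and harmless.
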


  \begin{proof}
    Convergence in $\mathcal W_1$ is equivalent to pointwise convergence of the potential functions.
  	Thus, the convergence of $\mu^k\vee_c \nu^k$ to $\mu\vee_c\nu$ in $\mathcal W_1$ is a consequence of the pointwise convergence of $u_{\mu^k\vee_c \nu^k}=u_{\mu^k}\vee u_{\nu^k}$ to $u_\mu\vee u_\nu=u_{\mu\vee_c\nu}$.

  	To show convergence of $\mu^k\wedge_c \nu^k$ to $\mu\wedge_c\nu$ in $\mathcal W_1$, it is sufficient to show for all $x \in \R$
  	\begin{equation}\label{eq:convex hull convergence}
  	u_{\mu^k\wedge_c \nu^k}(x)=\operatorname{co}(u_{\mu^k}\wedge u_{\nu^k})(x)\to\operatorname{co}(u_\mu\wedge u_\nu)(x)=u_{\mu\wedge_c \nu}(x),\quad k\to+\infty.
  	\end{equation}

  	Since $u_{\mu^k}$ and $u_{\nu^k}$ converge uniformly on $\R$ to $u_\mu$ and $u_\nu$ respectively, we have uniform convergence of $u_{\mu^k}\wedge u_{\nu^k}$ to $u_\mu\wedge u_\nu$.
    Let $\varepsilon>0$ and $k_0\in\N$ be such that for all $k\ge k_0$,
    \[
    \sup_{x\in\R}\vert (u_{\mu^k}\wedge u_{\nu^k})(x)-(u_\mu\wedge u_\nu)(x)\vert\le\varepsilon.
    \]

    For all $k\ge k_0$, we find
    \begin{gather*}
       \operatorname{co}(u_{\mu}\wedge u_{\nu})-\varepsilon \leq (u_{\mu}\wedge u_{\nu}) - \epsilon \leq u_{\mu^k}\wedge u_{\nu^k},\\
      \operatorname{co}(u_{\mu^k}\wedge u_{\nu^k})-\varepsilon \leq (u_{\mu^k}\wedge u_{\nu^k}) - \epsilon \leq u_\mu\wedge u_\nu.
    \end{gather*}

    Thus, as the convex hull is the supremum over all dominated, convex functions, this yields
    \[
    \operatorname{co}(u_\mu\wedge u_\nu)-\varepsilon\le\operatorname{co}(u_{\mu^k}\wedge u_{\nu^k})\le\operatorname{co}(u_\mu\wedge u_\nu)+\varepsilon,
    \]
    which establishes \eqref{eq:convex hull convergence} and completes the proof.
 \end{proof}

\textcolor{red}{We now provide the proof of Proposition \ref{prop:approximationofdecomp} which is the key argument to see that it is enough to prove our main result, namely Theorem \ref{thm:1}, for irreducible pairs of marginals.}

\begin{proof}[Proof of Proposition \ref{prop:approximationofdecomp}]
  To construct the desired decomposition, pick for all $k\in\N$ a coupling $\pi^k \in \Pi_M(\mu^k,\nu^k)$.
  Let $l_n$ and $r_n$ denote the left and right boundary of the open interval $\{u_{\mu_n}<u_{\nu_n}\}$ on which $\mu_n$ is concentrated, and set
  \[ \mu^k_n(dx) = \int_{u = F_\mu(l_n)}^{F_\mu(r_n-)} \delta_{F_{\mu^k}^{-1}(u)}(dx) \, du, \quad \nu_n^k(dy)=\int_{u=F_\mu(l_n)}^{F_\mu(r_n-)}\pi^k_{F_{\mu^k}^{-1}(u)}(dy) \, du.\]
 These are the respective marginals of $\tilde\pi^{k,n}$ on $\R^2$ given by
 \begin{equation}\label{def:pikn} \tilde\pi^{k,n}(dx,dy)=\int_{u=F_\mu(l_n)}^{F_\mu(r_n-)}\delta_{F_{\mu^k}^{-1}(u)}(dx)\,\pi^k_{F_{\mu^k}^{-1}(u)}(dy) \, du.
 \end{equation}
 Since $\pi^k$ is a martingale coupling, we have $\mu^k_n\le_c\nu^k_n$.
 Finally define 
 \[
 J = [0,1] \backslash \bigcup_{n\in N}(F_\mu(l_n),F_\mu(r_n-)),
 \]
 and set
 \[ \eta^k(dx) = \int_{u \in J} \delta_{F_{\mu^k}^{-1}(u)}(dx) \, du,\quad \upsilon^k(dy) = \int_{u \in J} \pi^k_{F^{-1}_{\mu^k}(u)}(dy) \, du. \]
 These are the respective marginals of $\tilde\pi^k$ defined by
 \[ \tilde\pi^k(dx,dy) = \int_{u \in J} \delta_{F_{\mu^k}^{-1}(u)}(dx)\, \pi^k_{F_{\mu^k}^{-1}(u)}(dy) \, du, \]
 which is again a martingale coupling with marginals $(\eta^k,\upsilon^k)$, thus, $\eta^k\le_c\upsilon^k$.

 Using inverse transform sampling for the second equality, we find
 \begin{align*}
   \left(\tilde\pi^k + \sum_{n\in N} \tilde\pi^{k,n}\right)(dx,dy)
   &= \int_{u=0}^{1} \delta_{F_{\mu^k}^{-1}(u)}(dx)\,\pi^k_{F_{\mu^k}^{-1}(u)}(dy) \, du= \int_{x^k\in\R}\delta_{x^k}(dx)\, \pi^k_{x^k}(dy) \, \mu^k (dx^k)\\
   &=\mu^k(dx)\,\pi^k_x(dy)=\pi^k(dx,dy).
 \end{align*}
 Concerning the marginals, we deduce
 \[ \eta^k + \sum_{n\in N} \mu^k_n = \mu^k\quad\text{and}\quad\upsilon^k + \sum_{n\in N} \nu^k_n = \nu^k. \]
 For all $(\tau,u,l,r)\in\mathcal P_1(\R)\times(0,1)\times\R\times\R$, we have by \eqref{eq:equivalence quantile cdf3}:
 \begin{equation}\label{eq:jumps Fmu component}
 F_\tau(l)<u<F_\tau(r-)\implies l<F_\tau^{-1}(u)<r\implies F_\tau(l)<u\le F_\tau(r-).
 \end{equation}
  Since $\mu_n(dx)=\mathbbm1_{(l_n,r_n)}(x)\,\mu(dx)$, using \eqref{eq:jumps Fmu component} for the second equality we find
 \[
 \mu_n(dx)=\int_{x'\in(l_n,r_n)}\delta_{x'}(dx)\,\mu(dx)=\int_{u = F_\mu(l_n)}^{F_\mu(r_n-)} \delta_{F_{\mu}^{-1}(u)}(dx) \, du.
 \]
 We deduce that
 \begin{align*}
   \eta(dx)=\left(\mu-\sum_{n\in N}\mu_n\right)(dx)&=\int_{u=0}^1\delta_{F_\mu^{-1}(u)}(dx)\,du-\sum_{n\in N}\int_{u = F_\mu(l_n)}^{F_\mu(r_n-)} \delta_{F_{\mu}^{-1}(u)}(dx) \, du \\ &= \int_{u \in J} \delta_{F_{\mu}^{-1}(u)}(dx) \, du.
 \end{align*}
 Since the monotone rearrangement yields an optimal coupling, we have
 \[\mathcal W_1(\eta^k,\eta) + \sum_{n\in N} \mathcal W_1(\mu^k_n,\mu_n)= \int_0^1 | F_{\mu^k}^{-1}(u) - F_\mu^{-1}(u) | \, du = \mathcal W_1(\mu^k,\mu),\]
 hence
 \[ \lim_{k\to+\infty} \mathcal W_1(\eta^k,\eta) = 0 = \lim_{k\to+\infty} \mathcal W_1 (\mu^k_n,\mu_n), \quad \forall n \in N. \]
Since the marginals of $\pi^k$ converge weakly, the sequences $(\mu^k)_{k\in\N}$ and $(\nu^k)_{k\in\N}$ are tight, and so is $(\pi^k)_{k\in\N}$.
For $n \in N$, $\tilde\pi^{k,n}$ is dominated by $\pi^k$, hence $(\tilde\pi^{k,n})_{k\in \N}$ is tight and therefore relatively compact. 
Moreover, by $\mathcal W_1$-convergence of $(\mu^k)_{k\in\N}$ and $(\nu^k)_{k\in\N}$, the sequences $\left(\int_\R\vert x\vert\,\mu^k(dx)\right)_{k\in\N}$ and $\left(\int_\R\vert y\vert\,\nu^k(dy)\right)_{k\in\N}$ converge and are in particular bounded.
Hence the sequences $\left(\int_\R\vert x\vert\,\mu^k_n(dx)\right)_{k\in\N}$ and $\left(\int_\R\vert y\vert\,\nu^k_n(dy)\right)_{k\in\N}$ are bounded as well and admit convergent subsequences.
Since the $\mathcal W_1$-convergence is equivalent to the weak convergence plus convergence of the first moments, we deduce that the sequence $(\tilde\pi^{k,n})_{k \in \N}$ is relatively compact in $\mathcal W_1$.
Since $(\pi^k)_{k\in\N}$ is tight, from any subsequence we can extract a further subsequence denoted by $(\pi^{k_j})_{j\in\N}$ which converges weakly to some $\pi\in\Pi_M(\mu,\nu)$.
 There are subsequences $(\tilde\pi^{k_j,n})_{j \in \N}$ converging in $\mathcal W_1$ to a measure $\tilde \pi_n$. \textcolor{blue}{Moreover $\tilde \pi_n \leq \pi$ with $\pi\in\Pi_M(\mu,\nu)$ denoting the weak limit of a subsequence of the tight sequence $(\pi^{k_j})_{j \in \N}$.
 The first marginal of $\tilde \pi_n$ coincides with $\mu_n$ due to the continuity of the projection, thus,\[ \tilde \pi_n \leq \pi\vert_{\textcolor{red}{(l_n,r_n)} \times \R} =\textcolor{purple}{:} \pi_n. \]
 As $\tilde \pi_n(\R \times \R) = \mu_n(\textcolor{red}{(l_n,r_n)}) = \pi_n(\R\times \R)$, there must hold equality, i.e., $\tilde \pi_n = \pi_n$ and $\int_{x\in\R}\tilde\pi_n(dx,dy)=\nu_n(dy)$. By continuity of the projection, we deduce that $\lim_{j\to\infty}\mathcal W_1(\nu^{k_j}_n,\nu_n)=0$ and, since the limit does not depend on the subsequence, $(\nu^k_n)_{k\in\N}$ converges in $\mathcal W_1$ to $\nu_n$.} Analogously, we find that $(\upsilon^k)_{k \in \N}$ converges to $\eta$.
\end{proof}

	The next two lemmas explore the influence of certain scaling and restrictions of measure on condition that the transformed measures are in convex order.

\begin{lemma}\label{lem:scaling}
  Let $r\ge1$ and $\mu\in \mathcal M_r(\R^d)$ be a finite positive measure. Let $m_1=\int_\R x\,\mu(dx)$ and $\mu^\alpha$, $\alpha\in\R_+$ be the image of $\mu$ by $y \mapsto \alpha(y-m_1) + m_1$. Then for all $\alpha,\beta\in\R_+$,
  \begin{equation}\label{eq:Wasserstein scaling}
    \mathcal W_r(\mu^\alpha,\mu^\beta)=\vert\beta-\alpha\vert\left(\int_{\R^d}\vert x-m_1\vert^r\,\mu(dx)\right)^{\frac{1}{r}}\textcolor{red}{=\vert\beta-\alpha\vert\mathcal W_r(\mu^0,\mu^1)}.
  \end{equation}
  Moreover, $(\mu^\alpha)_{\alpha \in \R^+}$ constitutes a peacock, i.e., $\alpha \leq \beta \in \R_+$ implies  $\mu^\alpha \leq_c \mu^\beta$.
\end{lemma}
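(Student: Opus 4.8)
The plan is to treat the two assertions separately. By homogeneity one reduces to the case where $\mu$ is a probability measure: rescaling $\mu$ by a positive constant $c$ turns $\mathcal W_r$ into $c^{1/r}\mathcal W_r$, leaves the convex order unchanged, leaves the barycentre unchanged, and multiplies $\int_{\R^d}|x-m_1|^r\,\mu(dx)$ by $c$, so both sides of \eqref{eq:Wasserstein scaling} and the statement $\mu^\alpha\le_c\mu^\beta$ are unaffected. Hence I may assume $\mu\in\mathcal P_r(\R^d)$, so that $m_1=\int_{\R^d}x\,\mu(dx)$ is the barycentre of $\mu$ and $\mu^0=\delta_{m_1}$. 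Throughout I write $T_\alpha\colon x\mapsto\alpha(x-m_1)+m_1$, so $\mu^\alpha=(T_\alpha)_\ast\mu$, and I record that $T_\alpha$ fixes the barycentre, i.e.\ $\int_{\R^d}y\,\mu^\alpha(dy)=m_1$ for all $\alpha\in\R_+$.

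For \eqref{eq:Wasserstein scaling} the upper bound is immediate: $(T_\alpha,T_\beta)_\ast\mu\in\Pi(\mu^\alpha,\mu^\beta)$ has transport cost $\int_{\R^d}|T_\alpha(x)-T_\beta(x)|^r\,\mu(dx)=|\beta-\alpha|^r\int_{\R^d}|x-m_1|^r\,\mu(dx)$, whence $\mathcal W_r(\mu^\alpha,\mu^\beta)\le|\beta-\alpha|\left(\int_{\R^d}|x-m_1|^r\,\mu(dx)\right)^{1/r}=:|\beta-\alpha|M$. For the matching lower bound I would compare with $\delta_{m_1}$: since $\mathcal W_r^r(\nu,\delta_{m_1})=\int|y-m_1|^r\,\nu(dy)$ for any probability $\nu$, a one-line computation gives $\mathcal W_r(\mu^\alpha,\delta_{m_1})=\alpha M$ for every $\alpha\ge0$. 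Assuming w.l.o.g.\ $\alpha\le\beta$, the triangle inequality for $\mathcal W_r$ on measures of common mass yields $\beta M=\mathcal W_r(\mu^\beta,\delta_{m_1})\le\mathcal W_r(\mu^\beta,\mu^\alpha)+\mathcal W_r(\mu^\alpha,\delta_{m_1})=\mathcal W_r(\mu^\alpha,\mu^\beta)+\alpha M$, so $\mathcal W_r(\mu^\alpha,\mu^\beta)\ge(\beta-\alpha)M$, and \eqref{eq:Wasserstein scaling} follows.

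For the peacock property, fix $0\le\alpha\le\beta$; the case $\beta=0$ is trivial, so assume $\beta>0$. The key algebraic observation is that, pointwise, $T_\alpha=\tfrac{\alpha}{\beta}T_\beta+\bigl(1-\tfrac{\alpha}{\beta}\bigr)m_1$, i.e.\ $T_\alpha(x)$ is a convex combination of $T_\beta(x)$ and the barycentre $m_1$. For any convex $f\colon\R^d\to\R$ this gives $f(T_\alpha(x))\le\tfrac{\alpha}{\beta}f(T_\beta(x))+\bigl(1-\tfrac{\alpha}{\beta}\bigr)f(m_1)$; integrating against $\mu$ and then bounding $f(m_1)=f\bigl(\int y\,\mu^\beta(dy)\bigr)\le\int f\,d\mu^\beta$ by Jensen's inequality (using that $\mu^\beta$ is a probability measure with barycentre $m_1$) yields $\int f\,d\mu^\alpha\le\int f\,d\mu^\beta$. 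As $f$ ranges over all convex functions, $\mu^\alpha\le_c\mu^\beta$.

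None of the steps is genuinely difficult; the only point requiring a small idea is getting an equality rather than just the bound $\le$ in \eqref{eq:Wasserstein scaling}, for which the comparison with $\delta_{m_1}$ through the triangle inequality is the cleanest route, since it sidesteps identifying the optimal coupling and works uniformly in the dimension $d$ and the exponent $r$. The only bookkeeping subtlety is the reduction to probability measures, where one must note that the barycentre, and hence the maps $T_\alpha$, are unaffected by the homogeneous rescaling used to normalise the total mass.
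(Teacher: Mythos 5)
Your proof is correct and follows essentially the same route as the paper's: the upper bound via the coupling $(T_\alpha,T_\beta)_\ast\mu$, the matching lower bound via the triangle inequality through $\delta_{m_1}$, and the peacock property via the convex combination $T_\alpha=\tfrac{\alpha}{\beta}T_\beta+(1-\tfrac{\alpha}{\beta})m_1$ followed by Jensen's inequality. Your explicit normalisation to a probability measure is a harmless extra step that the paper leaves implicit.
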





\begin{proof}
  Let $\alpha\le\beta\in\R_+$.
  By the triangle inequality we obtain
  \begin{align*}
    \left( \int_{\R^d} \vert x-m_1 \vert^r \, \mu^\beta(dx) \right)^{\frac{1}{r}} &=
     \mathcal W_r(\delta_{m_1},\mu^\beta) \le \mathcal W_r(\delta_{m_1},\mu^\alpha) + \mathcal W_r(\mu^\alpha,\mu^\beta) \\
     & = \left( \int_{\R^d} \vert x - m_1 \vert^r \, \mu^\alpha(dx) \right)^{\frac{1}{r}} + \mathcal W_r(\mu^\alpha,\mu^\beta).
  \end{align*}
  Thus,
  \begin{align*}
      \mathcal W_r(\mu^\alpha,\mu^\beta) &\ge \left( \int_{\R^d} \vert x - m_1 \vert^r \, \mu^\beta(dx) \right)^{\frac{1}{r}} - \left( \int_{\R^d} \vert x - m_1 \vert^r \, \mu^\alpha(dx) \right)^{\frac{1}{r}} \\
       & = ( \beta - \alpha) \left( \int_{\R^d} \vert x - m_1 \vert^r \, \mu(dx) \right)^{\frac{1}{r}}.
  \end{align*}
  Since the image of $\mu$ under $x\mapsto(\alpha(x-m_1)+m_1,\beta(y-m_1)+m_1)$ is a coupling between $\mu^\alpha$ and $\mu^\beta$, we also have the reverse inequality
  \[
    \mathcal W_r (\mu^\alpha,\mu^\beta) \le (\beta-\alpha) \left( \int_{\R^d} \vert x - m_1 \vert^r \, \mu(dx) \right)^{\frac{1}{r}},
  \]
  which proves \eqref{eq:Wasserstein scaling}.

	To see that $(\mu^\alpha)_{\alpha \in \R^+}$ is a peacock, we fix again $\alpha \leq \beta\in\R_+$ and a convex function $f$ on $\R^d$. By convexity, we have
	\begin{align*}
    \mu^\alpha(f) &= \int_{\R^d} f(\alpha(x-m_1)+m_1) \, \mu(dx) \\
    &\leq \int_{\R^d}\left( \frac{\alpha}{\beta} f(\beta(x-m_1)+m_1) +
    \left(1 - \frac{\alpha}{\beta}\right) f(m_1)\right) \, \mu(dx) \leq \mu^\beta(f).
	\end{align*}
\end{proof}

\begin{lemma}\label{lem:compact approx} For all $p\in\mathcal P_1(\R)$ with barycentre $m_1\in\R$ and $R\ge0$, let $p^R$ be defined by
	\[
	p^R=p\wedge_c\left(\frac{R-m_1}{2R}\,\delta_{-R}+\frac{R+m_1}{2R}\,\delta_R\right)\quad\text{if}\quad R\ge\vert m_1\vert,
	\]
	and $p^R=\delta_{m_1}$ otherwise. Then
	\begin{enumerate}[label=(\alph*)]
		\item\label{it:compact approx} For all $R>0$, $p^R\le_c p$, and if $R\ge\vert m_1\vert$, then $p^R$ is concentrated on $[-R,R]$.
		\item\label{it:compact approx2} We have
		\[
		\mathcal W_1(p^R,p)\underset{R\to+\infty}{\longrightarrow}0.
		\]
	\end{enumerate}
\end{lemma}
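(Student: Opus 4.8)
The plan is to run everything through the potential-function calculus of Section~\ref{sec: convex order}. Write $q_R=\frac{R-m_1}{2R}\delta_{-R}+\frac{R+m_1}{2R}\delta_R$; for $R\ge|m_1|$ this is a probability measure with barycentre $m_1$ (immediate computation), so $q_R$ and $p$ live in the same complete lattice of probability measures with mean $m_1$, and $p^R=p\wedge_c q_R$ is well defined with potential function $u_{p^R}=\operatorname{co}(u_p\wedge u_{q_R})$. Since $u_{p^R}\le u_p\wedge u_{q_R}\le u_p$, we get $p^R\le_c p$; for $0<R<|m_1|$ the inequality $p^R=\delta_{m_1}\le_c p$ is Jensen's inequality. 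This is the first half of \ref{it:compact approx}. For the concentration claim (assuming $R\ge|m_1|$), $u_{p^R}\le u_{q_R}$ gives $p^R\le_c q_R$, and since the nonnegative convex functions $x\mapsto(x-R)^+$ and $x\mapsto(-R-x)^+$ have zero $q_R$-integral they also have zero $p^R$-integral, whence $p^R((R,\infty))=p^R((-\infty,-R))=0$.

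For \ref{it:compact approx2} I may assume $R\ge|m_1|$. Since equal-mass measures on the line converge in $\mathcal W_1$ iff their potential functions converge pointwise (recalled in Section~\ref{sec: convex order}), it is enough to prove $u_{p^R}(y_0)\to u_p(y_0)$ for each fixed $y_0$. A short computation gives $u_{q_R}(y)=|y-m_1|$ for $|y|\ge R$ and $u_{q_R}(y)=R-\tfrac{m_1y}{R}$ for $|y|\le R$; thus $u_{q_R}$ is convex, piecewise affine with breakpoints only at $\pm R$, with slopes $-1$, $-m_1/R\in[-1,1]$ and $1$. One inequality is free: $u_{p^R}=\operatorname{co}(u_p\wedge u_{q_R})\le u_p$. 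For the other, fix $\varepsilon>0$. If $u_p(y_0)=|y_0-m_1|$ then, since $p^R$ is a probability measure with mean $m_1$, Jensen gives $u_{p^R}(y_0)\ge|y_0-m_1|=u_p(y_0)$ and we are done. Otherwise $u_p(y_0)>|y_0-m_1|$, which forces the subdifferential $\partial u_p(y_0)=[2F_p(y_0-)-1,\,2F_p(y_0)-1]$ to be contained in $(-1,1)$: e.g.\ $1\in\partial u_p(y_0)$ would mean $p$ is concentrated on $(-\infty,y_0]$, hence $u_p(y_0)=y_0-m_1=|y_0-m_1|$, a contradiction, and symmetrically for $-1$.

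Pick $s\in\partial u_p(y_0)\cap(-1,1)$ and set $\ell(y)=u_p(y_0)-\varepsilon+s(y-y_0)$. Then $\ell\le u_p-\varepsilon\le u_p$. Since $s<1$ and $s>-1$, one has $\ell(R)\le R-m_1=u_{q_R}(R)$ and $\ell(-R)\le R+m_1=u_{q_R}(-R)$ for all $R$ large enough; comparing the affine $\ell$ with the convex piecewise-affine $u_{q_R}$ on each of the three pieces (on $[-R,R]$ it suffices to check the endpoints, while on $(-\infty,-R]$ and $[R,\infty)$ the signs of $s+1>0$ and $s-1<0$ make $\ell-u_{q_R}$ attain its maximum at $\pm R$), we conclude $\ell\le u_{q_R}$ on all of $\R$. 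Hence $\ell\le u_p\wedge u_{q_R}$, and being affine $\ell\le\operatorname{co}(u_p\wedge u_{q_R})=u_{p^R}$, so $u_{p^R}(y_0)\ge\ell(y_0)=u_p(y_0)-\varepsilon$ once $R$ is large. Letting $\varepsilon\downarrow0$ and combining with $u_{p^R}\le u_p$ gives $u_{p^R}(y_0)\to u_p(y_0)$, and therefore $\mathcal W_1(p^R,p)\to0$.

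The one genuinely delicate point is that, although $u_p\wedge u_{q_R}\uparrow u_p$ pointwise as $R\to\infty$, the convex-hull operation does \emph{not} commute with this limit: $u_{q_R}$ agrees with $|\cdot-m_1|$ outside $[-R,R]$, so chords of $u_p\wedge u_{q_R}$ with a far-away endpoint might a priori dip below $u_p$. The point of choosing the affine minorant $\ell$ with slope strictly inside $(-1,1)$ is exactly to suppress this effect uniformly once $R$ is large.
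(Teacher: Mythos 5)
Your proof is correct. Part \ref{it:compact approx} is handled essentially as in the paper: domination by definition of the convex infimum (Jensen for the degenerate case $R<|m_1|$), and concentration on $[-R,R]$ from $p^R\le_c q_R$; your extra sentence with the test functions $(x-R)^+$ and $(-R-x)^+$ just makes explicit what the paper states without comment. For part \ref{it:compact approx2} you take a genuinely different route to the same lower bound on $u_{p^R}=\operatorname{co}(u_p\wedge u_{q_R})$. The paper exhibits a single \emph{global} convex minorant, namely $u_p-\varepsilon$: using that $u_p(y)-|y-m_1|\to0$ at infinity, that $u_{q_R}\ge|\cdot-m_1|$, and that $\min u_{q_R}=R-|m_1|\to+\infty$, it shows $u_p-\varepsilon\le u_p\wedge u_{q_R}$ everywhere once $R\ge R_0(\varepsilon)$, which immediately gives \emph{uniform} convergence of the potential functions after taking convex hulls. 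You instead work pointwise at each $y_0$, using the supporting line of $u_p$ at $y_0$ (shifted down by $\varepsilon$) and the observation that its slope lies strictly in $(-1,1)$ whenever $u_p(y_0)>|y_0-m_1|$; the slope analysis against the three affine pieces of $u_{q_R}$ is carried out correctly, and the degenerate case $u_p(y_0)=|y_0-m_1|$ is correctly dispatched by Jensen since $p^R$ keeps barycentre $m_1$. Both arguments rest on the same characterisation of the convex hull as the supremum of dominated convex functions; the paper's buys uniformity and brevity, yours is local but equally rigorous, and your closing remark correctly identifies the pitfall (the convex hull does not commute with the increasing limit $u_p\wedge u_{q_R}\uparrow u_p$) that both constructions are designed to avoid.
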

\begin{proof} Let $p\in\mathcal P_1(\R)$ be with barycentre $m_1\in\R$. For all $R\ge\vert m_1\vert$, let $\eta^R=\frac{R-m_1}{2R}\,\delta_{-R}+\frac{R+m_1}{2R}\,\delta_R$, so that $p^R=p\wedge_c\eta^R$. If $R<\vert m_1\vert$ then $p^R=\delta_{m_1}$ so we clearly have $p^R\le_c p$. Else, $p^R\le_c p$ still holds by definition of the convex infimum. Moreover, since $\eta^R$ is concentrated on $[-R,R]$, so is $p^R$ by domination in the convex order, hence \ref{it:compact approx} is proved.
	
	To show \ref{it:compact approx2}, \textcolor{red}{it suffices to verify pointwise convergence of the corresponding potential functions, i.e., for all} $y\in\R$,
	\begin{equation}\label{cvgencePotentialFunctions}
	u_{p\wedge\eta^R}(y)=\operatorname{co}(u_p\wedge u_{\eta^R})(y)\to u_p(y),\quad R\to+\infty.
	\end{equation}
	
	Let $\varepsilon>0$. Since $u_p(y)-\vert y-m_1\vert$ vanishes as $\vert y\vert\to+\infty$, there exists $M>0$ such that
	\[
	\forall y\in\R,\quad\vert y\vert> M\implies u_p(y)\le \vert y-m_1\vert+\varepsilon.
	\]
	Let $R_0=\vert m_1\vert+\sup_{\textcolor{red}{x}\in[-M,M]}u_p(\textcolor{red}{x})$ and $R\ge R_0$. The map $u_{\eta^R}$ is a piecewise affine function which changes slope at $-R$ and $R$ and such that $u_{\eta^R}(y)\to+\infty$ as $\vert y\vert\to+\infty$. It therefore attains its minimum either at $-R$ where it is equal to $R+m_1$ or at $R$ where it is equal to $R-m_1$, and this minimum is equal to $R-\vert m_1\vert$. We deduce that for all $y\in\R$, $u_{\eta^R}(y)\ge R-\vert m_1\vert$. Moreover, \textcolor{red}{$\delta_{m_1}\le_c {\eta^R}$}, hence we also have $u_{\eta^R}(y)\ge\vert y-m_1\vert$ for all $y\in\R$. Let $y\in\R$. If $\vert y\vert\le M$, then
	\[
	u_p(y)\le\sup_{\textcolor{red}{x \in }[-M,M]}u_p\textcolor{red}{(x)}=R_0-\vert m_1\vert\le R-|m_1|\le u_{\eta^R}(y).
	\]
 If, on the other hand,  $\vert y\vert> M$
 , then
	\[
	u_p(y)\le\vert y-m_1\vert+\varepsilon\le u_{\eta^R}(y)+\varepsilon.
	\]
	We deduce that for all $y\in\R$ and $R\ge R_0$, $u_p(y)-\varepsilon\le(u_p\wedge u_{\eta^R})(y)$. Thus, as the convex hull is the supremum over all dominated, convex functions, this yields
	\[
	u_p-\varepsilon\le\operatorname{co}(u_p\wedge u_{\eta^R})\le u_p,
	\]
	which proves \eqref{cvgencePotentialFunctions} and completes the proof.
\end{proof}

	 \section{Proof of the main theorem}
	 \label{sec:proof main thm}

	 We consider the setting of Theorem \ref{thm:1}. Before entering its technical proof, we argue that it is sufficient to consider the case $r=1$ and that we can assume w.l.o.g.\ that $(\mu,\nu)$ is irreducible.

	 When considering a sequence of couplings $(\pi^k)_{k \in \N}$ which converges in $\mathcal{AW}_1$ to $\pi\in\Pi(\mu,\nu)$, whose sequence of marginal distributions $(\mu^k,\nu^k)_{k\in\N}$ is converging in $\mathcal W_r$, one can deduce $\mathcal{AW}_r$-convergence for the sequence of couplings. This is due to \eqref{eq:AW=W}, and $\mathcal W_r$-convergence being equivalent to weak convergence plus convergence of the $r$-moments. To see the latter, we find, when equipping $X \times \mathcal P_r(Y)$ with the product metric $((x,p),(x',p'))\mapsto(d_X^r(x,x') + \mathcal W_r^r(p,p'))^{1/r}$,
\begin{align}\label{eq:Wr equiv weak moments}
	\begin{split}
		\int_{X \times\mathcal P_r(Y)}\left(d_X^r(x,x_0) +  \mathcal W_r^r(p,\delta_{y_0})\right)\, J(\pi^k)(dx,dp) = \mathcal W_r^r(\mu^k,\delta_{x_0}) + \mathcal W_r^r(\nu^k,\delta_{y_0}) \\ \underset{k\to+\infty}{\longrightarrow} \mathcal W_r^r(\mu,\delta_{x_0}) + \mathcal W_r^r(\nu,\delta_{y_0}) = \int_{X \times\mathcal P_r(Y)}\left( d_X^r(x,x_0) + \mathcal W_r^r(p,\delta_{y_0}) \right)\, J(\pi)(dx,dp).
	\end{split}
\end{align}

A direct consequence is the following lemma, according to which proving Theorem \ref{thm:1} for $r=1$ is sufficient.

%

	 \begin{lemma} \label{lem:r=1 is enough}
	 	In the setting of Theorem \ref{thm:1}, assume that there exists a sequence of martingale couplings $\pi^k\in\Pi_M(\mu^k,\nu^k)$, $k\in\N$ converging to $\pi$ in $\mathcal{AW}_1$. Then this sequence also converges to $\pi$ in $\mathcal{AW}_r$.
	 \end{lemma}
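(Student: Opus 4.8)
The plan is to reduce the statement to the one–dimensional uniform–integrability estimate of Lemma~\ref{lem:uniform integrability}, using the bicausal–coupling description of the adapted Wasserstein distance rather than the embedding $J$. The point is that $\mathcal{AW}_r^r$ is \emph{not} controlled by $\mathcal{AW}_1$ (one only has $\mathcal W_1\le\mathcal W_r$, not the reverse), so the ``obvious'' argument through \eqref{eq:AW=W} and ``weak convergence plus convergence of $r$-moments'' has to be handled with care; working with bicausal couplings keeps the second coordinates on the level of $\R$ and makes everything elementary. Throughout, $x_0,y_0$ denote the fixed base points, so $d_X(x,x_0)=|x-x_0|$ and $d_Y(y,y_0)=|y-y_0|$.

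First I would fix, for every $k$, a bicausal coupling $\eta^k\in\Pi_{bc}(\pi^k,\pi)$ that is almost optimal for $\mathcal{AW}_1$, i.e.
\[
c_k:=\int_{\R^2\times\R^2}\bigl(|x-x'|+|y-y'|\bigr)\,\eta^k(dx,dy,dx',dy')\le\mathcal{AW}_1(\pi^k,\pi)+2^{-k}\underset{k\to+\infty}{\longrightarrow}0 .
\]
Such $\eta^k$ exist because $\Pi_{bc}(\pi^k,\pi)\ne\emptyset$ (for instance $\chi(dx,dx')\,(\pi^k_x\otimes\pi_{x'})(dy,dy')$ with $\chi\in\Pi(\mu^k,\mu)$ is bicausal) and $\mathcal{AW}_1(\pi^k,\pi)\to0$ by assumption. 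The same $\eta^k$ is admissible in the bicausal representation of $\mathcal{AW}_r$, so $\mathcal{AW}_r^r(\pi^k,\pi)\le\int(|x-x'|^r+|y-y'|^r)\,\eta^k$, and everything reduces to showing that this upper bound vanishes.

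Second, using $r\ge1$ I would estimate $|x-x'|^r\le|x-x'|+2^{r-1}\bigl(|x-x_0|^r+|x'-x_0|^r\bigr)\mathbbm 1_{\{|x-x'|>1\}}$ and symmetrically for $|y-y'|^r$; the linear parts contribute at most $c_k$. For the tails, note that $\eta^k\in\Pi(\pi^k,\pi)$ forces the $(x,x')$-marginal of $\eta^k$ to be a coupling of $\mu^k$ and $\mu$ and the $(y,y')$-marginal to be a coupling of $\nu^k$ and $\nu$; hence the $x$-marginal of $\eta^k$ restricted to $\{|x-x'|>1\}$ is dominated by $\mu^k$ and, by Markov's inequality, has mass at most $c_k$, so by \eqref{eq:def Iepsilonr} its $r$-th moment is at most $I^r_\epsilon(\mu^k)$ as soon as $c_k<\epsilon$; the three remaining pieces are bounded the same way by $I^r_\epsilon(\mu)$, $I^r_\epsilon(\nu^k)$ and $I^r_\epsilon(\nu)$. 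Thus, for $k$ large enough that $c_k<\epsilon$,
\[
\mathcal{AW}_r^r(\pi^k,\pi)\le c_k+2^{r-1}\bigl(I^r_\epsilon(\mu^k)+I^r_\epsilon(\mu)+I^r_\epsilon(\nu^k)+I^r_\epsilon(\nu)\bigr).
\]
Since all measures in play are probability measures and $\mu^k\to\mu$, $\nu^k\to\nu$ in $\mathcal W_r$, Lemma~\ref{lem:uniform integrability}\ref{it:uniform integrability} gives $\sup_k I^r_\epsilon(\mu^k)\to0$ and $\sup_k I^r_\epsilon(\nu^k)\to0$ as $\epsilon\to0$, and Lemma~\ref{lem:uniform integrability}\ref{it:IepsilonrVanishes} gives $I^r_\epsilon(\mu)\to0$, $I^r_\epsilon(\nu)\to0$. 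So, given $\delta>0$, I would pick $\epsilon>0$ with $\sup_k I^r_\epsilon(\mu^k)+I^r_\epsilon(\mu)+\sup_k I^r_\epsilon(\nu^k)+I^r_\epsilon(\nu)<\delta$; letting $k\to+\infty$ above yields $\limsup_k\mathcal{AW}_r^r(\pi^k,\pi)\le2^{r-1}\delta$, and since $\delta>0$ is arbitrary, $\mathcal{AW}_r(\pi^k,\pi)\to0$.

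The main obstacle, and the reason for taking the bicausal route, is the point flagged at the start: $\mathcal{AW}_1$-convergence only yields weak convergence of the embedded measures $J(\pi^k)$ in the \emph{coarser} $\mathcal W_1$-topology on the space of kernels, and upgrading this to weak convergence in the finer $\mathcal W_r$-topology — which is what the equivalence ``$\mathcal W_r$-convergence $=$ weak convergence $+$ $r$-moment convergence'' really needs — would require uniform $r$-integrability of the kernels $\pi^k_x$, which is not available a priori. The bicausal coupling avoids this altogether, because it only uses the real coordinates $y,y'$, so the sole analytic input is the uniform $r$-integrability of the one-dimensional marginals, i.e.\ Lemma~\ref{lem:uniform integrability}.
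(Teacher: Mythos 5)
Your proof is correct, and it takes a genuinely different route from the paper's. The paper disposes of Lemma \ref{lem:r=1 is enough} in one line: by \eqref{eq:AW=W} it identifies $\mathcal{AW}_r$-convergence with $\mathcal W_r$-convergence of $J(\pi^k)$ on the extended space $X\times\mathcal P_r(Y)$, invokes the characterisation ``$\mathcal W_r$-convergence $=$ weak convergence $+$ convergence of $r$-th moments'', and checks the moment convergence via \eqref{eq:Wr equiv weak moments}. You instead work with near-optimal bicausal couplings $\eta^k$, split $|x-x'|^r$ and $|y-y'|^r$ into a linear part (controlled by the $\mathcal{AW}_1$-cost $c_k$) and a tail part supported on a set of $\eta^k$-mass at most $c_k$ (controlled by $I^r_\epsilon$ of the four marginals via Lemma \ref{lem:uniform integrability}\ref{it:IepsilonrVanishes} and \ref{it:uniform integrability}). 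What your approach buys is that it is entirely self-contained and quantitative, and it sidesteps the topological point you correctly flag: $\mathcal{AW}_1$-convergence only yields weak convergence of $J(\pi^k)$ for the coarser topology in which the fibre $\mathcal P(Y)$ carries the $\mathcal W_1$-metric, while the standard ``weak $+$ moments'' equivalence on $(X\times\mathcal P_r(Y),(d_X^r+\mathcal W_r^r)^{1/r})$ formally asks for weak convergence in the finer topology. That gap in the soft argument can in fact be closed — the moment functional $(x,p)\mapsto d_X^r(x,x_0)+\mathcal W_r^r(p,\delta_{y_0})$ is lower semicontinuous for the coarse topology, and convergence of its integrals combined with coarse weak convergence upgrades (via a Skorokhod/Fatou argument and the fact that on $\R$ weak convergence plus $r$-moment convergence already gives $\mathcal W_r$-convergence of the fibre measures) to convergence in the finer topology — so the paper's conclusion stands, but its justification is terser than your explicit estimate; your argument can be read as a direct proof of the same equivalence in this special case.
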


Next, Proposition \ref{prop:approximationofdecomp} is the key ingredient to show that it is enough to prove Theorem \ref{thm:1} when $(\mu,\nu)$ is irreducible.

\begin{lemma}\label{lem:irreducible is enough} If the conclusion of Theorem \ref{thm:1} holds for $r=1$ and for any irreducible pair of marginals $(\mu,\nu)$, then it holds for $r=1$ and for any pair $(\mu,\nu)$ in the convex order.
\end{lemma}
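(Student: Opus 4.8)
Here is the strategy I would follow.

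The plan is to pass, via Proposition~\ref{prop:approximationofdecomp}, to the decomposition of $(\mu,\nu)$ into irreducible components, treat each component with the assumed irreducible case of Theorem~\ref{thm:1}, treat the common (diagonal) part separately, and then glue the pieces back together using continuity of addition for singular first marginals (Lemma~\ref{lem:continuous addition}) and the near‑continuity estimate of Lemma~\ref{lem:inequality addition AWr}. Concretely, I would fix $\pi\in\Pi_M(\mu,\nu)$ and write $\pi=\chi+\sum_{n\in N}\pi_n$, where $\eta=\mu|_{\{u_\mu=u_\nu\}}$, $\chi=(\operatorname{id},\operatorname{id})_\ast\eta$, the pair $(\mu_n,\nu_n)$ is irreducible with $\mu_n$ concentrated on $I_n=\{u_{\mu_n}<u_{\nu_n}\}$, and $\pi_n(dx,dy)=\mu_n(dx)\,\pi_x(dy)\in\Pi_M(\mu_n,\nu_n)$. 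Proposition~\ref{prop:approximationofdecomp} then furnishes, for each $k$, a decomposition $\eta^k+\sum_{n\in N}\mu^k_n=\mu^k$, $\upsilon^k+\sum_{n\in N}\nu^k_n=\nu^k$ into pairs in convex order with $\mu^k_n\to\mu_n$, $\nu^k_n\to\nu_n$, $\eta^k\to\eta$, $\upsilon^k\to\eta$ in $\mathcal W_1$. Since $\mu^k_n(\R)=\mu_n(\R)=\nu^k_n(\R)$ for all $k$, after normalising these masses to $1$ (homogeneity of $\mathcal{AW}_1$) the assumed conclusion of Theorem~\ref{thm:1} for $r=1$ and the \emph{irreducible} pair $(\mu_n,\nu_n)$ — approached in $\mathcal W_1$ by the convex‑ordered pairs $(\mu^k_n,\nu^k_n)$ — yields martingale couplings $\pi^k_n\in\Pi_M(\mu^k_n,\nu^k_n)$ with $\pi^k_n\to\pi_n$ in $\mathcal{AW}_1$.

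For the common part I would choose any $\chi^k\in\Pi_M(\eta^k,\upsilon^k)$ (nonempty by Strassen since $\eta^k\le_c\upsilon^k$) with $\int_{\R\times\R}|x-y|\,\chi^k(dx,dy)\le\varepsilon_k+\inf_{\chi\in\Pi_M(\eta^k,\upsilon^k)}\int|x-y|\,\chi$ for a null sequence $\varepsilon_k>0$. Coupling $\eta^k$ to $\eta$ optimally for $\mathcal W_1$ and bounding $\mathcal W_1(\chi^k_x,\delta_{x'})\le\int|y-x|\,\chi^k_x(dy)+|x-x'|$ gives
\[
\mathcal{AW}_1(\chi^k,\chi)\le 2\,\mathcal W_1(\eta^k,\eta)+\int_{\R\times\R}|x-y|\,\chi^k(dx,dy).
\]
Only the second term needs work: I would argue it vanishes by a compactness/stability argument. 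If not, along a subsequence there are martingale couplings between marginals converging in $\mathcal W_1$ to the \emph{same} limit $\eta$ but with $\int|x-y|\,\chi^k\ge c>0$; since $(x,y)\mapsto|x-y|\in\Phi_1(\R\times\R)$, such a sequence is $\mathcal W_1$‑relatively compact, its $\mathcal W_1$‑limits lie in $\Pi_M(\eta,\eta)$ (the martingale constraint is stable under $\mathcal W_1$‑convergence with converging marginals), hence equal $(\operatorname{id},\operatorname{id})_\ast\eta$, along which $|x-y|$ integrates to $0$ — contradicting $\int|x-y|\,\chi^k\to 0$ along the subsequence. Thus $\chi^k\to\chi$ in $\mathcal{AW}_1$. (Alternatively one can invoke the explicit inverse‑transform martingale coupling of \cite{JoMa18}.)

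Now set $\pi^k:=\chi^k+\sum_{n\in N}\pi^k_n$. By the additivity of marginals in Proposition~\ref{prop:approximationofdecomp}, $\pi^k$ has marginals $\mu^k,\nu^k$, and $\pi^k\in\Pi_M(\mu^k,\nu^k)$ since the disintegration kernel of $\pi^k$ at $x$ is a $\mu^k(dx)$‑convex combination of the kernels $\chi^k_x$ and $(\pi^k_n)_x$, each with barycentre $x$. To prove $\pi^k\to\pi$ in $\mathcal{AW}_1$: the first marginals $\eta,(\mu_n)_{n\in N}$ are concentrated on the pairwise disjoint Borel sets $\{u_\mu=u_\nu\}$ and $(I_n)_{n\in N}$, hence mutually singular, so iterating Lemma~\ref{lem:continuous addition} shows that for every $M$ the partial sum $\hat\pi^k:=\chi^k+\sum_{n\le M}\pi^k_n$ converges in $\mathcal{AW}_1$ to $\hat\pi:=\chi+\sum_{n\le M}\pi_n$. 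Writing $\pi^k=\hat\pi^k+\tilde\pi^k$ with $\tilde\pi^k=\sum_{n>M}\pi^k_n$ of total mass $\sum_{n>M}\mu_n(\R)=:\epsilon_M$ (independent of $k$, and $\to0$ as $M\to\infty$ since $\sum_{n\in N}\mu_n(\R)\le\mu(\R)<\infty$), and noting the marginals of $\pi^k$ converge in $\mathcal W_1$ to those of $\pi$, Lemma~\ref{lem:inequality addition AWr}~\ref{it:addition AWr limsup} gives $\limsup_{k\to+\infty}\mathcal{AW}_1(\pi^k,\pi)\le C\,(I^1_{\epsilon_M}(\mu)+I^1_{\epsilon_M}(\nu))$ with $C$ depending only on $r=1$; letting $M\to\infty$ and using Lemma~\ref{lem:uniform integrability}~\ref{it:IepsilonrVanishes} concludes.

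The two genuinely delicate points I anticipate are: (i) the common‑part estimate, i.e.\ that a martingale coupling between marginals that are $\mathcal W_1$‑close can be chosen with small $|x-y|$‑cost (handled above by compactness/stability, or by the inverse‑transform coupling); and (ii) the passage from finite to countable sums, where the crucial feature is that the tail $\sum_{n>M}\pi^k_n$ has mass small uniformly in $k$, so that Lemma~\ref{lem:inequality addition AWr}, rather than Lemma~\ref{lem:continuous addition} (which would require singularity of the tail's first marginal, not available here), closes the argument.
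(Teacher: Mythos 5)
Your proposal is correct and follows essentially the same route as the paper: decompose via Proposition~\ref{prop:approximationofdecomp}, invoke the irreducible case on each component, handle the diagonal part by the uniqueness of the martingale coupling in $\Pi_M(\eta,\eta)$ together with $\mathcal W_1$-compactness, glue finite partial sums with Lemma~\ref{lem:continuous addition}, and control the uniformly small tail with Lemma~\ref{lem:inequality addition AWr}~\ref{it:addition AWr limsup}. The only (harmless) variation is in the diagonal part, where you bound $\mathcal{AW}_1(\chi^k,\chi)$ directly through the degenerate kernels $\chi_{x'}=\delta_{x'}$ instead of comparing $\chi^k$ to an intermediate coupling produced by Proposition~\ref{prop:adapted approximation dim1}, as the paper does; both arguments reduce to $\int|x-y|\,\chi^k(dx,dy)\to 0$.
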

\begin{proof} In the setting of Theorem \ref{thm:1}, fix $\pi \in \Pi_M(\mu,\nu)$. Denote by $(\mu_n,\nu_n)_{n\in N}$ the decomposition of $(\mu,\nu)$ into irreducible components with
	\[
	\mu =\eta+ \sum_{n\in N} \mu_n,\quad \nu=\eta+\sum_{n\in N} \nu_n.
	\]

	By Proposition \ref{prop:approximationofdecomp}, we can find sub-probability measures $(\eta^k,\upsilon^k)_{k\in\N}$, $(\mu^k_n)_{(k,n)\in\N\times  N}$, $(\nu^k_n)_{(k,n)\in\N\times N}$ such that
	\begin{gather*}
	\eta^k \leq_c \upsilon^k,\quad \mu^k_n \leq_c \nu^k_n\quad \forall (k,n)\in\N\times N,\\
	\eta^k \to \eta,\quad \upsilon^k \to \eta,\quad\mu^k_n \to \mu_n,\quad \nu^k_n\to \nu_n\quad \text{in }\mathcal W_1,\quad k\to+\infty.
	\end{gather*}
	For $k\in\N$, let $\chi^k\in\Pi_M(\eta^k,\upsilon^k)$ be a  martingale coupling between $\eta^k$ and $\upsilon^k$. Since the marginals both converge to $\eta$ in $\mathcal W_1$, $(\chi^k)_{k\in\N}$ is tight and any accumulation point with respect to the weak topology belongs to $\Pi_M(\eta,\eta)$. Since $\chi:=(\operatorname{id},\operatorname{id})_\ast\eta$ is the only martingale coupling between $\eta$ and itself, $(\chi^k)_{k\in\N}$ converges weakly to $\chi$ as $k$ goes to $+\infty$ and even in $\mathcal W_1$ according to \eqref{eq:Wr equiv weak moments}. We can show that this convergence also holds in $\mathcal{AW}_1$.
	Indeed, according to Proposition \ref{prop:adapted approximation dim1}, there exists a sequence $\tilde\chi^k\in\Pi(\eta^k,\upsilon^k)$, $k\in\N$, converging to $\chi$ in $\mathcal{AW}_1$. Then
	\begin{align*}
	\mathcal{AW}_1 (\chi^k, \tilde\chi^k) & \le \int_\R \mathcal W_1 (\chi^k_x, \tilde \chi^k_x) \, \eta^k(dx) \le\int_\R\left(\mathcal W_1 (\chi^k_x, \delta_x) + \mathcal W_1 (\delta_x, \tilde \chi^k_x )\right) \, \eta^k(dx)\\
	&=\int_\R \int_\R \vert x' - x \vert \, ( \chi^k_x + \tilde \chi^k_x ) ( dx' ) \, \eta^k ( dx ) = \int_\R \vert x' - x \vert \, ( \chi^k + \tilde \chi^k ) ( dx, dx' ).
	\end{align*}
	Since $(x,x') \mapsto \vert x'-x \vert \in \Phi_1(\R^2)$ and $\chi^k$ and $\tilde\chi^k$ converge to $\chi$ in $\mathcal W_1$, we deduce, using \eqref{eq:convergencePr}, that
	\[ \int_\R\vert x'-x\vert\,(\chi^k+\tilde\chi^k)(dx,dx')\to 2\int_\R\vert x'-x\vert\,\chi(dx,dx')=0,\quad k\to+\infty, \]
	hence,
	\[ \mathcal{AW}_1 (\chi^k, \chi) \le \mathcal{AW}_1 (\chi^k, \tilde \chi^k) + \mathcal{AW}_1 ( \tilde\chi^k, \chi) \to 0,\quad k\to+\infty. \]
	By assumption, we can find for any $n\in N$ a sequence $(\pi^{k,n})_{k\in\N}$ of martingale couplings between $\mu^k_n$ and $\nu^k_n$, $k\in\N$, which converges in $\mathcal{AW}_1$ to $\pi_n$ as $k$ goes to $+\infty$, where $\pi_n$ denotes $\pi$ restricted to the $n$-th irreducible component given by \eqref{eq:decomposition martingale couplings}.
	By Lemma \ref{lem:continuous addition}, we have for all $p\in N$ that
	\[
	\chi^k + \sum_{n\in N,\ n\le p}\pi^{k,n} \to \chi + \sum_{n\in N,\ n\le p} \pi_n\quad \text{in }\mathcal{AW}_1,\quad k\to+\infty.
	\]
	Moreover, the respective marginals of $\chi^k+\sum_{n\in N}\pi^{k,n}$, namely $\mu^k$ and $\nu^k$, converge in $\mathcal W_1$ to the respective marginals of $\chi+\sum_{n\in N}\pi_n$, namely $\mu$ and $\nu$. Therefore, according to Lemma \ref{lem:inequality addition AWr} \ref{it:addition AWr limsup}, there exists a constant $C>0$ such that
	\[
	\limsup_k\mathcal{AW}_1\left(\chi^k + \sum_{n\in N} \pi^{k,n}, \chi + \sum_{n\in N} \pi_n \right)\le C\left(I_{\varepsilon_p}^1(\mu)+I_{\varepsilon_p}^1(\nu)\right),
	\]
	where $\varepsilon_p=\sum_{n\in N,n>p}\mu_n(\R)$ where by convention the sum over an empty set is $0$. Clearly, $(\epsilon_p)_{p\in N}$ tends to $0$, thus Lemma \ref{lem:uniform integrability} \ref{it:IepsilonrVanishes} reveals that the right-hand side vanishes as $p$ goes to $\sup N$. This proves that $\pi^k = \chi^k + \sum_{n \in  N} \pi^{k,n} \in \Pi_M(\mu^k,\nu^k)$ converges in $\mathcal{AW}_1$ to $\pi = \chi + \sum_{n \in  N} \pi^k \in \Pi_M(\mu,\nu)$.
\end{proof}

	 \begin{proof}[Proof of Theorem \ref{thm:1}] \emph{Step 1.} Due to Lemma \ref{lem:r=1 is enough} and Lemma \ref{lem:irreducible is enough}, we may suppose w.l.o.g.\ that $r=1$ and $(\mu,\nu)$ is irreducible with component $I = (\ell,\rho)$, $\ell \in \R \cup \{-\infty\}$, $\rho \in \R \cup \{ +\infty \}$. Next, we define auxiliary martingale couplings close to $\pi$ which will be easier to approximate in the limit.
	 	We define them with the same first marginal distribution whereby the second marginal distribution is smaller with respect to the convex order.
	 	These auxiliary couplings will satisfy two key properties:
	 	first, their second marginal distribution must be concentrated on a compact subset of $I$ when the first marginal distribution is itself concentrated on a certain compact subset $K$ of $I$.
	 	Second, it is essential that their second marginal distribution has positive mass on some two compact subsets of $I$ on both sides of $K$.

	 	Fix $\epsilon \in (0,\frac{1}{2})$.
	 	Choose a compact subset $K=[a,b]$ of $I$ with
	 	\begin{equation} \label{eq:estimate 1}
	 	\mu(K^\complement)<\varepsilon.
	 	\end{equation}
	 	Instead of directly approximating $\pi$, we initially consider the martingale coupling $\pi^{R,\alpha}$ whose definition is given below.
	 	For any $R>0$, let $(\pi^R_x)_{x\in \R}$ be the probability kernel obtained by virtue of Lemma \ref{lem:compact approx}. By Lemma \ref{lem:compact approx} \ref{it:compact approx} we have for all $ x \in \R$ that $\pi^R_x\le_c\pi_x$.
	 	Therefore,
	 	\[
	 	\mathcal W_1 ( \pi^R_x, \pi_x ) \le 2 \int_\R \vert  y \vert \, \pi_x (dy),
	 	\]
	 	where the right-hand side is a $\mu$-integrable function of $x$. By Lemma \ref{lem:compact approx} \ref{it:compact approx2} we find $\pi^R_x \to \pi_x$ in $\mathcal W_1$ as $R \to +\infty$.
	 	Let $\pi^R:=\mu\times\pi^R_x$, then dominated convergence yields
	 	\[
	 	\mathcal{AW}_1(\pi^R,\pi)\le\int_\R\mathcal{W}_1 (\pi_x^R,  \pi_x) \, \mu(dx) \to 0,\quad R\to +\infty.
	 	\]
	 	Denote by $\nu^R$ the second marginal of $\pi^R$.
	 	Consequently, $\nu^R$ converges to $\nu$ for the $\mathcal W_1$-distance and $\nu^R \leq_c \nu$ for all $R > 0$.
	 	Let $\tilde a$ and $\tilde b$ be real numbers such that $\tilde a\in(\ell,a)$ and $\tilde b\in(b,\rho)$, for instance
	 	\[
	 	\tilde a = \frac{\ell+a}{2} \vee (a-1) \text{ and } \tilde b = (b + 1) \wedge \frac{b+\rho}{2}.
	 	\]
Since $(\mu,\nu)$ is irreducible on $I$, \textcolor{blue}{according to Remark \ref{remassbord},} $\nu$ assigns positive mass to any neighbourhood \textcolor{red}{in $\overline  I$} of the endpoints $\ell$ and $\rho$ of $I$. From now on, we use the notational convention that for all $c\in\R\cup\{\pm\infty\}$,
	 	\[
	 	[-\infty,c)=\{x\in\R\mid x<c\},\quad(c,+\infty]=\{x\in\R\mid c<x\}\quad\text{and}\quad[-\infty,+\infty]=\R.
	 	\]
	 	In particular, $\overline I=[\ell,\rho]\subset\R$.
	 	Then $[\ell,\tilde a)$ and $(\tilde b, \rho]$ are relatively open on $\overline I$ with \textcolor{red}{$\nu^R(\overline I)=1=\nu(\overline I)$}, so Portmanteau's theorem yields
	 	\[
	 	\liminf_{R \to +\infty} \nu^R( [\ell,\tilde a) ) \ge \nu ( [\ell,\tilde a) ) > 0,\quad
	 	\liminf_{R \to + \infty} \nu^R( (\tilde b,\rho] ) \ge \nu( (\tilde b,\rho] ) > 0.
	 	\]
	 	Thus, we deduce that we can choose $R>0$ such that
	 	\begin{equation}\label{eq:choice of R}
	 	R\ge |a|\vee |b|,\ \quad\int_\R\mathcal{W}_1 (\pi_x^R,  \pi_x) \, \mu(dx)<\varepsilon,\quad \nu^R([\ell,\tilde a))>0,\quad\textrm{and}\quad\nu^R((\tilde b,\rho])>0.
	 	\end{equation}
	 	Let $\pi^{R,\alpha}_x$ be the image of $\pi^R_x$ by $y \mapsto \alpha (y - x) + x$ when $\alpha\in(0,1)$.
	 	Then $\pi^{R,\alpha}:=\mu\times\pi^{R,\alpha}_x$ satisfies
	 	by Lemma \ref{lem:scaling}
	 	\begin{align*}
	 	\mathcal{AW}_1(\varepsilon\pi+(1-\varepsilon)\pi^{R,\alpha},\pi)&\le\int_\R\mathcal W_1(\varepsilon\pi_x+(1-\varepsilon)\pi^{R,\alpha}_x,\pi_x)\,\mu(dx)\\
	 	&\le(1-\varepsilon)\int_\R\mathcal W_1(\pi^{R,\alpha}_x,\pi_x)\,\mu(dx)\\
	 	&\le\int_\R\mathcal W_1(\pi^{R,\alpha}_x,\pi^R_x)\,\mu(dx)+\int_\R\mathcal W_1(\pi^R_x,\pi_x)\,\mu(dx)\\
	 	&=(1-\alpha)\int_\R\int_\R\vert x-y\vert\,\pi^{R}_x(dy)\,\mu(dx)+\int_\R\mathcal W_1(\pi^R_x,\pi_x)\,\mu(dx)\\
	 	&\le(1-\alpha)\left(\int_\R\vert x\vert\,\mu(dx)+\int_\R\vert y\vert\,\nu^R(dy)\right)+\int_\R\mathcal W_1(\pi^R_x,\pi_x)\,\mu(dx),
	 	\end{align*}
	 	where the right-hand side converges to $\int_\R\mathcal W_1(\pi^R_x,\pi_x)\,\mu(dx)<\varepsilon$ for $\alpha \to 1$.
	 	Note that $\frac{2R-a-\tilde a}{2R-2\tilde a},\frac{b+\tilde b+2R}{2\tilde b+2R}\in(0,1)$, so we can choose $\alpha \in (0,1)$ such that
	 	\begin{equation}\label{eq:choice of alpha}
	 	\mathcal{AW}_1(\varepsilon\pi+(1-\varepsilon)\pi^{R,\alpha},\pi)< \epsilon\quad\text{and}\quad\alpha \ge \frac{2R-a-\tilde a}{2R-2\tilde a} \vee \frac{b+\tilde b+2R}{ 2\tilde b+2R} .
	 	\end{equation}
	 	 Let $L$ be a compact subset of $I$ such that the interior $\mathring L$ of $L$ satisfies
	 	 \[
	 	 [(-R) \vee (\alpha \ell + (1-\alpha) a), R \wedge (\alpha \rho + (1-\alpha) b )] \subset \mathring L.
	 	 \] 
	 	 Because $R \ge (-a)\vee b$ and thereby $[a,b] = K \subset [-R,R]$, we have that $\mu\vert_K\times\pi^R_x$ is concentrated on $K\times([-R,R] \cap \overline{I})$.
	 	 Furthermore, for any $(x,y)\in K \times ([-R,R] \cap \overline{I})$, we find $\alpha y + (1-\alpha) x \in \mathring L$.
	 	 Hence, $\mu \vert_K \times \pi^{R,\alpha}_x$ is concentrated on $K \times \mathring L$.
    
	 	 Denote the second marginal of $\pi^{R,\alpha}$ by $\nu^{R,\alpha}$. Since
	 	 \[
	 	 (x,y) \in (\ell, R) \times [\ell, \tilde a) \implies
	 	 \ell <  (1 - \alpha) x + \alpha y < R - \alpha(R - \tilde a) \leq \frac{a + \tilde a}{2},
	 	 \]
	 	 we have that
	 	 \begin{align*}
	 	 \nu^{R,\alpha}\left(\left(\ell,\frac{a+\tilde a}{2}\right)\right) &=\int_{\R^2}\mathbbm 1_{\left(\ell,\frac{a+\tilde a}{2}\right)}(y)\,\pi^{R,\alpha}(dx,dy) = \int_{\R^2}\mathbbm1_{\left(\ell,\frac{a+\tilde a}{2}\right)}(\alpha y+(1-\alpha)x)\,\pi^R(dx,dy)\\
	 	 &\ge\int_{\R^2}\mathbbm1_{(\ell,R)\times[\ell,\tilde a)}(x,y)\,\pi^R(dx,dy)=\int_{(\ell,R)}\pi^R_x((-\infty,\tilde a))\,\mu(dx).
	 	 \end{align*}   
	 	 If $x \in [R,+\infty)$, then $\pi^R_x = \delta_x$ and since $R\ge\tilde a$, $\pi^R_x((-\infty,\tilde a))=0$. Added to the fact that $\mu$ is concentrated on $I$, we obtain
	 	 \[
	 	 \int_{(\ell,R)}\pi^R_x((-\infty,\tilde a))\,\mu(dx)=\int_\R\pi^R_x((-\infty,\tilde a))\,\mu(dx)=\nu^R((-\infty,\tilde a))=\nu^R([\ell,\tilde a))>0.
	 	 \]
	 	 We deduce that
	 	 \begin{equation}\label{eq:positive mass nuRalpha}
	 	 \nu^{R,\alpha}\left(\left(\ell,\frac{a+\tilde a}{2}\right)\right)>0\text{, and similarly, }\nu^{R,\alpha}\left(\left(\frac{b+\tilde b}{2},\rho\right)\right)>0.
	 	 \end{equation}
    
	 	 To summarise, we have constructed a martingale coupling $\pi^{R,\alpha}\in\Pi_M(\mu,\nu^{R,\alpha})$ close to $\pi$ with respect to the $\mathcal{AW}_1$-distance \textcolor{red}{in view of \eqref{eq:choice of alpha}}, whose restriction $\pi^{R,\alpha}\vert_{K\times\R}$ is compactly supported on $K\times L$ and concentrated on $K\times\mathring L$. Moreover, the second marginal distribution $\nu^{R,\alpha}$ is dominated by $\nu$ in the convex order and assigns positive mass on both sides of $K$ \textcolor{red}{according to \eqref{eq:positive mass nuRalpha}}.

	 	\emph{Step 2.} In the next step we construct a sequence of sub-probability martingale couplings supported on a compact cube $J\times J$ ($K\subset J\subset I$) converging to $\pi^{R,\alpha}|_{K\times \R}$.	 	
	 	
	  Our first goal is to find a sequence $\nu^{R,\alpha,k}$, $k\in\N$, such that $\mu^k \leq_c \nu^{R,\alpha,k} \leq_c \nu^k$ and
	  	\begin{equation}\label{eq:convergence nuRalphak}
	  	\mathcal W_1(\nu^{R,\alpha,k},\nu^{R,\alpha}) \to 0,\quad k \to \infty.
	  	\end{equation}
	  	Defining $\nu^{R,\alpha,k}$ by
	  	\[
	  	\nu^{R,\alpha,k} = \nu^k \wedge_{c} (\mu^k \vee_{c} T_k(\nu^{R,\alpha})),
	  	\]
	  	where $T_k$ denotes the translation by the difference between the common barycentre of $\mu^k$ and $\nu^k$ and the common barycentre of $\nu$ and $\nu^{R,\alpha}$, i.e., $\int_\R y\, \nu^k(dy) - \int_\R y\, \nu^{R,\alpha}(dy)$, fulfils these requirements.
	  	Indeed
	  	\[
	  	\mathcal W_1 ( T_k ( \nu^{R,\alpha} ), \nu^{R,\alpha}) = \left|\int_\R y \,\nu^k(dy) - \int_\R y\, \nu(dy)\right| \le \mathcal W_1(\nu^k,\nu) \to 0,
	  	\]
	  	as $k$ goes to $+\infty$.
	  	Then Lemma \ref{lemcontinfsup} provides $\nu^{R,\alpha,k} \to \nu \wedge_c ( \mu \vee_{c} \nu^{R,\alpha}) = \nu^{R,\alpha}$ in $\mathcal W_1$ as $k$ goes to $+\infty$.
	 	By Proposition \ref{prop:adapted approximation dim1} we can approximate $\pi^{R,\alpha}$ with couplings $\pi^{R,\alpha,k} \in \Pi(\mu^k,\nu^{R,\alpha,k})$ in $\mathcal{AW}_1$.
	 	Unfortunately the sequence $\pi^{R,\alpha,k}$, $k\in\N$ does not have to consist of solely martingale couplings.
	 	Therefore, we have to adjust the barycentres of its disintegrations, $(\pi^{R,\alpha,k}_x)$ to obtain martingale kernels and thereby martingale couplings.
	 	Due to \eqref{eq:positive mass nuRalpha} and inner regularity of $\nu^{R,\alpha}$, we find compact sets
	 	\[
	 	L_- \subset \left( \ell , \frac{a+\tilde a}{2}\right),\quad L_+\subset \left(\frac{b+\tilde b}{2},\rho \right)
	 	\]
	 	with $\nu^{R,\alpha}$-positive measure. Let $\tilde \ell,\tilde \rho\in I$, be such that $\tilde \ell<\inf(L\cup L-)$ and $\sup(L\cup L_+)<\tilde \rho$. Then define
	 	\begin{equation}\label{eq:def Ltilde Ktilde}
	 	\tilde L_-=\left(\tilde \ell,\frac{a+\tilde a}{2}\right),\quad\tilde L_+=\left(\frac{b+\tilde b}{2},\tilde \rho\right)\quad\text{and}\quad\tilde K=\left(\frac{3a+\tilde a}{4},\frac{3b+\tilde b}{4}\right),
	 	\end{equation}
	 	so that $\tilde L_-$, $\tilde L_+$ and $\tilde K$ are bounded and open intervals covering respectively $L_-$, $L_+$ and $K$ and such that the distance $e$ between $\tilde L_-\cup\tilde L_+$ and $\tilde K$ is positive:
	 	\[
	 	e=\inf\left\{\vert x-y\vert\mid(x,y)\in(\tilde L_-\cup\tilde L_+)\times\tilde K\right\}\ge\frac{a-\tilde a}{4}\wedge\frac{\tilde b-b}{4}>0.
	 	\]
	 	Denoting $J=[\tilde \ell,\tilde \rho]$, Figure \ref{fig:bild2} summarises the construction.
	 	\begin{figure}[h]
	 		\centering
	 		\begin{tikzpicture}[decoration = {random steps,segment length=2pt,amplitude=0.1pt}]
	 		\draw[decorate, line width=\x mm, color = {rgb,255:red,92; green,74; blue,114}, (-)] (0,0) -- (10,0) ;

	 		\draw[decorate, line width=\x mm, color = {rgb,255:red,163; green,88; blue,109}, |-| ] (0.5,0.2) -- (9.5,0.2) ;
	 		
	 		\draw[decorate, line width=\x mm, color = {rgb,255:red,244; green,135; blue,75}, |-|] (1,-0.2) -- (2,-0.2) ;
	 		\draw[decorate, line width=\x mm, color = {rgb,255:red,244; green,135; blue,75}, |-| ] (8,-0.2) -- (9,-0.2) ;
	 		\draw[decorate, line width=\x mm, color = {rgb,255:red,194; green,85; blue,25}, |-| ] (4,-0.2) -- (7,-0.2) ;
	 		
	 		\draw (0,0.1) node[below left, scale = 1.2]{$\ell$} ;
	 		\draw (10,0) node[below right, scale = 1.2]{$\rho$} ;
	 		\draw (1.5,-0.2) node[below, scale = 1.2]{$L_-$} ;
	 		\draw (8.5,-0.2) node[below, scale = 1.2]{$L_+$} ;
	 		\draw (5.5,-0.2) node[below, scale = 1.2]{$K$} ;
	 		\draw (5,0.2) node[above, scale = 1.2]{$J$} ;
	 		\draw (0.625,0.2) node[above left, scale = 1.2]{$\tilde \ell$} ;
	 		\draw (9.375,0.2) node[above right, scale = 1.2]{$\tilde \rho$} ;
	 		\draw (4.125,-0.2) node[below left, scale = 1.2]{$a$} ;
	 		\draw (6.875,-0.1) node[below right, scale = 1.2]{$b$} ;
	 		\draw (1.5, 0.2) node[above, scale = 1.2]{$\tilde a$} ;
	 		\draw[decorate, line width=\x mm, |-| ] (1.5,0) -- (1.5001,0) ;
	 		\draw (8.5, 0.2) node[above, scale = 1.2]{$\tilde b$} ;
	 		\draw[decorate, line width=\x mm, |-| ] (8.5,0) -- (8.5001,0) ;
	 		
	 		\draw[decorate, line width=\x mm, color = {rgb,255:red,243; green,176; blue,90}, (-)] (0.5,-0.8) -- (2.75,-0.8) ;
	 		\draw (1.625,-0.8) node[below, scale = 1.2]{$\tilde L_-$} ;
	 		\draw[decorate, line width=\x mm, color = {rgb,255:red,243; green,176; blue,90}, (-)] (7.75,-0.8) -- (9.5,-0.8) ;
	 		\draw (8.625,-0.8) node[below, scale = 1.2]{$\tilde L_+$} ;
	 		\draw[decorate, line width=\x mm, color = {rgb,255:red,193; green,126; blue,40}, (-) ] (3.375,-0.8) -- (7.375,-0.8) ;
	 		\draw (5.375,-0.8) node[below, scale = 1.2]{$\tilde K$} ;
	 		
	 		\draw[decoration={brace,mirror,raise=5pt, amplitude=3pt}, decorate]
	 		(7.375,-0.8) -- node[below=6pt,scale=1.2] {$e$} (7.75,-0.8);
	 		\end{tikzpicture}
	 		\caption{Points and intervals involved in the proof. The boundaries of the closed intervals are vertical bars and those of the open intervals are parentheses.\label{fig:bild2}}
	 	\end{figure}
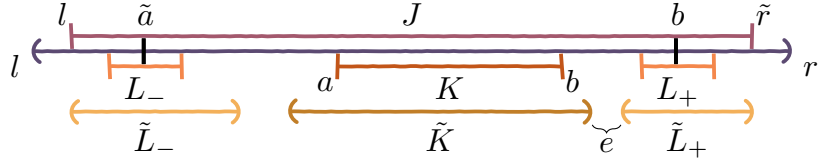

	 	The respective restrictions of $\nu^{R,\alpha,k}$ to $\tilde L_-$ and $\tilde L_+$ are denoted by $\nu_-^{k}$ and $\nu_+^{k}$, respectively.
	 	Since $\tilde L_-$ and $\tilde L_+$ are open, Portmanteau's theorem ensures that eventually (for $k$ sufficiently large) $\nu_-^{k}$ and $\nu_+^{k}$ each have more total mass than some constant $\delta > 0$.

	 	By Lemma \ref{lem:subprob convergence} \ref{it:subprog convergence2} there are
	 	$\hat \mu^k \leq \mu^k$, $\hat \nu^k \leq \nu^{R,\alpha,k}$, $\hat \pi^{k}=\hat\mu^k\times\hat\pi^k_x \in \Pi(\hat \mu^k, \hat \nu^k)$ concentrated on $\tilde K \times \mathring L$, and $\epsilon_k \ge 0$ such that
	 	\begin{equation}\label{eq:approx hatpi}
	 	\mathcal{AW}_1( \hat\pi^{k}, (1-\epsilon_k) \pi^{R,\alpha} \vert_{K\times\R} )+\epsilon_k \to 0,\quad k\to+\infty.
	 	\end{equation}
	 	The following procedure shows that there are for $\hat\mu^k(dx)$-almost every $x$ unique constants $c^k_-(x), c^k_+(x) \in [0,+\infty)$ and $d^k(x) \in [1,+\infty)$ such that
	 	\[
	 	\tilde \pi_x^{k} := \frac{\hat \pi^k_x + c_+^k(x) \nu_+^{k} + c_-^k(x) \nu_-^{k}}{d^k(x)} \in \mathcal P(\R),\quad \int_\R y\, \tilde \pi^k_x(dy) = x,\quad c_-^k(x) \wedge c_+^k(x) = 0.
	 	\]
	 	Note that the constraint $c_-^k(x) \wedge c_+^k(x) = 0$ provides
	 	\begin{align} \label{eq:step2.1}
	 	\int_\R y\, \hat \pi^k_x(dy)\leq x \implies c_-^k(x) = 0, \quad \int_\R y\,\hat \pi^k_x(dy) \geq x \implies c_+^k(x) = 0.
	 	\end{align}
	 	We require $\tilde \pi^k_x$ to be a probability measure with mean $x$, thus,
	 	\begin{gather} \label{eq:step2.2}
	 	1 + c_+^k(x) \nu^k_+(\R) + c_-^k(x) \nu^k_-(\R) = d^k(x), \\ \label{eq:step2.3}
	 	\int_\R y \, \hat \pi^k_x(dy) + c_+^k(x) \int_\R y \, \nu^k_+(dy) + c_-^k(x) \int_\R y \, \nu_-^k(dy) = x d^k(x).
	 	\end{gather}
	 	Combining \eqref{eq:step2.1} with \eqref{eq:step2.2} and \eqref{eq:step2.3} yields
	 	\begin{gather*}
	 	c_-^k(x) = \frac{\int_\R y \, \hat \pi^k_x(dy)-x}{ \int_\R (x - y) \, \nu^k_-(dy)} \vee 0 \in \left[ 0, \frac{\left\vert x-\int_\R y \, \hat \pi^k_x(dy)\right\vert}{ e \nu_-^k(\R)} \right], \\
	 	c_+^k(x) = \frac{x-\int_\R y \, \hat \pi^k_x(dy)}{\int_\R (y - x) \, \nu_+^k(dy)} \vee 0 \in \left[ 0,  \frac{\left\vert x-\int_\R y \, \hat \pi^k_x(dy)\right\vert}{e \nu_+^k(\R)}\right], \\
	 	d^k(x) = 1 + c_-^k(x) \nu_-^k(\R) + c_+^k(x) \nu_+^k(\R) \in \left[ 1, 1 + \frac{\left\vert x-\int_\R y \, \hat\pi^k_x(dy)\right\vert}{e}\right].
	 \end{gather*}
	 	Remember from \eqref{eq:def Ltilde Ktilde} that $L\cup\tilde L_-\cup\tilde L_+\subset[\tilde \ell,\tilde \rho]\subset I$. Then we obtain for $\hat \mu^k(dx)$-almost every $x$ the estimate
	 	\begin{align*}
	 	\mathcal W_1(\tilde \pi^k_x, \hat\pi^k_x) &\leq \mathcal W_1\left( \frac{c^k_+(x)\nu^k_++c_-^k(x) \nu^k_-}{d^k(x)}, \frac{d^k(x) - 1 }{ d^k(x) } \hat \pi^k_x \right)\leq \frac{d^k(x) - 1}{d^k(x)} |\tilde \rho - \tilde \ell|\\
	 	&\leq \frac{\left\vert x-\int_\R y \, \hat\pi^k_x(dy)\right\vert}{e}\vert\tilde \rho - \tilde \ell\vert.
	 	\end{align*}
	 	Hence, the adapted Wasserstein distance between $\hat \pi^k$ and $\tilde \pi^k = \hat \mu^k \times \tilde \pi^k_x$ satisfies
	 	\begin{align*}
	 	\mathcal{AW}_1(\tilde \pi^k,\hat \pi^k) &\leq \int_\R \mathcal W_1(\tilde \pi^k_x,\hat \pi^k_x) \, \hat \mu^k(dx)\leq \frac{|\tilde \rho - \tilde \ell|}{ e }\int_\R\left\vert x-\int_\R y \, \hat\pi^k_x(dy)\right\vert\,\hat\mu^k(dx)\\
	 	&\leq \frac{|\tilde \rho - \tilde \ell|}{ e } \mathcal{AW}_1( \hat \pi^k, (1 - \epsilon_k) \pi^{R,\alpha}\vert_{K \times \R}),
	 	\end{align*}
	 	where we used Remark \ref{rk: almost martingale} with exponent $r=1=2^{r-1}$ in the last inequality. The triangle inequality and \eqref{eq:approx hatpi} then yield
	 	\begin{equation}\label{eq:convergence piTilde}
	 	\lim_k \mathcal{AW}_1(\tilde \pi^k,(1 - \epsilon_k) \pi^{R,\alpha}\vert_{K \times \R}) \to 0,\quad k\to \infty.
	 	\end{equation}
	 	Next we bound the total mass which we require to fix the barycentres.
	 	We find that
	 	\begin{align*}
	 	\int_\R \frac{c_-^k(x) + c_+^k(x)}{d^k(x)} \, \hat \mu^k(dx) &\leq \frac{1}{e (\nu_-^k(\R)\wedge \nu_+^k(\R))}\int_\R\left\vert x-\int_\R y \, \hat\pi^k_x(dy)\right\vert\,\hat\mu^k(dx)\\
	 	&\leq \frac{\mathcal{AW}_1(\hat \pi^k, (1 - \epsilon_k) \pi^{R,\alpha}\vert_{K \times \R})}{e (\nu_-^k(\R)\wedge \nu_+^k(\R))} \to 0,\quad k\to+\infty,
	 	\end{align*}
	 	where we used Remark \ref{rk: almost martingale} again for the last inequality and the fact that $\nu_-^k(\R)\wedge \nu_+^k(\R)\ge\delta$ for $k$ large enough for the limit.
	 	Consequently, when $\tilde \nu^k$ denotes the second marginal of $\tilde \pi^k$, we have for $k$ sufficiently large that
	 	\[ (1 - 2 \epsilon) \tilde \nu^k \leq (1 - 2 \epsilon)\hat \nu^k + (1 - 2 \epsilon)(\nu^k_- + \nu^k_+) \int_\R \frac{c_-^k(x) + c_+^k(x)}{d^k(x)} \, \hat \mu^k(dx) \leq (1 - \epsilon) \nu^{R,\alpha,k}. \]

	 	\emph{Step 3.}
	 	In this step, we complement the martingale coupling $(1 - 2 \epsilon) \tilde \pi^k$ to a martingale coupling with marginals $\mu^k$ and $\epsilon \nu^k + (1-\epsilon)\nu^{R,\alpha,k}$ for $k$ sufficiently large.
	 	Recall that $\tilde \pi^k \in \Pi_M(\hat \mu^k, \tilde \nu^k)$ and $\pi^{R,\alpha}\vert_{K \times \R} \in \Pi_M(\mu\vert_K,\check \nu^{R,\alpha})$, where $\check\nu^{R,\alpha}$ is the second marginal distribution of $\pi^{R,\alpha}\vert_{K\times\R}$, are concentrated on the compact cube $J \times J$ and
	 	\[ \mathcal{AW}_1(\tilde \pi^k, (1 - \epsilon_k) \pi^{R,\alpha}\vert_{K \times \R}) \to 0,\quad k\to+\infty. \]
	 	Furthermore, since $(1-\epsilon) \pi^{R,\alpha} - ( 1 - 2\epsilon) \pi^{R,\alpha} \vert_{K\times\R}$ is a martingale coupling with marginals
	 	\[ (1-\epsilon)\mu - (1 - 2 \epsilon)\mu\vert_K \quad \text{and} \quad(1 - \epsilon) \nu^{R,\alpha} - (1 - 2 \epsilon)\check \nu^{R,\alpha},\]
	 	we deduce by irreducibility of the pair $(\mu,\nu)$ on $I$ irreducibility of the pair of sub-probability measures
	 	\[
	 	\varepsilon \mu + (1-\varepsilon)\mu - (1-2\varepsilon) \mu\vert_K \quad \text{and} \quad  \varepsilon \nu + (1-\varepsilon) \nu^{R,\alpha} - (1-2\varepsilon) \check \nu^{R,\alpha},
	 	\]
	 	whose potential functions satisfy
	 	\begin{align*}
	 	0 \leq u_\mu - u_{(1 - 2\varepsilon) \mu\vert_K} < u_{\varepsilon \nu + (1-\varepsilon) \nu^{R,\alpha}} - u_{(1-2\varepsilon) \check \nu^{R,\alpha}} \quad \text{on }I.
	 	\end{align*}
	 	Since those potential functions are continuous, there exists $\tau> 0$ such that they have distance greater $\tau$ on $J$. By uniform convergence of potential functions, for $k\in\N$ sufficiently large we have
	 	\begin{align*}
	 	0 \leq u_{\mu^k} - u_{(1-2\varepsilon) \hat \mu^k}  + \frac{\tau}{2} \leq u_{\varepsilon \nu^k + (1-\varepsilon) \nu^{R,\alpha,k}} - u_{(1-2\epsilon) \tilde \nu^{k}}\quad\text{on }J.
	 	\end{align*}
	 	On the complement of $J$ we have $u_{(1-2\varepsilon) \hat\mu^k} = u_{(1-2\varepsilon) \tilde \nu^{k}}$ since both measures are concentrated on $J$ and satisfy $(1-2\varepsilon)\int_\R x\, \hat\mu^k(dx)=(1-2\varepsilon)\int_\R y\,\tilde \nu^{k}(dy)$.
	 	Therefore,
	 	\begin{align*}
	 	0 \leq u_{\mu^k} - u_{(1-2\varepsilon)\hat \mu^k} \leq u_{\varepsilon \nu^k + (1-\varepsilon) \nu^{R,\alpha,k}} - u_{(1-2\varepsilon) \tilde \nu^{k}}\quad\text{on }J^c.
	 	\end{align*}
	 	By Strassen's theorem \cite{St65}, there exists $\eta^k\in \Pi_M(\mu^k - (1-2\varepsilon) \hat \mu^k,\varepsilon \nu^k + (1 - \varepsilon) \nu^{R,\alpha,k} - (1-2\varepsilon) \tilde \nu^{k})$.
	 	Finally, we write
	 	\[
	 	\overline\pi^k = \eta^k + (1-2\varepsilon) \tilde \pi^{k} \in \Pi_M(\mu^k,\varepsilon \nu^k + (1-\varepsilon) \nu^{R,\alpha,k}).
	 	\]

	 	\emph{Step 4.} In the last step, we show that the sequence constructed in this way is eventually close to the original martingale coupling $\pi$ in adapted Wasserstein distance.

	 	The marginals of $\overline\pi^k$ are converging in $\mathcal W_1$ to $(\mu,\epsilon \nu + (1 - \epsilon) \nu^{R,\alpha})$ as $k$ goes to $+\infty$.
	 	\textcolor{blue}{We have according to \eqref{eq:convergence piTilde} that
	 	\[ \mathcal{AW}_1\left((1-2\varepsilon)\frac{1-\varepsilon}{1-\varepsilon_k}\tilde\pi^k,(1-2\varepsilon)(1-\varepsilon)\pi^{R,\alpha}\vert_{K \times \R}\right) \to 0,\quad k \to \infty.\]
                For $k$ large enough so that $\varepsilon_k\le\varepsilon$,
 \begin{align*}
   \bar\pi^k(\R^2)&-(1-2\varepsilon)\frac{1-\varepsilon}{1-\varepsilon_k}\tilde\pi^k(\R^2)= \eta^k(\R^2)+(1-2\varepsilon)\frac{\varepsilon-\varepsilon_k}{1-\varepsilon_k}\tilde\pi^k(\R^2)\\&=\left(\varepsilon\pi+(1-\varepsilon)\pi^{R,\alpha}-(1-2\varepsilon)(1-\varepsilon_k)\pi^{R,\alpha}\vert_{K\times\R}\right)(\R^2)+(1-2\varepsilon)(\varepsilon-\varepsilon_k)\pi^{R,\alpha}\vert_{K\times\R}(\R^2)\\&=1-(1-2\varepsilon)(1-\varepsilon)\mu(K)\le 4\varepsilon,
 \end{align*}
 where we used $\mu(K) \ge 1-\varepsilon$ for the last inequality.
Hence applying Lemma \ref{lem:inequality addition AWr}
\ref{it:addition AWr limsup}, with $(\hat\pi^k,\hat\pi,\tilde\pi^k,\tilde\pi,\varepsilon)$ replaced by
\begin{multline*}
	\left((1-2\varepsilon)\frac{1-\varepsilon}{1-\varepsilon_k}\tilde\pi^k,(1-2\varepsilon)(1-\varepsilon)\pi^{R,\alpha}\vert_{K \times \R},\right. \\ 
	\left. \eta^k+(1-2\varepsilon)\frac{\varepsilon-\varepsilon_k}{1-\varepsilon_k}\tilde\pi^k,\varepsilon\pi+(1-\varepsilon)\left(\pi^{R,\alpha}-(1-2\varepsilon)\pi^{R,\alpha}\vert_{K \times \R}\right), 4\varepsilon\right),
\end{multline*}
we obtain\[
	 	\limsup_k\mathcal{AW}_1(\overline\pi^k,\varepsilon\pi+(1-\varepsilon)\pi^{R,\alpha})\le C(I_{4\varepsilon}(\mu)+I_{4\varepsilon}(\varepsilon\nu+(1-\varepsilon)\nu^{R,\alpha})),
	 	\]
	 	with $C$ given by Lemma \ref{lem:inequality addition AWr} \ref{it:addition AWr limsup} and depending only on the exponent $r = 1$.} Since $\nu^{R,\alpha}\le_c\nu$, then $\varepsilon\nu+(1-\varepsilon)\nu^{R,\alpha}\le_c\nu$, so using Lemma \ref{lem:uniform integrability} \ref{it:IepsilonrCvx}, the triangle inequality and \eqref{eq:choice of alpha}, we get
	 	\begin{align*}
	 	\limsup_k\mathcal{AW}_1(\overline\pi^k,\pi)&\le\limsup_k\left(\mathcal{AW}_1(\overline\pi^k,\varepsilon\pi+(1-\varepsilon)\pi^{R,\alpha})+\mathcal{AW}_1(\varepsilon\pi+(1-\varepsilon)\pi^{R,\alpha},\pi)\right)\\
	 	&\le C(I_{4\varepsilon}(\mu)+I_{4\varepsilon}(\nu))+\varepsilon.
	 	\end{align*}
	 	Since the right-hand side only depends on $\epsilon$ and vanishes as $\epsilon$ goes to $0$, we can reason like in the proof of Proposition \ref{prop:adapted approximation} (from \eqref{eq:limsup Awr pikepsilon}) to find a null sequence $(\tilde \epsilon_k)_{k \in \N}$, two sequences $(R_k)_{k\in\N}$, $(\alpha_k)_{k\in\N}$ with values respectively in $\R_+^*$ and $(0,1)$, and martingale couplings
	 	\[
	 	\mathring\pi^k\in \Pi_M(\mu^k, \tilde \epsilon_k \nu^k + (1-\tilde \epsilon_k) \nu^{R_k,\alpha_k,k}),\quad k\in\N
	 	\]
	 	such that
	 	\begin{equation}\label{eq:convergence pik}
	 	\mathcal{AW}_1(\mathring\pi^k,\pi)\to0,\quad k\to+\infty.
	 	\end{equation}
	 	In particular, the $\mathcal W_1$-distance of their second marginal distributions vanishes as $k$ goes to $+\infty$, hence the triangle inequality yields
	 	\[
	 	\mathcal W_1(\tilde \epsilon_k \nu^k + (1-\tilde \epsilon_k) \nu^{R_k,\alpha_k,k},\nu^k)\le\mathcal W_1(\tilde \epsilon_k \nu^k + (1-\tilde \epsilon_k) \nu^{R_k,\alpha_k,k},\nu)+\mathcal W_1(\nu,\nu^k)\to0,\quad k\to+\infty.
	 	\]
	 	Remember that $\nu^{R_k,\alpha_k,k}\le_c\nu^k$, hence $\tilde \epsilon_k \nu^k + (1-\tilde \epsilon_k) \nu^{R_k,\alpha_k,k}\le_c\nu^k$. Then by \cite[Theorem 2.12]{JoMa18}, there exist martingale couplings $M^k \in \Pi_M(\tilde \epsilon_k \nu^k + (1 - \tilde \epsilon_k)\nu^{R_k,\alpha_k,k},\nu^k)$, $k\in\N$ such that
	 	\begin{equation}\label{eq:convergence Mk}
	 	\int_{\R\times\R} |x-y|\,M^k(dx,dy) \leq 2\mathcal{W}_1(\tilde \epsilon_k \nu^k + (1 - \tilde \epsilon_k)\nu^{R_k,\alpha_k,k},\nu^k)\to0,\quad k\to+\infty.
	 	\end{equation}
	 	\textcolor{red}{Let then 
	 	\[
	 	\pi^k(dx,dy)=\mu^k(dx)\int_{z\in\R}M^k_z(dy)\,\mathring\pi^k_x(dz)\in\Pi_M(\mu^k,\nu^k).
	 	\]}
	 	Using the fact that for $\mu^k(dx)$-almost every $x$, $\mathring\pi^k_x(dz)\,M^k_z(dy)\in\Pi(\mathring\pi^k_x,\pi^k_x)$, we get
	 	\begin{align*}
	 	\mathcal{AW}_1(\pi^k,\mathring\pi^k)&\le\int_\R\mathcal W_1(\pi^k_x,\mathring\pi^k_x)\,\mu^k(dx)\le\int_{\R\times\R\times\R}\vert z-y\vert\,\mu^k(dx)\,M^k_z(dy)\,\,\mathring\pi^k_x(dz)\\
	 	&=\int_{\R\times\R}\vert z-y\vert\,M^k(dy,dz),
	 	\end{align*}
	 	where the right-hand side vanishes by \eqref{eq:convergence Mk} as $k$ goes to $+\infty$. Then \eqref{eq:convergence pik} and the triangle inequality yield
	 	\[
	 	\mathcal{AW}_1(\pi^k,\pi)\le\mathcal{AW}_1(\pi^k,\mathring\pi^k)+\mathcal{AW}_1(\mathring\pi^k,\pi)\to0,\quad k\to+\infty,
	 	\]
	 	which concludes the proof.
	 \end{proof}
 
 \bibliography{biblio1}
 \bibliographystyle{abbrv}
 \end{document}